\newcommand{\norm}[1]{\left\lVert#1\right\rVert}
\numberwithin{equation}{section}
\theoremstyle{definition}
\newtheorem{theorem}{Theorem}
\newtheorem{lemma}{Lemma}
\newtheorem{remark}{Remark}
\newtheorem{example}{Example}
\newcommand{\vertiii}[1]{{\left\vert\kern-0.25ex\left\vert\kern-0.25ex\left\vert #1 
		\right\vert\kern-0.25ex\right\vert\kern-0.25ex\right\vert}}
\begin{document}
	
	\title{$L^\infty$ norm error estimates for  HDG methods applied to the  Poisson equation with an application to the Dirichlet boundary control problem}

\author{Gang Chen%
	\thanks{ College of  Mathematics, Sichuan University, Chengdu 610064, China (\mbox{cglwdm@uestc.edu.cn}).}
	\and
	Peter Monk%
	\thanks{Department of Mathematics Science, University of Delaware, Newark, DE, USA (\mbox{monk@udel.edu}).}
	\and
	Yangwen Zhang%
	\thanks{Department of Mathematics Science, University of Delaware, Newark, DE, USA (\mbox{ywzhangf@udel.edu}).}
}

\date{\today}

\maketitle

\begin{abstract}
 We prove quasi-optimal $L^\infty$ norm error estimates (up to logarithmic factors) for the solution of Poisson's problem by the standard Hybridizable Discontinuous Galerkin (HDG) method.  Although such estimates are available for conforming and mixed finite element methods, this is the first proof for HDG.  The method of proof is motivated by known $L^\infty$ norm estimates for mixed finite elements.  We show two applications: the first is to prove optimal convergence rates for boundary flux estimates, and the second is to prove that numerically observed convergence rates for the solution of a Dirichlet boundary control problem are to be expected theoretically.  Numerical examples show that 
the predicted rates are seen in practice.
\end{abstract}

\section{Introduction}
In this paper we derive $L^\infty$ norm estimates for the standard hybridizable discontinuous Galerkin (HDG) method applied to  a diffusion problem.  The problem is posed on a bounded convex polyhedral domain $\Omega \subset \mathbb R^2$.  We assume the data is given as follows: the
diffusivity $c\in (W^{1,\infty}(\Omega))^{2\times 2}$ is a uniformly bounded positive definite symmetric matrix-valued function, and  the   functions $f\in L^2(\Omega)$ and $g\in H^{1/2}(\partial\Omega)$.  Then we seek to approximate the solution $(u,\bm q)$ of the following elliptic system:
\begin{subequations}\label{Poisson}
	\begin{align}
	c\bm q + \nabla u &= 0  \quad  \text{in} \; \Omega \label{p1},\\
	\nabla \cdot \bm q  &=  f \quad  \text{in} \; \Omega \label{p2},\\
	u &= g \quad  \text{on}\;  \partial\Omega. \label{p3}
	\end{align}
\end{subequations}
In particular, we shall prove quasi-optimal $L^{\infty}$ error estimates (up to logarithmic factors) for the HDG approximation to $u$ and $\bm q$.  We also verify that, after a standard procedure, the post-processed solution denoted $u_h^\star$ is super-convergent in the $L^\infty$ norm.

Quasi-optimal $ L^\infty $ norm estimates on general quasi-uniform meshes for the conforming finite element method were first proved by Scott \cite{Scott_Math_Comp_1976} in 1976. The method of proof is based on weighted $L^2$ norms and  was extended in \cite{Scholz_M2AN_1977,Gastaldi_NM_1987,Duran_M2AN_1988,Duran_M2AN_1988_nonlinear,Gastaldi_M2AN_1989,wang1989asymptotic} to mixed methods for elliptic equations and in \cite{Scott_JMPA_2005,Scott_NM_2015} to the  Stokes equations. Another technique was developed in the series of papers by Schatz and Wahlbin \cite{Schatz_Math_Comp_1977,Schatz_Math_Comp_1982,Schatz_Math_Comp_1995}. They use dyadic decomposition of the domain and require local energy estimates together with sharp pointwise estimates for the corresponding components of the Green’s matrix. For smooth domains such a technique was successfully used in \cite{Chen_SINUM_2006} for mixed methods,  in \cite{Guzman_Math_Comp_2008} for discontinuous Galerkin (DG) methods and in \cite{Chen_Math_Comp_2005} for  local DG methods. This technique was also applied on a non-smooth domain for the  Stokes equations, see Guzm\'{a}n  and Leykekhman \cite{Guzman_Math_Comp_2012}.

The HDG method for elliptic equations was  devised by Cockburn et al.\ \cite{Cockburn_Gopalakrishnan_Lazarov_Unify_SINUM_2009} and was analyzed using a special projection in \cite{Cockburn_Gopalakrishnan_Sayas_Porjection_MathComp_2010}. Since there is a strong relation between the HDG method and  mixed finite element methods (see \cite{Cockburn_Gopalakrishnan_Lazarov_Unify_SINUM_2009}), it is reasonable to ask if similar $L^\infty (\Omega)$ norm estimates on general quasi-uniform meshes  could be obtained for the HDG method. To the best of our knowledge, there is no such result in the literature. In \Cref{Linftyestimates}, we give  quasi-optimal $L^\infty$ norm estimates for the flux variable $\bm q$ and scalar variable $u$ (see \Cref{main_result_Linfty_norm}). One advantage of the HDG method is that we can obtain superconvergent rates of convergence for a post processed approximation $u_h^\star$  to $u$ in the $L^2(\Omega)$ norm \cite{Cockburn_Gopalakrishnan_Lazarov_Unify_SINUM_2009}.  In \Cref{main_result_Linfty_norm}, we show that the postprocessed solution also enjoys  superconvergent rates in the $L^\infty$ norm. We present a numerical test in \Cref{example_1} (see \Cref{table_1}) to confirm our theoretical results from \Cref{main_result_Linfty_norm}. As mentioned in \cite{Melenk_IMA_2014}, we can use our $L^\infty$ norm estimates to improve $L^2(\Gamma)$ norm estimates on an interface $\Gamma$, see \Cref{L2_flux_boundary}. The numerical test in \Cref{example_1} (see \Cref{table_2}) confirms the theoretical  result from \Cref{L2_flux_boundary}. It is worthwhile to mention that a standard analysis of convergence on an interface (usually via the trace theorem) only gives a suboptimal convergence rate. 

The optimal $L^2(\Gamma)$ norm estimates on an interface $\Gamma$ or on the boundary of the domain have many applications. One example \cite{Melenk_IMA_2014} is where  some complex problems require the use of a variety
of models in different parts of the computational domain, which in turn are coupled through the normal flux across common interfaces. On the level of numerical methods, this entails a need to understand
and quantify the discretization error in the normal flux at interfaces \cite{Melenk_IMA_2014}. Another example appears in the problem of Dirichlet boundary control (DBC)  of PDEs with $L^2(\partial\Omega)$-regularization, where the
normal derivative naturally arises in the discrete optimality system. Hence, the estimation of
the error in the normal derivative plays an essential role in the error analysis of the DBC of PDEs, see \cite{Horger_CVS_2013,Melenk_SINUM_2012,Apel_MCRF_2018,Pfefferer_SINUM_2019,Winkler_NM_2020} for more details. In recent papers where HDG methods have been sucessful applied to the DBC of PDEs (\cite{MR3992054,HuShenSinglerZhangZheng_HDG_Dirichlet_control1,HuMateosSinglerZhangZhang1,HuMateosSinglerZhangZhang2,GongHuMateosSinglerZhang1}), the analysis for the control is optimal in the sense of regularity and suboptimal for other variables. Furthermore, numerical experiments show that the discrete control can achieve optimal convergence with respect to the polynomial degree if the control is smooth enough. However, the analysis in the above mentioned HDG papers is suboptimal in this situation. In \Cref{Dirichlet_Boundary_Control_Problem}, we use the improved $L^2$ norm estimates on the boundary in \Cref{L2_flux_boundary}  to obtain an optimal convergence rate for both the control and the other variables, see \Cref{main_result_optimal_control}. The numerical test in \Cref{example_2} confirms our theoretical result.

\section{HDG formulation and preliminary material}
In this section, we shall give the HDG formulation of \Cref{Poisson}, and introduce some {standard} auxiliary projections. Our main result in this section is to extend the $L^2$ norm estimates for the  auxiliary projections used in the error analysis of HDG to $L^p$ norms ($1\le p\le \infty$), see \Cref{general_lp_projection_error}. This is one essential step of the
paper. Although our final $L^\infty$ norm estimates require  the domain to be two dimensional and convex, it is worth mentioning that we do not need these restrictions in \Cref{general_lp_projection_error}. Hence, in the present  section, we assume that $\Omega\subset \mathbb R^d$, ($d=2,3$), {and do not assume convexity}.

Throughout the paper we adopt the standard notation $W^{m,p}(D)$ for Sobolev spaces on a bounded domain $D\subset \mathbb R^d$  ($d=2,3$) with norm $\|\cdot\|_{W^{m,p}(D)}$ and seminorm $|\cdot|_{W^{m,p}(D)}$:
\begin{align*}
\|u\|_{W^{m,p}(D)}^p &= \sum_{|i|\le m}\int_{D} |D^i u|^p {\rm d} \bm x,\\
|u|_{W^{m,p}(D)}^p &= \sum_{|i|= m}\int_{D} |D^i u|^p {\rm d} \bm x,
\end{align*}
where $i$ is a multi-index and $D^i$ is the corresponding partial differential operator of order $|i|$. We denote $W^{m,2}(D)$ by $H^{m}(D)$ with norm $\|\cdot\|_{H^m(D)}$ and seminorm $|\cdot|_{H^m(D)}$. Specifically, $H_0^1(D)=\{v\in H^1(D):v=0 \;\mbox{on}\; \partial D\}$.  We denote the $L^2$-inner products on $L^2(D)$ and $L^2(S)$ by
\begin{align*}
(v,w)_{D} &= \int_{D} vw  \quad \forall v,w\in L^2(D),\\
\left\langle v,w\right\rangle_{S} &= \int_{S} vw  \quad\forall v,w\in L^2(S),
\end{align*}
where $S\subset {\partial D}$. Finally, we define the space $\bm H(\text{div},\Omega) $ as 
\begin{align*}
\bm H(\text{div},\Omega) = \{\bm{v}\in [L^2(\Omega)]^d\,:\, \nabla\cdot \bm{v}\in L^2(\Omega)\}.
\end{align*}

Let $\mathcal{T}_h$ be a collection of disjoint simplices that partition $\Omega$ and satisfy the usual finite element conditions.  We denote by $\partial \mathcal{T}_h$ the set $\{\partial K: K\in \mathcal{T}_h\}$. For an element $K$ of the mesh  $\mathcal{T}_h$, let $F = \partial K \cap \partial\Omega$ denotes the boundary face of $ K $ having non-zero $d-1$ dimensional Lebesgue measure. Let $\mathcal F_h^\partial$ be the set of boundary faces and  $\mathcal F_h$ denote the set of all faces. We define the following mesh dependent norms  and spaces by
\begin{gather*}
(w,v)_{\mathcal{T}_h} = \sum_{K\in\mathcal{T}_h} (w,v)_K,   \quad\quad\quad\quad\left\langle \zeta,\rho\right\rangle_{\partial\mathcal{T}_h} = \sum_{K\in\mathcal{T}_h} \left\langle \zeta,\rho\right\rangle_{\partial K},\\
H^1(\mathcal T_h) = \prod_{K\in \mathcal T_h} H^1(K), \quad\quad\quad\quad  L^2 (\partial\mathcal T_h) =  \prod_{K\in \mathcal T_h} L^2(\partial K).
\end{gather*}

Let $\mathcal{P}^k(D)$ denote the set of polynomials of degree at most $k$ on a domain $D$.  We introduce the discontinuous finite element spaces used in the HDG method as follows:
\begin{align*}
\bm{V}_h  &:= \{\bm{v}_h\in [L^2(\Omega)]^d: \bm{v}_h|_{K}\in [\mathcal{P}^k(K)]^d, \forall K\in \mathcal{T}_h\},\\
{W}_h  &:= \{{w}_h\in L^2(\Omega): {w}_h|_{K}\in \mathcal{P}^{k}(K), \forall K\in \mathcal{T}_h\},\\
\widehat W_h  &:= \{\widehat w_h \in L^2({\mathcal F_h}): \widehat w_h|_{F}\in \mathcal{P}^k(F), \forall F\in \mathcal F_h; \widehat w_h|_F=0, \forall F\in \mathcal F_h^\partial \}.
\end{align*}

\subsection{HDG formulation}
To simplify the presentation, we assume the Dirichlet boundary condition is homogeneous, i.e., $g=0$. Then the HDG method of Cockburn et al.\ \cite{Cockburn_Gopalakrishnan_Lazarov_Unify_SINUM_2009} seeks the flux ${\bm{q}}_h \in \bm{V}_h $, the scalar variable $ u_h  \in W_h $ and  its numerical trace $ \widehat{u}_h\in \widehat W_h$ satisfying
\begin{subequations}\label{HDG_discrete2}
	\begin{align}
	(c\bm{q}_h, \bm{v}_h)_{{\mathcal{T}_h}}- (u_h, \nabla\cdot \bm{v}_h)_{{\mathcal{T}_h}}+\langle \widehat u_h, \bm{v}_h\cdot \bm{n} \rangle_{\partial{{\mathcal{T}_h}}} &=0, \label{HDG_discrete2_a}\\
	-(\bm{q}_h, \nabla w_h)_{{\mathcal{T}_h}}
	+\langle\widehat{\bm{q}}_h\cdot\bm{n}, w_h \rangle_{\partial{{\mathcal{T}_h}}}&=(f, w_h)_{{\mathcal{T}_h}}, \label{HDG_discrete2_b}\\
	\langle\widehat{\bm{q}}_h\cdot \bm{n}, \widehat w_h \rangle_{\partial\mathcal{T}_{h}}&=0, \label{HDG_discrete2_e}
	\end{align}
	for all $(\bm{v}_h,w_h,\widehat w_h)\in \bm{V}_h\times W_h\times \widehat W_h$.
	The numerical traces on $\partial\mathcal{T}_h$ are defined by \cite{Cockburn_Gopalakrishnan_Sayas_Porjection_MathComp_2010}
	\begin{align}
	\widehat{\bm{q}}_h\cdot \bm n =\bm q_h\cdot\bm n+\tau (u_h-\widehat u_h)   \quad \mbox{on} \; \partial \mathcal{T}_h, \label{HDG_discrete2_h}
	\end{align}
\end{subequations}
where the stabilization parameter $\tau\in L^\infty({\cal F}_h)$ is uniformly positive and bounded. For simplicity, we consider the stabilization function $\tau $ to be constant on the boundary of each element. 

After computing the solution $(\bm q_h, u_h, \widehat u_h) $ of \eqref{HDG_discrete2}, we can use the following element-by-element postprocessing to find $u_{h}^\star|_K \in \mathcal P^{k+1}(K)$ such that for all $(z_h, w_h)\in \mathcal [\mathcal P^{k+1}(K)]^{\perp}\times\mathcal{P}^{0}(K) $ 
\begin{subequations}\label{post_process_1}
	\begin{align}
	(\nabla u_{h}^{\star},\nabla z_h)_K&=-(c\bm q_{h},\nabla z_h)_K,\label{post_process_1_a}\\
	(u_{h}^{\star},w_h)_K&=(u_h,w_h)_K,\label{post_process_1_b}
	\end{align}
\end{subequations}
where $\mathcal [\mathcal P^{k+1}(K)]^{\perp} = \{z_h\in \mathcal P^{k+1}(K)\,:\, (z_h,1)_K = 0\} $. 

To shorten lengthy equations, we define the following HDG bilinear form $ \mathscr B: \bm H^1(\mathcal T_h) \times H^1(\mathcal T_h)\times L^2 (\partial\mathcal T_h)\times  \bm H^1(\mathcal T_h) \times H^1(\mathcal T_h)\times L^2 (\partial\mathcal T_h)\to \mathbb R$ by
\begin{equation}\label{def_B}
\begin{split}
\mathscr  B( \bm q,u,\widehat u;\bm v,w,\widehat w)&=(c\bm{q}, \bm v)_{{\mathcal{T}_h}}- (u, \nabla\cdot \bm v)_{{\mathcal{T}_h}}+\langle \widehat{u}, \bm v\cdot \bm{n} \rangle_{\partial{{\mathcal{T}_h}}}\\
&\quad- (\nabla\cdot\bm{q},  w)_{{\mathcal{T}_h}}-\langle  \tau ( u - \widehat u),  w-\widehat w \rangle_{\partial{{\mathcal{T}_h}}} + \langle \bm q\cdot\bm n, \widehat w \rangle_{\partial \mathcal T_h}.
\end{split}
\end{equation}

By the definition of $\mathscr  B$ in \eqref{def_B},  we can rewrite the HDG formulation of system \eqref{HDG_discrete2}, as follows: find $({\bm{q}}_h,u_h,\widehat u_h)\in \bm V_h\times W_h\times \widehat W_h$  such that
\begin{align}\label{HDG_full_discrete}
\mathscr B (\bm q_h,u_h,\widehat u_h;\bm v_h,w_h,\widehat w_h) =  - (f,w_h)_{\mathcal T_h}
\end{align}
for all $\left(\bm{v}_h,w_h,\widehat w_h\right)\in \bm V_h\times W_h\times \widehat W_h$. Moreover, the exact solution $(\bm q, u)$ also satisfies equation \eqref{HDG_full_discrete}, i.e.,
\begin{align}\label{HDG_exact}
\mathscr B (\bm q,u, u;\bm v_h,w_h,\widehat w_h) =  - (f,w_h)_{\mathcal T_h}
\end{align}
for all $ (\bm v_h,w_h,\widehat w_h) \in \bm V_h\times W_h\times \widehat W_h$.

From \cite[Lemma 2]{ChenMonkZhang1} we recall the following stability result. 
\begin{lemma}\label{energy_norm}
	For any $ (\bm q_h,u_h,\widehat u_h) \in \bm V_h\times W_h\times \widehat W_h$, we have
	\begin{align*}
	\mathscr B (\bm q_h,u_h,\widehat u_h;\bm q_h,-u_h,-\widehat u_h)= (c\bm{q}_h, \bm{q}_h)_{{\mathcal{T}_h}}+\langle \tau( u_h- \widehat u_h),  u_h- \widehat u_h\rangle_{\partial{{\mathcal{T}_h}}}.
	\end{align*}
\end{lemma}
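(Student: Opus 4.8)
The plan is to verify the identity in \Cref{energy_norm} by direct substitution into the definition of the bilinear form $\mathscr B$ given in \eqref{def_B}. Concretely, I would set $\bm v = \bm q_h$, $w = -u_h$, and $\widehat w = -\widehat u_h$ in \eqref{def_B}, and then track how the six terms collapse. The symmetric structure of the HDG form is designed precisely so that the skew-symmetric coupling between the flux and scalar variables cancels, leaving only the two manifestly nonnegative terms on the right-hand side.

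First I would write out $\mathscr B(\bm q_h, u_h, \widehat u_h; \bm q_h, -u_h, -\widehat u_h)$ term by term. The first term $(c\bm q_h, \bm q_h)_{\mathcal T_h}$ survives unchanged and supplies the first term on the right. The crucial cancellation is between the terms involving the scalar variable and its divergence coupling: with the substitution $w = -u_h$, the term $-(u_h, \nabla\cdot\bm q_h)_{\mathcal T_h}$ coming from the second slot pairs against the $-(\nabla\cdot \bm q_h, w)_{\mathcal T_h} = +(\nabla\cdot\bm q_h, u_h)_{\mathcal T_h}$ term. Similarly, the boundary coupling terms $\langle \widehat u_h, \bm q_h\cdot\bm n\rangle_{\partial\mathcal T_h}$ and $\langle \bm q_h\cdot\bm n, \widehat w\rangle_{\partial\mathcal T_h} = -\langle \bm q_h\cdot\bm n, \widehat u_h\rangle_{\partial\mathcal T_h}$ must annihilate each other. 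The stabilization term $-\langle \tau(u_h - \widehat u_h), w - \widehat w\rangle_{\partial\mathcal T_h}$ becomes $-\langle \tau(u_h - \widehat u_h), -(u_h - \widehat u_h)\rangle_{\partial\mathcal T_h} = +\langle \tau(u_h - \widehat u_h), u_h - \widehat u_h\rangle_{\partial\mathcal T_h}$, yielding the second term on the right.

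The only subtlety—and the step I would check most carefully—is the bookkeeping of signs between the volume integration-by-parts pairing $(u_h, \nabla\cdot\bm q_h)$ against $(\nabla\cdot\bm q_h, u_h)$ and the two boundary pairings of $\bm q_h\cdot\bm n$ against the hatted variables, since these are exactly the terms engineered to cancel. Because $\mathscr B$ is stated in an already-integrated-by-parts form (no integration by parts is performed here), the argument reduces to matching the signed contributions directly, using only the symmetry of the $L^2$ inner products $(\cdot,\cdot)_{\mathcal T_h}$ and $\langle\cdot,\cdot\rangle_{\partial\mathcal T_h}$. There is genuinely no analytical obstacle: this is an algebraic identity in the definition of $\mathscr B$, and the result follows once every term is accounted for. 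I would conclude by collecting the two surviving nonnegative terms, which establishes the claimed formula and, incidentally, exhibits the coercivity structure that \Cref{energy_norm} is intended to provide for the subsequent error analysis.
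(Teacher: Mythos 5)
Your proof is correct and is the standard direct-substitution argument: the paper itself states this lemma without proof (citing Lemma 2 of ChenMonkZhang1), and your term-by-term verification — the divergence coupling terms and the two $\bm q_h\cdot\bm n$ boundary pairings cancel by the sign flip in the last two slots, leaving only $(c\bm q_h,\bm q_h)_{\mathcal T_h}$ and the stabilization term — is exactly what that verification amounts to. All sign bookkeeping in your write-up checks out.
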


The following lemma shows that the bilinear form $\mathscr B$ is symmetric and is proved by integration by parts. We do not provide details.
\begin{lemma}\label{eq_B}
	For any $ (\bm q,u,\widehat u; \bm v,w,\widehat w) \in \bm H^1(\mathcal T_h) \times H^1(\mathcal T_h)\times L^2 (\partial\mathcal T_h)\times  \bm H^1(\mathcal T_h) \times H^1(\mathcal T_h)\times L^2 (\partial\mathcal T_h)$, we have
	\begin{align}\label{commute}
	\mathscr B (\bm q,u,\widehat u;\bm v,w,\widehat w)  =  	\mathscr B (\bm v,w,\widehat w;\bm q,u,\widehat u).
	\end{align}
\end{lemma}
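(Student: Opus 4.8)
The plan is to prove the identity \eqref{commute} by expanding both $\mathscr B(\bm q,u,\widehat u;\bm v,w,\widehat w)$ and $\mathscr B(\bm v,w,\widehat w;\bm q,u,\widehat u)$ from the definition \eqref{def_B} and matching the six resulting terms one against another. Three elementary facts do all the work: the volume and face $L^2$ inner products are symmetric in their two real scalar arguments; the diffusivity $c$ is a symmetric matrix, so that $(c\bm q,\bm v)_{\mathcal{T}_h}=(c\bm v,\bm q)_{\mathcal{T}_h}$; and $\tau$ is scalar-valued, so that $\langle\tau(u-\widehat u),w-\widehat w\rangle_{\partial\mathcal{T}_h}=\langle\tau(w-\widehat w),u-\widehat u\rangle_{\partial\mathcal{T}_h}$. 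With these, the $c$-term, the $\tau$-term, and the two pure trace terms $\langle\widehat u,\bm v\cdot\bm n\rangle_{\partial\mathcal{T}_h}$ and $\langle\bm q\cdot\bm n,\widehat w\rangle_{\partial\mathcal{T}_h}$ match their swapped counterparts immediately.

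The only terms that are not manifestly symmetric at first glance are the two mixed divergence pairings $-(u,\nabla\cdot\bm v)_{\mathcal{T}_h}$ and $-(\nabla\cdot\bm q,w)_{\mathcal{T}_h}$. In fact these already match after the swap, since $(\nabla\cdot\bm q,w)_{\mathcal{T}_h}=(w,\nabla\cdot\bm q)_{\mathcal{T}_h}$ and $(u,\nabla\cdot\bm v)_{\mathcal{T}_h}=(\nabla\cdot\bm v,u)_{\mathcal{T}_h}$ by scalar symmetry. If one instead wishes to exhibit a manifestly symmetric expression---this is where integration by parts enters---I would apply the divergence theorem element by element, writing $-(u,\nabla\cdot\bm v)_K=(\nabla u,\bm v)_K-\langle u,\bm v\cdot\bm n\rangle_{\partial K}$ and $-(\nabla\cdot\bm q,w)_K=(\bm q,\nabla w)_K-\langle\bm q\cdot\bm n,w\rangle_{\partial K}$, then sum over $K\in\mathcal{T}_h$ and combine the new boundary contributions with the existing trace terms. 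This recasts $\mathscr B$ as $(c\bm q,\bm v)_{\mathcal{T}_h}+(\nabla u,\bm v)_{\mathcal{T}_h}+(\bm q,\nabla w)_{\mathcal{T}_h}-\langle u-\widehat u,\bm v\cdot\bm n\rangle_{\partial\mathcal{T}_h}-\langle\bm q\cdot\bm n,w-\widehat w\rangle_{\partial\mathcal{T}_h}-\langle\tau(u-\widehat u),w-\widehat w\rangle_{\partial\mathcal{T}_h}$, in which every term visibly pairs a first-argument quantity against a second-argument one symmetrically (for instance $(\nabla u,\bm v)_{\mathcal{T}_h}$ and $(\bm q,\nabla w)_{\mathcal{T}_h}$ are interchanged by the swap).

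There is essentially no obstacle here: the proof is a bookkeeping exercise. The only points requiring a moment's care are that the integration by parts must be performed on the broken space, i.e.\ element by element, which is legitimate because each component of $(\bm q,u)$ and $(\bm v,w)$ lies in $H^1(K)$ on every $K$; and that all face pairings are taken over $\partial\mathcal{T}_h$ as element-by-element sums, so that no inter-element continuity of the traces or fluxes is ever invoked. Because the identity reduces to the symmetry of the real inner products and of $c$ together with a termwise integration by parts, I would expect to state the result with at most a one-line justification, exactly as the authors do.
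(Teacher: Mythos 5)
Your proposal is correct: the paper itself gives no details (it only remarks that the symmetry follows by integration by parts), and your direct termwise matching of the six terms in \eqref{def_B} --- using the symmetry of the $L^2$ pairings, of the matrix $c$, and the scalar nature of $\tau$ --- already suffices, since the swap simply permutes the terms of \eqref{def_B} among themselves. Your optional integration-by-parts recasting into a manifestly symmetric form is also valid and is presumably the route the authors had in mind, so the two approaches coincide in substance.
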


\subsection{Preliminary material}\label{sec:Projectionoperator}
Recall the HDG projection $\Pi_h(\bm q, u):=(\bm \Pi_V \bm q, \Pi_W u )$   (see \cite[equation (2.1a)-(2.1c)]{Cockburn_Gopalakrishnan_Sayas_Porjection_MathComp_2010}), that satisfies the following equations:
\begin{subequations}\label{HDG_projection_operator}
	\begin{align}
	(\bm\Pi_V\bm q,\bm v_h)_K&=(\bm q,\bm v_h)_K,\qquad\qquad\qquad\qquad \forall  \ \bm v_h\in[\mathcal{\bm P}^{k-1}(K)]^d,\label{projection_operator_1}\\
	(\Pi_Wu, w_h)_K&=(u, w_h)_K,\qquad\qquad\qquad\qquad \forall   \ w\in \mathcal P^{k-1}(K),\label{projection_operator_2}\\
	\langle\bm\Pi_V\bm q\cdot\bm n+\tau  \Pi_W u,\widehat w_h\rangle_{F} &= \langle\bm q\cdot\bm n+ \tau  u,\widehat w_h\rangle_{F},\qquad\qquad~\;\forall \  \widehat w_h\in \mathcal P^{k}(F),\label{projection_operator_3}
	\end{align}
	for all faces $F$ of the simplex $K$. If $k=0$, then \eqref{projection_operator_1} and \eqref{projection_operator_2} are vacuous and $\Pi_h$ is defined solely by  \eqref{projection_operator_3}. Note that although we denoted the first component of the projection by $\bm\Pi_V\bm q$, it depends not just on $\bm q$, but  on both $\bm q$ and $u$, as we see from \eqref{HDG_projection_operator}. The same is true for $\Pi_W u$. Hence the notation $(\bm \Pi_V \bm q, \Pi_W u )$ for $\Pi_h(\bm q, u)$ is somewhat misleading, but its convenience outweighs this disadvantage.
\end{subequations}

It is worthwhile  mentioning that the domain of the projection $\Pi_h$ is a subspace of $[L^2(\Omega)]^d\times L^2(\Omega)$ on which the right hand sides of \eqref{HDG_projection_operator} are well defined. \emph{We do not require that  the two components $(\bm q, u)$ satisfy the equation \eqref{p1}}.

The well-posedness of $(\bm \Pi_V,\Pi_W)$ and its approximation properties are given in the following \Cref{pro_error}. The proof  can be found in \cite[ Appendix]{Cockburn_Gopalakrishnan_Sayas_Porjection_MathComp_2010}.
\begin{lemma}\label{pro_error}
	Suppose $\tau|_{\partial K}$ is a positive constant.  Then the system \eqref{HDG_projection_operator} is uniquely solvable for $\bm{\Pi}_V\bm{q}$ and $\Pi_W u$. Furthermore, there is a constant $C$ independent of $K$ and $\tau$ such that
	\begin{subequations}
		\begin{align}		
		\|{{\Pi}_W}{u}-u\|_{ L^2(K)}&\leq Ch_{K}^{\ell_{u}+1}|{u}|_{H^{\ell_u+1}(K)}+C\frac{h_{K}^{\ell_{\bm q}+1}}{\tau_K^{\max}}{| \nabla\cdot \bm{q}|}_{\bm H^{\ell_{\bm q}}(K)},\label{Proerr_u}\\
		\|{\bm{\Pi}_V}\bm{q}-\bm q\|_{\bm L^2(K)} &\leq Ch_{K}^{\ell_{\bm q}+1}|\bm{q}|_{\bm H^{\ell_{\bm q}+1}(K)}+Ch_{K}^{\ell_u+1} \tau_K^\star {|u|}_{H^{\ell_u+1}(K)},\label{Proerr_q}
		\end{align}
	\end{subequations}
	for $\ell_u, \ell_{\bm q}\in [0,k]$. Here $\tau_K^\star: = \max \tau|_{\partial K\backslash F^\star}$,  where $F^\star$ is a face of $K$ at which $\tau|_{\partial K}$ is maximum.
\end{lemma}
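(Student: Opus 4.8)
The plan is to establish unique solvability by a dimension count combined with an energy/injectivity argument, and then to obtain the two bounds by transferring the injectivity into local norm equivalences and combining them with the error relations. For \emph{well-posedness}, the number of conditions in \eqref{HDG_projection_operator} equals $\dim([\mathcal P^k(K)]^d)+\dim(\mathcal P^k(K))$: the interior relations \eqref{projection_operator_1}--\eqref{projection_operator_2} contribute $(d+1)\binom{k-1+d}{d}$ and the $d+1$ face relations \eqref{projection_operator_3} contribute $(d+1)\binom{k+d-1}{d-1}$, and Pascal's rule gives the total $(d+1)\binom{k+d}{d}$. Hence it suffices to show that zero data forces the trivial solution. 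For a homogeneous solution $(\bm v,w):=(\bm\Pi_V\bm q,\Pi_W u)$, testing \eqref{projection_operator_3} with $\widehat w_h=(\bm v\cdot\bm n+\tau w)|_F\in\mathcal P^k(F)$ gives $\bm v\cdot\bm n+\tau w=0$ on $\partial K$. Since $\nabla w\in[\mathcal P^{k-1}(K)]^d$ and $\nabla\cdot\bm v\in\mathcal P^{k-1}(K)$, the interior relations yield $(\bm v,\nabla w)_K=0$ and $(\nabla\cdot\bm v,w)_K=0$, so integration by parts gives $\langle\bm v\cdot\bm n,w\rangle_{\partial K}=0$ and therefore $\langle\tau w,w\rangle_{\partial K}=0$, i.e.\ $w|_{\partial K}=0$. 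As $w\perp\mathcal P^{k-1}(K)$, writing $w=b_K\,p$ with $p\in\mathcal P^{k-d-1}(K)$ and testing against $p$ forces $p=0$, hence $w\equiv0$. Then $\bm v\cdot\bm n=0$ on $\partial K$ and $\nabla\cdot\bm v=0$, and the analogous (slightly more technical) argument for the divergence-free, normal-trace-free, $[\mathcal P^{k-1}]^d$-orthogonal field $\bm v$ forces $\bm v\equiv0$.

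For the estimates I would first record the error relations: with $\bm\delta^{\bm q}=\bm\Pi_V\bm q-\bm q$ and $\delta^u=\Pi_W u-u$, equations \eqref{HDG_projection_operator} give $\bm\delta^{\bm q}\perp[\mathcal P^{k-1}]^d$ and $\delta^u\perp\mathcal P^{k-1}$ in $K$, together with $\langle\bm\delta^{\bm q}\cdot\bm n+\tau\delta^u,\widehat w_h\rangle_F=0$ for $\widehat w_h\in\mathcal P^k(F)$. The two injectivity facts above, combined with finite-dimensional norm equivalence and affine scaling, furnish the key tools: for $\zeta\in\mathcal P^k(K)$ with $\zeta\perp\mathcal P^{k-1}$ one has $\|\zeta\|_{L^2(K)}\le Ch_K^{1/2}\|\zeta\|_{L^2(\partial K)}$, and for $\bm\theta\in[\mathcal P^k(K)]^d$ with $\bm\theta\perp[\mathcal P^{k-1}]^d$ one has $\|\bm\theta\|_{L^2(K)}\le Ch_K^{1/2}\|\bm\theta\cdot\bm n\|_{L^2(\partial K)}$.

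Next I would subtract polynomial comparison projections (an $L^2(K)$ projection of $u$ and an $\bm H(\mathrm{div})$-type projection of $\bm q$), whose errors are classical and bounded by $h_K^{\ell+1}$ times the appropriate seminorm, and apply the norm equivalences to the genuine polynomials $\zeta:=\Pi_Wu-(\text{comparison of }u)$ and $\bm\theta:=\bm\Pi_V\bm q-(\text{comparison of }\bm q)$. Testing the face relation with $\zeta$ and invoking the orthogonality identity $\langle\bm\theta\cdot\bm n,\zeta\rangle_{\partial K}=0$ (the integration by parts used above) produces $\|\tau^{1/2}\zeta\|_{\partial K}^2$ against boundary projection errors of $u$ and of the flux; choosing the flux projection so that its degree-$k$ normal moments match those of $\bm q$ and integrating by parts converts its contribution into a term carrying $\nabla\cdot\bm q$, which explains why \eqref{Proerr_u} sees $|\nabla\cdot\bm q|_{H^{\ell_{\bm q}}}$ rather than $|\bm q|$. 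Feeding the resulting boundary bounds through the two norm equivalences yields \eqref{Proerr_u} and \eqref{Proerr_q}.

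The main obstacle is the sharp bookkeeping of $\tau$. Because $\tau$ couples $\bm q\cdot\bm n$ and $u$ asymmetrically on the faces, it enters the two bounds with opposite powers -- as $1/\tau_K^{\max}$ in the scalar estimate and as $\tau_K^\star$ in the flux estimate -- and since $\tau$ is only piecewise constant on $\partial K$ one must track the extremal face, which is exactly what produces $\tau_K^\star=\max\tau|_{\partial K\backslash F^\star}$. I expect the $\tau$-weighted Cauchy--Schwarz steps together with the comparison-projection choice to require care; an equivalent route pushes the whole argument to the reference element and tracks the homogeneity in the single parameter $\widehat\tau\sim\tau_Kh_K$ via the Bramble--Hilbert lemma before scaling back, but the non-constant $\tau$ again forces the refined $\tau_K^{\max}$/$\tau_K^\star$ accounting.
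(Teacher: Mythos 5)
Your outline follows essentially the same route as the proof this lemma actually rests on. Note that the paper itself contains no proof: it quotes the statement (including the $\tau_K^{\max}/\tau_K^\star$ bookkeeping) from the appendix of Cockburn--Gopalakrishnan--Sayas \cite{Cockburn_Gopalakrishnan_Sayas_Porjection_MathComp_2010}, and it later reuses exactly the two intermediate comparison estimates you describe --- $\|\Pi_W u-\Pi_k^o u\|_{L^2(K)}$ controlled via $\nabla\cdot\bm q$, and $\|\bm\Pi_V\bm q-\bm B_V\bm q\|_{L^2(K)}$ controlled via $u$ --- as \eqref{from_Bernardo} and \eqref{def3}. Your square-system count, the injectivity argument for $w$, the decoupling identity $\langle\bm\theta\cdot\bm n,\zeta\rangle_{\partial K}=0$, and the appearance of $|\nabla\cdot\bm q|$ through integration by parts against $\zeta\perp\mathcal P^{k-1}(K)$ all match that proof. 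The only step you assert rather than argue is that a field $\bm v\in[\mathcal P^k(K)]^d$ with $\bm v\perp[\mathcal P^{k-1}(K)]^d$ and $\bm v\cdot\bm n=0$ on $\partial K$ vanishes; this is true but deserves a line: testing against $\nabla w_h$ for $w_h\in\mathcal P^k(K)$ shows $\nabla\cdot\bm v=0$ automatically, and then (in $d=2$) $\bm v=\mathbf{curl}(b_K\psi)$ with $\psi\in\mathcal P^{k-2}(K)$, and choosing $\bm w\in[\mathcal P^{k-1}(K)]^2$ with $\mathrm{rot}\,\bm w=\psi$ forces $\int_K b_K\psi^2=0$; the $d=3$ case needs the N\'ed\'elec-type decomposition. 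Also observe that under the hypothesis actually stated here ($\tau|_{\partial K}$ constant) the $\tau_K^{\max}$ versus $\tau_K^\star$ distinction you worry about collapses, so the delicate per-face accounting is only needed for the more general version in the cited reference.
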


Besides the projections $\bm \Pi_V$ and $\Pi_W$, in the analysis we also need to  introduce the standard local $L^2$ projection operators $\Pi_\ell^o: L^2(K)\to \mathcal P^{\ell}(K)$ and $\Pi_k^\partial: L^2(F)\to \mathcal P^{k}(F)$ satisfying
\begin{subequations}
	\begin{align}
	(\Pi_\ell^o w, w_h)_K &= (w,w_h)_K,\qquad \forall \; w_h\in \mathcal P^\ell(K),\label{L2_do}\\
	\langle \Pi_k^\partial   w, \widehat w_h \rangle_F &= \langle w,  \widehat w_h  \rangle_F,\qquad \forall \;   \widehat w_h \in \mathcal P^{k}(F).\label{L2_edge}
	\end{align}
\end{subequations}
We use $\bm \Pi_\ell^o$ to denote the local vector $L^2$ projection operator, the definition componentwise is the same as local scalar  $L^2$ projection operator.  The next lemma gives the approximation properties of $\Pi_\ell^o$ and its proof can be found in \cite[Theorem 3.3.3, Theorem 3.3.4]{book2}.
\begin{lemma}\label{approximation_of_L2}
	Let $\ell\ge 0$ be an integer and  $\rho\in [1, +\infty]$. If $(\ell+1)\rho<d$, then we require $d, \rho$ and $\ell$ to also satisfy $2\le \frac{d\rho}{d -(\ell+1)\rho }$. For $j\in \{0, 1 ,\ldots, \ell+1\}$, if $s_j$ satisfies
	\begin{align}\label{index_s}
	\begin{cases}
	\rho \le s_j \le  \frac{d\rho}{d -(\ell+1-j)\rho } &(\ell+1-j)\rho <d,\\
	\rho \le s_j  < \infty&(\ell+1-j)\rho =d,\\
	\rho \le s_j \le \infty &(\ell+1-j)\rho >d,
	\end{cases}
	\end{align}
	then there exists a constant $C$ which is independent of $K$ such that 
	\begin{subequations}
		\begin{align}
		\| \nabla^j(\Pi_\ell^o u - u) \|_{L^{s_j}(K)} &\le C h_K^{\ell+1-j+\frac{d}{s_j} - \frac{d}{\rho}}|u|_{W^{\ell+1, \rho}(K)},\label{Pro_jec_1}\\
		\| \nabla^j(\Pi_\ell^o u - u) \|_{L^{s_j}(\partial K)}&\le C h_K^{\ell+1-j+\frac{d-1}{s_j} - \frac{d}{\rho}}|u|_{W^{\ell+1, \rho}(K)}.\label{Pro_jec_2}
		\end{align}	
	\end{subequations}
\end{lemma}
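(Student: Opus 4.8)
I would prove both \eqref{Pro_jec_1} and \eqref{Pro_jec_2} by the classical affine-scaling argument on a reference simplex, combined with the Bramble--Hilbert lemma; the Sobolev and trace embedding theorems enter precisely through the index restrictions \eqref{index_s}. Fix a reference simplex $\hat K$ and, for each $K\in\mathcal T_h$, let $F_K(\hat{\bm x})=B_K\hat{\bm x}+\bm b_K$ be the affine map with $F_K(\hat K)=K$; shape regularity gives $\|B_K\|\simeq h_K$, $\|B_K^{-1}\|\simeq h_K^{-1}$ and $|\det B_K|\simeq h_K^d$. The structural fact that drives the reduction is that the local $L^2$ projection commutes with this pullback: writing $\hat u=u\circ F_K$, one has $\widehat{\Pi_\ell^o u}=\hat\Pi_\ell^o\hat u$, where $\hat\Pi_\ell^o$ is the $L^2$ projection onto $\mathcal P^\ell(\hat K)$. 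This holds because an affine map carries $\mathcal P^\ell(K)$ onto $\mathcal P^\ell(\hat K)$ and the constant Jacobian factors out of the defining identity \eqref{L2_do}. Hence it suffices to estimate the error $\hat\Pi_\ell^o\hat u-\hat u$ on the fixed element $\hat K$ and then transport the bound back to $K$.

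First I would record the scaling relations. Each application of $\nabla$ produces a factor $h_K^{-1}$, the change of variables in the $L^{s_j}(K)$ integral contributes $|\det B_K|^{1/s_j}\simeq h_K^{d/s_j}$, and the data seminorm transforms as $|\hat u|_{W^{\ell+1,\rho}(\hat K)}\simeq h_K^{\ell+1-d/\rho}|u|_{W^{\ell+1,\rho}(K)}$. Collecting these exponents turns a reference-element bound of the form $\|\hat\nabla^j(\hat\Pi_\ell^o\hat u-\hat u)\|_{L^{s_j}(\hat K)}\le C|\hat u|_{W^{\ell+1,\rho}(\hat K)}$ into exactly the power $h_K^{\ell+1-j+d/s_j-d/\rho}$ appearing in \eqref{Pro_jec_1}. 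The boundary estimate \eqref{Pro_jec_2} is then obtained from the interior one by a scaled trace inequality on $K$ (equivalently, by applying the trace theorem on the fixed element $\hat K$ and scaling): the $(d-1)$-dimensional surface measure replaces the factor $h_K^{d/s_j}$ by $h_K^{(d-1)/s_j}$, which is the only change and accounts for the exponent $\frac{d-1}{s_j}$.

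The heart of the matter is therefore the reference-element estimate
\[
\|\hat\nabla^j(\hat\Pi_\ell^o\hat u-\hat u)\|_{L^{s_j}(\hat K)}\le C\,|\hat u|_{W^{\ell+1,\rho}(\hat K)}.
\]
To obtain it I would view $T\hat u:=\hat\nabla^j(\hat\Pi_\ell^o\hat u-\hat u)$ as a linear operator and check that it is bounded from $W^{\ell+1,\rho}(\hat K)$ into $L^{s_j}(\hat K)$. This boundedness is exactly where \eqref{index_s} is used: it is the continuous Sobolev embedding $W^{\ell+1,\rho}(\hat K)\hookrightarrow W^{j,s_j}(\hat K)$, whose range of admissible $s_j$ splits into the three regimes of \eqref{index_s} according to whether $(\ell+1-j)\rho$ is less than, equal to, or greater than $d$. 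The same family of embeddings, read off at $j=0$, shows that the side condition $2\le d\rho/\big(d-(\ell+1)\rho\big)$ in the case $(\ell+1)\rho<d$ is precisely what guarantees $W^{\ell+1,\rho}(\hat K)\hookrightarrow L^2(\hat K)$, so that the $L^2$ projection $\hat\Pi_\ell^o$ is actually defined on the data under consideration. Since $\hat\Pi_\ell^o$ reproduces polynomials, $T$ annihilates $\mathcal P^\ell(\hat K)$, and the Bramble--Hilbert (Deny--Lions) lemma upgrades the mere boundedness of $T$ to a bound by the top-order seminorm $|\hat u|_{W^{\ell+1,\rho}(\hat K)}$, yielding the displayed estimate.

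I expect the main obstacle to be the index bookkeeping rather than any single hard inequality: one must verify, in each of the three regimes of \eqref{index_s}, that the stated range of $s_j$ is admissible for the interior embedding, handle the borderline cases $(\ell+1-j)\rho=d$ via the finite-exponent restriction, and check that the trace step used for \eqref{Pro_jec_2} remains compatible with the same range of $s_j$. Once the reference-element estimates are in place the scaling step is entirely routine, and since the assertion is purely local to $K$ no summation over the mesh is required.
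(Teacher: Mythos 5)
The paper does not prove this lemma itself but cites the standard textbook result \cite[Theorem 3.3.3, Theorem 3.3.4]{book2}, whose proof is exactly the argument you outline: affine pullback to a reference simplex (using that $\Pi_\ell^o$ commutes with the affine map), the Sobolev embedding $W^{\ell+1,\rho}(\hat K)\hookrightarrow W^{j,s_j}(\hat K)$ encoded by \eqref{index_s}, the Bramble--Hilbert lemma to pass to the seminorm, a trace step for \eqref{Pro_jec_2}, and the scaling bookkeeping that produces the exponents. Your proposal is correct and follows essentially the same route as the cited proof.
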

In the analysis, we also need the following {standard} inverse inequality~{\cite[Theorem 3.4.1]{book2}}.
\begin{lemma}[Inverse inequality]\label{Inverse_inequality}
	Let $k\ge 0$ be an integer, $\mu, \rho\in [1, +\infty]$ and $\eta>0$, then there exists $C$ depend on $k, \mu, \rho, d$ and $\eta$ such that 
	\begin{align}\label{inverse}
	|u_h|_{t, \mu, K} \le Ch_K^{\frac{d}{\mu} - \frac{d}{\rho}-t+s}	|u_h|_{s,\rho, K}, \quad \forall u_h\in \mathcal P_k(K), \quad t\ge s.
	\end{align}
\end{lemma}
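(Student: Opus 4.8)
The plan is to prove \eqref{inverse} by the classical scaling argument: transport the estimate to a fixed reference simplex $\widehat K$, where it reduces to the comparison of two quantities on the finite-dimensional space of polynomials, and then transform back to $K$, collecting the powers of $h_K$ produced by the change of variables. I write $F_K(\widehat{\bm x}) = B_K\widehat{\bm x} + \bm b_K$ for the affine map with $F_K(\widehat K) = K$, and set $\widehat u_h = u_h\circ F_K$, which lies in $\mathcal P_k(\widehat K)$ precisely when $u_h\in\mathcal P_k(K)$.

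First I would record the standard affine scaling bounds for Sobolev seminorms: for any $m$ and $p$,
\begin{align*}
|u_h|_{m,p,K} &\le C\,\|B_K^{-1}\|^{m}\,|\det B_K|^{1/p}\,|\widehat u_h|_{m,p,\widehat K},\\
|\widehat u_h|_{m,p,\widehat K} &\le C\,\|B_K\|^{m}\,|\det B_K|^{-1/p}\,|u_h|_{m,p,K},
\end{align*}
where $C$ depends only on $m$, $p$, and $d$. The role of the shape-regularity parameter $\eta$ (bounding $h_K$ divided by the inradius of $K$) is to guarantee $\|B_K\|\le C h_K$, $\|B_K^{-1}\|\le C h_K^{-1}$ and $|\det B_K|\sim h_K^{d}$ with constants depending on $\eta$ alone; this is precisely where $\eta$ enters the final constant.

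The core step takes place on $\widehat K$ and is the inequality
\begin{align*}
|\widehat u_h|_{t,\mu,\widehat K} \le \widehat C\,|\widehat u_h|_{s,\rho,\widehat K} \qquad \forall\,\widehat u_h\in\mathcal P_k(\widehat K),
\end{align*}
with $\widehat C$ depending only on $k,s,t,\mu,\rho,d$. This is the part that needs care, because $|\cdot|_{s,\rho,\widehat K}$ is only a seminorm, with kernel $\mathcal P_{s-1}(\widehat K)$. Here the hypothesis $t\ge s$ is essential: it forces the kernel of the right-hand seminorm to be contained in that of the left-hand one (if all order-$s$ derivatives of a polynomial vanish, then so do all order-$t$ derivatives). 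Consequently both quantities descend to the finite-dimensional quotient $\mathcal P_k(\widehat K)/\mathcal P_{s-1}(\widehat K)$, on which $|\cdot|_{s,\rho,\widehat K}$ is a genuine norm; a compactness argument on the unit sphere of this quotient, together with the equivalence of all norms on a finite-dimensional space, then yields the displayed bound. I would flag this reference-element comparison, and the observation that $t\ge s$ makes it legitimate, as the one genuinely non-mechanical ingredient of the argument.

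Finally I would chain the three estimates: bound $|u_h|_{t,\mu,K}$ by $|\widehat u_h|_{t,\mu,\widehat K}$, apply the reference-element inequality, and bound $|\widehat u_h|_{s,\rho,\widehat K}$ back by $|u_h|_{s,\rho,K}$. The accumulated factors $\|B_K^{-1}\|^{t}$, $|\det B_K|^{1/\mu}$, $\|B_K\|^{s}$, and $|\det B_K|^{-1/\rho}$ contribute the power
\begin{align*}
h_K^{-t}\cdot h_K^{d/\mu}\cdot h_K^{s}\cdot h_K^{-d/\rho} = h_K^{\frac{d}{\mu}-\frac{d}{\rho}-t+s},
\end{align*}
which is exactly the exponent in \eqref{inverse}. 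Everything apart from the reference-element seminorm comparison is routine affine bookkeeping.
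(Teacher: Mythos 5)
Your proof is correct. The paper does not actually prove this lemma --- it is quoted directly from a standard reference (\cite{book2}, Theorem 3.4.1) --- and your scaling argument (affine map to the reference simplex, seminorm comparison on the finite-dimensional quotient $\mathcal P_k(\widehat K)/\mathcal P_{s-1}(\widehat K)$ where the hypothesis $t\ge s$ guarantees the kernel inclusion, then transformation back with the shape-regularity bounds $\|B_K\|\le Ch_K$, $\|B_K^{-1}\|\le Ch_K^{-1}$, $|\det B_K|\sim h_K^d$) is precisely the classical proof of that cited result, with the exponent bookkeeping coming out right.
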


In the analysis, not only the $L^2$ approximation properties of $(\bm \Pi_V, \Pi_W)$ are important,  for us, but the $L^{\infty}$ approximation of these projection operators plays an essential role. We provide these estimates in the next theorem.

\begin{theorem}\label{general_lp_projection_error}
	Let $k\ge 0$ be an integer and  $\rho\in [1, +\infty]$. If $(k+1)\rho<d$, then we need $d, \rho$ and $k$ to  satisfy $2\le \frac{d\rho}{d -(k+1)\rho }$. For $j\in \{0, 1 ,\ldots, k+1\}$, if $s_j$ satisfies \eqref{index_s}, then
	\begin{subequations}
		\begin{align}
		\| \Pi_{W} u - u \|_{L^{s_j}(K)}&\le  \frac{C}{\tau}{h_{K}^{k+1+\frac{d}{s_j} - \frac{d}{\rho}}}{|\nabla\cdot \bm{q}|}_{W^{k,\rho}(K)} +C h_{K}^{k+1+\frac{d}{s_j} - \frac{d}{\rho}}|{u}|_{W^{k+1,\rho}(K)},\label{error_pi_u1}\\
		\| \bm \Pi_{V} \bm q  - \bm q \|_{\bm L^{s_j}(K)} &\le Ch_{K}^{k+1+\frac{d}{s_j} - \frac{d}{\rho}}|\bm{q}|_{\bm W^{k+1,\rho}(K)}+Ch_{K}^{k+1+\frac{d}{s_j} - \frac{d}{\rho}} {|u|}_{ W^{k+1,\rho}(K)}\label{error_pi_q1}.
		\end{align}	
	\end{subequations}
\end{theorem}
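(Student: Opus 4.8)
The plan is to work element by element and split each projection error through the local $L^2$ projections $\Pi_k^o$ and $\bm\Pi_k^o$ of \eqref{L2_do}. Writing
\[
\Pi_W u-u=\delta^u-(u-\Pi_k^o u),\qquad \bm\Pi_V\bm q-\bm q=\bm\delta^q-(\bm q-\bm\Pi_k^o\bm q),
\]
with $\delta^u:=\Pi_W u-\Pi_k^o u\in\mathcal P^k(K)$ and $\bm\delta^q:=\bm\Pi_V\bm q-\bm\Pi_k^o\bm q\in[\mathcal P^k(K)]^d$, the genuinely non-polynomial pieces $u-\Pi_k^o u$ and $\bm q-\bm\Pi_k^o\bm q$ are controlled directly by \eqref{Pro_jec_1} of \Cref{approximation_of_L2} (with $\ell=k$, $j=0$), and produce exactly the $|u|_{W^{k+1,\rho}(K)}$ and $|\bm q|_{W^{k+1,\rho}(K)}$ terms on the right-hand sides of \eqref{error_pi_u1}--\eqref{error_pi_q1}. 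It then remains to bound the polynomial remainders $\delta^u,\bm\delta^q$ in $L^{s_j}(K)$. Subtracting the $L^2$-projection defining relations from \eqref{projection_operator_1}--\eqref{projection_operator_3} shows that $(\bm\delta^q,\delta^u)$ satisfies $(\bm\delta^q,\bm v_h)_K=0$ for $\bm v_h\in[\mathcal P^{k-1}(K)]^d$, $(\delta^u,w_h)_K=0$ for $w_h\in\mathcal P^{k-1}(K)$, and
\[
\langle\bm\delta^q\cdot\bm n+\tau\delta^u,\widehat w_h\rangle_{\partial K}=\langle(\bm q-\bm\Pi_k^o\bm q)\cdot\bm n+\tau(u-\Pi_k^o u),\widehat w_h\rangle_{\partial K}
\]
for all $\widehat w_h\in\mathcal P^k(F)$ on each face $F$ of $K$.

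For \eqref{error_pi_u1} I would test this face relation with $\widehat w_h=\delta^u|_{\partial K}$. Integrating $\langle\bm\delta^q\cdot\bm n,\delta^u\rangle_{\partial K}$ by parts and using $\nabla\delta^u\in[\mathcal P^{k-1}(K)]^d$ together with $\nabla\cdot\bm\delta^q\in\mathcal P^{k-1}(K)$, both volume terms vanish by the first two relations, so the $\bm\delta^q$ contribution drops out entirely. Integrating the remaining $\langle(\bm q-\bm\Pi_k^o\bm q)\cdot\bm n,\delta^u\rangle_{\partial K}$ by parts, the gradient term vanishes by the definition \eqref{L2_do} of $\bm\Pi_k^o$, and since $\Pi_{k-1}^o\delta^u=0$ one may replace $\nabla\cdot(\bm q-\bm\Pi_k^o\bm q)$ by $\nabla\cdot\bm q-\Pi_{k-1}^o(\nabla\cdot\bm q)$. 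This is the crucial cancellation: it leaves the $\bm q$-dependence expressed \emph{solely} through $\nabla\cdot\bm q$, as in \eqref{error_pi_u1}. The identity
\[
\tau\|\delta^u\|_{L^2(\partial K)}^2=\big(\nabla\cdot\bm q-\Pi_{k-1}^o(\nabla\cdot\bm q),\delta^u\big)_K+\tau\langle u-\Pi_k^o u,\delta^u\rangle_{\partial K},
\]
together with the scaled norm equivalence $\|\delta^u\|_{L^2(K)}\le Ch_K^{1/2}\|\delta^u\|_{L^2(\partial K)}$ valid on $\{w\in\mathcal P^k(K):\Pi_{k-1}^o w=0\}$, gives an $L^2(K)$ bound on $\delta^u$; inserting \eqref{Pro_jec_1}--\eqref{Pro_jec_2} for the two data terms (with $s_j=2$, and $\ell=k-1$, $\ell=k$ respectively) and finishing with the inverse estimate \eqref{inverse} to pass from $L^2(K)$ to $L^{s_j}(K)$ yields \eqref{error_pi_u1}, the factor $1/\tau$ arising from the $\tau$-weight on $\delta^u$.

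For \eqref{error_pi_q1} I would instead extract the normal trace: the face relation gives $\bm\delta^q\cdot\bm n$ on $\partial K$ in terms of the data and the already-estimated $\delta^u$, and testing with $\widehat w_h=\bm\delta^q\cdot\bm n$ bounds $\|\bm\delta^q\cdot\bm n\|_{L^2(\partial K)}$ by $\|(\bm q-\bm\Pi_k^o\bm q)\cdot\bm n\|_{L^2(\partial K)}+\tau\|u-\Pi_k^o u\|_{L^2(\partial K)}+\tau\|\delta^u\|_{L^2(\partial K)}$. Because $\bm\Pi_{k-1}^o\bm\delta^q=0$, the normal trace determines $\bm\delta^q$ (this injectivity is precisely the homogeneous unisolvence underlying \Cref{pro_error}), so a second scaled equivalence $\|\bm\delta^q\|_{L^2(K)}\le Ch_K^{1/2}\|\bm\delta^q\cdot\bm n\|_{L^2(\partial K)}$ applies. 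Estimating the data terms by \eqref{Pro_jec_2}, reusing the bound on $\|\delta^u\|_{L^2(\partial K)}$, absorbing $|\nabla\cdot\bm q|_{W^{k,\rho}(K)}\le C|\bm q|_{W^{k+1,\rho}(K)}$, and applying \eqref{inverse} once more to reach $L^{s_j}(K)$ produces \eqref{error_pi_q1}.

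The main obstacle is not any single estimate but the two scaled norm equivalences on the constrained polynomial subspaces $\{w\in\mathcal P^k(K):\Pi_{k-1}^o w=0\}$ and $\{\bm v\in[\mathcal P^k(K)]^d:\bm\Pi_{k-1}^o\bm v=0\}$: one must verify that $\|\cdot\|_{L^2(\partial K)}$ and $\|\bm v\cdot\bm n\|_{L^2(\partial K)}$ are genuinely norms there, which reduces to the injectivity statements above and is proved once on the reference element before scaling. The second delicate point is the bookkeeping of the $h_K$- and $\tau$-powers: the exponents $k+1+\tfrac{d}{s_j}-\tfrac{d}{\rho}$ in \eqref{error_pi_u1}--\eqref{error_pi_q1} emerge only after combining the $h_K^{1/2}$ from each norm equivalence, the $h_K^{d/s_j-d/2}$ from \eqref{inverse}, and the boundary rates of \Cref{approximation_of_L2}, while the index conditions \eqref{index_s} are exactly what guarantee that every application of \Cref{approximation_of_L2} and \eqref{inverse} is admissible. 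As in \Cref{pro_error}, no convexity or dimensional restriction is needed at this stage.
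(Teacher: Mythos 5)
Your proposal is correct, and its outer skeleton is the same as the paper's: split each error through the local $L^2$ projection $\Pi_k^o$, bound the non-polynomial remainder by \Cref{approximation_of_L2}, and upgrade the polynomial remainder from $L^2(K)$ to $L^{s_j}(K)$ with the inverse inequality \eqref{inverse}, which is exactly where the exponent $k+1+\tfrac{d}{s_j}-\tfrac{d}{\rho}$ comes from. Where you genuinely diverge is in how the $L^2(K)$ bounds on the polynomial remainders $\delta^u=\Pi_W u-\Pi_k^o u$ and $\bm\delta^q=\bm\Pi_V\bm q-\bm\Pi_k^o\bm q$ are obtained. The paper imports these as black boxes: \eqref{from_Bernardo} is quoted from the proof of \cite[Proposition A.2]{Cockburn_Gopalakrishnan_Sayas_Porjection_MathComp_2010}, and for the flux it detours through the single-face projection $\bm B_V$ of \cite{Cockburn_JSC_2007}, using \eqref{eds1}--\eqref{eds2} together with the comparison \eqref{def3}. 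You instead rederive both bounds from the defining relations \eqref{projection_operator_1}--\eqref{projection_operator_3}: testing the face relation with $\delta^u$ and using the two volume orthogonalities to kill the $\bm\delta^q$ coupling reproduces precisely the identity behind \eqref{from_Bernardo} (including the replacement of $\nabla\cdot\bm q$ by $\nabla\cdot\bm q-\Pi_{k-1}^o(\nabla\cdot\bm q)$, which is what makes the $1/\tau$ term depend only on $\nabla\cdot\bm q$), and your normal-trace argument for $\bm\delta^q$ eliminates $\bm B_V$ entirely. This buys a self-contained proof at the cost of the two scaled norm equivalences on the constrained subspaces, which you correctly identify as the real content. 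One imprecision worth fixing: the injectivity of $\bm v\mapsto \bm v\cdot\bm n|_{\partial K}$ on $\{\bm v\in[\mathcal P^k(K)]^d:\bm\Pi_{k-1}^o\bm v=0\}$ is not literally ``the homogeneous unisolvence underlying \Cref{pro_error}'' --- that unisolvence concerns the coupled pair $(\bm\Pi_V\bm q,\Pi_W u)$ --- but rather the unisolvence of the single-face (BDM-type) degree-of-freedom set from \cite{Cockburn_JSC_2007}; the fact is true and available, but cite that source or prove it on the reference element. Two minor bookkeeping points: your route leaves a factor $\tau$ on the $|u|_{W^{k+1,\rho}(K)}$ term in \eqref{error_pi_q1} (as does \eqref{Proerr_q}), harmlessly absorbed into $C$; and the intermediate uses of \Cref{approximation_of_L2} with target exponent $2$ need $\rho\le 2$ or a H\"older step, which is exactly what the hypothesis $2\le \frac{d\rho}{d-(k+1)\rho}$ is there to license --- the paper glosses over this in the same way.
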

\begin{proof}
	First, we prove \eqref{error_pi_u1}.  In the proof of \cite[Proposition A.2.]{Cockburn_Gopalakrishnan_Sayas_Porjection_MathComp_2010} we have 
	\begin{align}\label{from_Bernardo}
	\begin{split}
	\|\Pi_W u - \Pi_k^o u\|_{L^2(K)}& \le C \frac{h_K}{\tau} \left( \|\nabla\cdot \bm q -  \Pi_{k-1}^o (\nabla\cdot \bm q )\|_{L^2(K)}  \right) \\
	&\quad + C \left( \| u -\Pi_k^o u\|_{L^2(K)} + h_K\| \nabla (u -\Pi_k^o u)\|_{L^2(K)} \right).
	\end{split}
	\end{align}
	Then, using the local inverse estimate in \Cref{Inverse_inequality},
	\begin{align*}
	\|\Pi_W u -  u\|_{L^{s_j}(K)} &\le 	\|\Pi_W u -   \Pi_k^o u\|_{L^{s_j}(K)}  + 	\|  \Pi_k^o u - u\|_{L^{s_j}(K)} \\
	&\le 	Ch_K^{\frac{d}{s_j} - \frac{d}{2}}\|\Pi_W u -   \Pi_k^o u\|_{L^{2}(K)}  + 	\|  \Pi_k^o u - u\|_{L^{s_j}(K)}&\textup{by }\eqref{inverse}\\
	&\le 	Ch_K^{\frac{d}{s_j} - \frac{d}{2}} \left(\frac{h_K}{\tau}\|\nabla\cdot \bm q -  \Pi_{k-1}^o (\nabla\cdot \bm q )\|_{L^2(K)} \right.\\ 
	&\qquad \qquad \quad \left. +  \| u -\Pi_k^o u\|_{L^2(K)} + h_K\| \nabla (u -\Pi_k^o u)\|_{L^2(K)}\right) &\textup{by }\eqref{from_Bernardo}\\
	&\quad +	\|  \Pi_k^o u - u\|_{L^{s_j}(K)} \\
	&\le \frac{C}{\tau}{h_{K}^{k+1+\frac{d}{s_j} - \frac{d}{\rho}}}{|\nabla\cdot \bm{q}|}_{W^{k,\rho}(K)} +C h_{K}^{k+1+\frac{d}{s_j} - \frac{d}{\rho}}|{u}|_{W^{k+1,\rho}(K)}&\textup{by }\eqref{Pro_jec_1}.
	\end{align*}

	Next, we prove  \eqref{error_pi_q1}. Because we do not have an estimate like \eqref{from_Bernardo} for $\bm \Pi_V$,  we  introduce the single face HDG projection $\bm B_V$  defined on sufficiently smooth vector functions $\bm q$ such that $\bm B_V \bm q \in [\mathcal P^k(K)]^d$ satisfies \cite[equation (3.10)]{Cockburn_JSC_2007}:
	\begin{subequations}\label{HDG_projection_operatorB}
		\begin{align}
		(\bm B_V\bm q,\bm v_h)_K&=(\bm q,\bm v_h)_K,\qquad\qquad\quad~ \forall  \ \bm v_h\in[\mathcal{\bm P}^{k-1}(K)]^d,\label{projection_operator_B1}\\
		\langle\bm B_V\bm q\cdot\bm n,\mu_h\rangle_{F_i} &= \langle\bm q\cdot\bm n,\mu_h\rangle_{F_i}  \qquad\qquad~\;\forall \  \mu_h\in \mathcal P^{k}(F_i),i=1\ldots, d.\label{projection_operator_B2}
		\end{align}
	\end{subequations}
	By  \cite[equation (3.13) of Lemma 3.2, Lemma 3.3]{Cockburn_JSC_2007}  we have 
	\begin{subequations}
		\begin{align}
		\|\bm B_V\bm q\|_{\bm L^2(K)} &\le \|\bm q\|_{\bm L^2(K)} +h_K^{1/2}\|\bm q\cdot \bm n\|_{ L^2(\partial K)},\label{eds1}\\
		\|\bm B_V\bm q - \bm q\|_{\bm H^s(K)} &\le C h_K^{k+1-s}|\bm q|_{\bm H^{k+1}(K)},\label{eds2}
		\end{align}
	\end{subequations}
	for $0\le s \le k+1$. Note that $\bm B_V \bm q_h = \bm q_h$ for any $\bm q_h\in [\mathcal P^k(K)]^d$. Then, using the local inverse estimate in \Cref{Inverse_inequality},
	\begin{align*}
	\|\bm B_V \bm q - \bm q\|_{\bm L^{s_j} (K)}&\le \|\bm B_V \bm q - \bm\Pi_k^o\bm q\|_{\bm L^{s_j} (K)} + \| \bm\Pi_k^o\bm q - \bm q\|_{\bm L^{s_j} (K)}\\
	&\le Ch_K^{\frac{d}{s_j} -\frac{d}{2}} \|\bm B_V \bm q - \bm\Pi_k^o\bm q\|_{\bm L^{2} (K)} + \| \bm\Pi_k^o\bm q - \bm q\|_{\bm L^{s_j} (K)}&\textup{by }\eqref{inverse}\\
	&= Ch_K^{\frac{d}{s_j} -\frac{d}{2}} \|\bm B_V (\bm q - \bm\Pi_k^o\bm q)\|_{\bm L^{2} (K)} + \| \bm\Pi_k^o\bm q - \bm q\|_{\bm L^{s_j} (K)}\\
	&\le  Ch_K^{\frac{d}{s_j} -\frac{d}{2}} \left(\|\bm q - \bm\Pi_k^o\bm q\|_{\bm L^{2} (K)} + h_K^{1/2} \|\bm q - \bm\Pi_k^o\bm q\|_{\bm L^2(\partial K)} \right)&\textup{by }\eqref{eds1}\\
	&\quad  + \| \bm\Pi_k^o\bm q - \bm q\|_{\bm L^{s_j} (K)}\\
	&\le Ch_{K}^{k+1+\frac{d}{s_j} - \frac{d}{\rho}}|\bm{q}|_{\bm W^{k+1,\rho}(K)}&\textup{by }\eqref{Pro_jec_1}.
	\end{align*}
	where in the last inequality we  used \eqref{Pro_jec_1} and  \eqref{Pro_jec_2}. In the proof of \cite[Proposition A.3.]{Cockburn_Gopalakrishnan_Sayas_Porjection_MathComp_2010} we find that  
	\begin{align}\label{def3}
	\|\bm B_V \bm q - \bm \Pi_V\bm q\|_{\bm L^{2} (K)} \le C\left(h_K^{1/2}\|u - \Pi_k^o u\|_{ L^{2} (K)} +  \|\Pi_W u - \Pi_k^o u\|_{ L^{2} (K)}\right).
	\end{align}
	Then, again using the local inverse estimate in \Cref{Inverse_inequality},
	\begin{align*}
	\|\bm \Pi_V \bm q - \bm q\|_{\bm L^{s_j} (K)}&\le  \|\bm B_V \bm q - \bm \Pi_V\bm q\|_{\bm L^{s_j} (K)} + \|   \bm B_V\bm q  - \bm q   \|_{\bm L^{s_j} (K)}\\
	&\le Ch_K^{\frac{d}{s_j} -\frac{d}{2}} \|\bm B_V \bm q - \bm\Pi_V \bm q\|_{\bm L^{2} (K)} + \| \bm B_V\bm q - \bm q\|_{\bm L^{s_j} (K)}&\textup{by }\eqref{inverse}\\
	&\le  Ch_K^{\frac{d}{s_j} -\frac{d}{2}}\left(h_K^{1/2}\|u - \Pi_k^o u\|_{ L^{2} (\partial K)} +  \|\Pi_W u - \Pi_k^o u\|_{ L^{2} (K)}\right)&\textup{by }\eqref{def3}\\
	&\quad  + \| \bm B_V\bm q - \bm q\|_{\bm L^{s_j} (K)}\\
	&\le Ch_{K}^{k+1+\frac{d}{s_j} - \frac{d}{\rho}}|\bm{q}|_{W^{k+1,\rho}(K)} + Ch_{K}^{k+1+\frac{d}{s_j} - \frac{d}{\rho}}|{u}|_{W^{k+1,\rho}(K)},
	\end{align*}
	where in the last inequality we split $\Pi_W u - \Pi_k^o u = \Pi_W u -  u + u-\Pi_k^o u$ and used \eqref{Pro_jec_1}, \eqref{Pro_jec_2} and \eqref{error_pi_q1}.
\end{proof}

\section{$L^\infty$ norm estimates}\label{Linftyestimates}
In the rest of this paper, we restrict the domain $\Omega$ to two dimensional space, i.e., $d=2$. Furthermore, we assume:
\begin{itemize}
	\item[(\bf{A})] The domain is convex and the triangular mesh $\mathcal T_h$ is quasi-uniform.
\end{itemize}

Now, we  state the main result of our paper:
\begin{theorem}\label{main_result_Linfty_norm}
	Let $(\bm q, u)$ and $(\bm q_h, u_h,\widehat u_h)$ be the solution of \eqref{Poisson} and \eqref{HDG_discrete2}, respectively.  We assume that (A) holds.   First, if  $u\in L^\infty(\Omega)$, $\bm q\in \bm L^\infty(\Omega)$ and $f\in L^2(\Omega)$, then we have the following stability bounds:
	\begin{subequations}\label{maxmimu_norm}
		\begin{align}
		\|u_h\|_{L^\infty(\Omega)} &\le   \|u\|_{L^\infty(\Omega)}   + Ch^{\min\{k,1\}}\|f\|_{L^2(\Omega)} &\textup{ for all } k\ge 0, \label{maxmimu_norm_u}\\
		\|\bm q_h\|_{L^\infty(\Omega)} &\le   \|\bm q\|_{\bm L^\infty(\Omega)}  + C\|f\|_{L^2(\Omega)}&\textup{ for all } k\ge 1.\label{maxmimu_norm_q}
		\end{align}	
		Second, if $(\bm q, u)\in \bm W^{k+1,\infty}(\Omega) \times  W^{k+1,\infty}(\Omega)$, then we have the following error estimates:
		\begin{align}
		\|\bm q- \bm q_h\|_{\bm L^\infty(\Omega)} &\le Ch^{k+1}(|\log h |^{1/2} +1 )(|\bm q|_{\bm W^{k+1,\infty}(\Omega)} + |u|_{ W^{k+1,\infty}(\Omega)}),&\textup{ for all } k\ge 1,\label{thmL_infty_q}\\
		\|u- u_h\|_{L^\infty(\Omega)} &\le Ch^{k+1} (|\log h | +1 )(|\bm q|_{\bm W^{k+1,\infty}(\Omega)} + |u|_{ W^{k+1,\infty}(\Omega)})&\textup{ for all } k\ge 1\label{thmL_infty_u}.
		\end{align}
		Furthermore, if $(\bm q, u)\in \bm W^{k+1,\infty}(\Omega) \times  W^{k+2,\infty}(\Omega)$, then we have the following error estimate for the postprocessed solution: 
		\begin{align}\label{thmL_infty_ustar}
		\|u - u_{h}^{\star}\|_{L^\infty(\Omega)} \le C  h^{k+2}(1+|\log h|) (|\bm q|_{\bm W^{k+1,\infty}(\Omega)} + |u|_{ W^{k+2,\infty}(\Omega)}) &\textup{ for all } k\ge 1,
		\end{align}
		where $u_h^\star $ was defined in \eqref{post_process_1}.
	\end{subequations}
\end{theorem}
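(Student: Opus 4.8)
The plan is to prove every estimate in \Cref{main_result_Linfty_norm} by a duality argument built on a regularized Green's function, in the spirit of the weighted-norm technique for mixed methods, exploiting the self-adjointness recorded in \Cref{eq_B}. To control $(u-u_h)(z_0)$ at a point $z_0$ where the error is essentially maximized, I would first introduce a smooth regularized Dirac mass $\delta^{z_0}$ of width comparable to $h$, normalized so that $(w_h,\delta^{z_0})_{\mathcal T_h}$ reproduces $w_h(z_0)$ up to controllable error for discrete $w_h$. I then pass to the adjoint problem $c\bm\Phi+\nabla\Psi=0$, $\nabla\cdot\bm\Phi=\delta^{z_0}$ in $\Omega$, $\Psi=0$ on $\partial\Omega$. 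Because the continuous problem is self-adjoint, this dual problem has exactly the form of \Cref{Poisson}, so its HDG approximation $(\bm\Phi_h,\Psi_h,\widehat\Psi_h)$ satisfies \eqref{HDG_full_discrete} with data $\delta^{z_0}$, and the machinery of \Cref{pro_error} and \Cref{general_lp_projection_error} applies verbatim to it.

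Next, combining the Galerkin orthogonality obtained by subtracting \eqref{HDG_full_discrete} from \eqref{HDG_exact} with the symmetry in \Cref{eq_B}, I would derive the error representation that writes $(u-u_h)(z_0)$, up to the regularization error, as $\mathscr B$ evaluated on the primal error triple $(\bm q-\bm q_h,\,u-u_h,\,u-\widehat u_h)$ against the dual error triple $(\bm\Phi-\bm\Phi_h,\,\Psi-\Psi_h,\,\Psi-\widehat\Psi_h)$. Inserting the HDG projection $\Pi_h$ into both the primal and the dual factors allows the volume and trace terms to be handled through \eqref{projection_operator_1}--\eqref{projection_operator_3}, and the surviving pairings are then estimated by H\"older's inequality: the primal factors are measured in $L^\infty$ (i.e.\ $\rho=\infty$) by \Cref{general_lp_projection_error}, supplying the powers $h^{k+1}$, while the dual factors are measured in $L^1$ (i.e.\ against $\rho=1$ on the Green's function) by \Cref{general_lp_projection_error} and \Cref{pro_error}. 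The logarithmic factors $|\log h|$ in \eqref{thmL_infty_u} and $|\log h|^{1/2}$ in \eqref{thmL_infty_q} arise precisely from the $L^1$ and $W^{1,1}$ norms of $(\bm\Phi,\Psi)$ and their second derivatives, whose sharp behavior on the convex polygon $\Omega$ is guaranteed by assumption (A). The flux estimate \eqref{thmL_infty_q} follows from the same scheme with adjoint data $\delta^{z_0}\bm e_i$ placed in the constitutive (first) equation of the dual problem, which is responsible for the weaker $|\log h|^{1/2}$ singularity.

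For the stability bounds \eqref{maxmimu_norm_u}--\eqref{maxmimu_norm_q}, which assume only $u,\bm q\in L^\infty$ and $f\in L^2$, I would run the identical duality but track the data through $L^2$ rather than high-order seminorms of $(u,\bm q)$: writing $u_h(z_0)=(u_h,\delta^{z_0})_{\mathcal T_h}$ and testing the discrete equations against the discrete dual Green's function, the leading term reproduces $\|u\|_{L^\infty(\Omega)}$, and the contribution of $f$ is bounded using the low-order $L^2$ approximation of the now only $H^2$-regular primal solution on the convex domain, producing the factor $h^{\min\{k,1\}}\|f\|_{L^2(\Omega)}$ (and the analogous, $h$-free, bound for the less regular flux). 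For the postprocessed estimate \eqref{thmL_infty_ustar}, I would work elementwise from \eqref{post_process_1}: the gradient condition \eqref{post_process_1_a} ties $\nabla u_h^\star$ to $c\bm q_h$, so on each $K$ the error $u-u_h^\star$ is governed by the flux error (already available at rate $h^{k+1}|\log h|^{1/2}$ and upgraded to $h^{k+2}$ by the extra $W^{k+2,\infty}$ regularity of $u$) together with the mean value error controlled by \eqref{thmL_infty_u}; a local inverse inequality (\Cref{Inverse_inequality}) and the $L^\infty$ approximation of \Cref{general_lp_projection_error} then convert these into the stated $h^{k+2}(1+|\log h|)$ bound.

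The hard part will be twofold. First, establishing the regularity and the $h$-uniform (up to logarithms) norm bounds for the regularized Green's function $(\bm\Phi,\Psi)$ on a convex polygon is where assumption (A) is indispensable and where the exact power of $|\log h|$ is pinned down; obtaining $|\log h|^{1/2}$ for the flux and $|\log h|$ for the scalar, rather than cruder powers, demands careful tracking of the regularization and cut-off scales. Second, because of the hybrid structure the error representation carries the numerical-trace and stabilization contributions $\langle\tau(\cdot-\widehat{\cdot}),\cdot-\widehat{\cdot}\rangle_{\partial\mathcal T_h}$, and I must verify that inserting $\Pi_h$ annihilates them through \eqref{projection_operator_3} so that no uncontrolled boundary terms survive. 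This closure step, which has no counterpart in the classical mixed-method analysis, is the genuinely new technical point for HDG and is where I expect the main difficulty to lie.
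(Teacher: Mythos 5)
Your overall architecture matches the paper's: regularized Green's functions (one with the smoothed Dirac mass in the divergence equation for the scalar, one with a vector mass in the constitutive equation for the flux), a duality identity obtained from the symmetry of $\mathscr B$ (\Cref{eq_B}) and consistency \eqref{HDG_exact}, insertion of the HDG projection so that \eqref{projection_operator_1}--\eqref{projection_operator_3} annihilate the trace and stabilization terms, an $L^\infty\times L^1$ H\"older splitting of the surviving volume pairing, and an elementwise inverse-inequality argument for the postprocessed solution. The error representation you describe is, after the cancellations you correctly anticipate, exactly the paper's \Cref{main_result_for_L_infty}, and the closure step you flag as the ``genuinely new technical point'' does go through as you hope.

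However, there is one genuine gap. You propose to bound the dual factors in $L^1$ ``by \Cref{general_lp_projection_error} and \Cref{pro_error}'', i.e.\ by projection approximation estimates. Those lemmas control only $\bm\Phi-\bm\Pi_V\bm\Phi$; what the error representation actually requires is the $L^1(\Omega)$ norm of the \emph{discrete} Green's function error $\bm\Phi_{1}-\bm\Phi_{1,h}$ (and $\bm\Phi_2-\bm\Phi_{2,h}$). Because $\|D^2\Psi_1\|_{L^2(\Omega)}\sim h^{-1}$, the standard unweighted $L^2$ error analysis only yields $\|\bm\Phi_1-\bm\Phi_{1,h}\|_{L^2(\Omega)}\le C$, and Cauchy--Schwarz then gives $\|\bm\Phi_1-\bm\Phi_{1,h}\|_{L^1(\Omega)}\le C$ --- a full factor of $h$ worse than the $Ch(|\log h|+1)$ needed to reach $h^{k+1}$ (indeed $h^{k+2}$ for $\Pi_W u-u_h$). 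The paper closes this gap with a weighted energy argument: it tests the error equation \eqref{error_equa_Phi} against $\sigma^2$ times the error, with $\sigma=(|\bm x-\bm x_0|^2+h^2)^{1/2}$, controls the resulting commutators with the projections via \Cref{L2_noundness_sigma,key_in}, invokes the weighted regularity $\|\sigma D^2\Psi_1\|_{L^2(\Omega)}\le C|\log h|^{1/2}$, and only then converts to $L^1$ using $\|\sigma^{-1}\|_{L^2(\Omega)}\le C|\log h|^{1/2}$ (this is \Cref{weight_err_thm,dual_L11}). You allude to the weighted-norm technique ``in spirit'' and correctly identify the Green's function bounds as the hard part, but your proof as written routes the $L^1$ dual estimate through lemmas that cannot deliver it; without the weighted step the argument loses a power of $h$ and the claimed rates fail.
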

The remainder of this section will be devoted to proving the above result.

\subsection{Proof of \Cref{main_result_Linfty_norm}} 
We start the proof of \Cref{main_result_Linfty_norm} by defining suitable regularized Green's functions.
We follow the notation of Girault, Nochetto and Scott \cite{Scott_JMPA_2005} to define by $\delta_\star\ge 0$ the usual mollifier in $\mathcal D(\mathbb R^2)$ such that $\textup{supp}(\delta_\star)\subset B(0,1)$ and $\int_{\mathbb R^2} \delta_\star(x) {\rm d}\bm x=1$. Then for any point $\bm x_0\in \Omega$ and real number $\rho_0>0$ such that the ball $B(\bm x_0, \rho_0)$ is contained in $\Omega$, we define the mollifier by
\begin{align}
{\delta(\bm x)} = \frac{1}{\rho_0^2}\delta_\star\left(\frac{\bm x - \bm x_0}{\rho_0}\right).
\end{align}

\begin{lemma}[{\rm\cite[Lemma 1.1]{Scott_JMPA_2005}}]\label{delta_function_1}
	Suppose the triangular mesh $\mathcal T_h$ is quasi-uniform. Let  $\varphi_h$ be a polynomial in $\mathcal P^k$ on each $K$,  $\bm x_M$ be a point of $\bar\Omega$ where $|\varphi_h(\bm x)|$ attains its maximum,  $K$ be an element containing $\bm x_M$ and  $B \subset K$ be the {disk of} radius $\rho_K$ inscribed in $K$. Then there exists a smooth function $\delta_M$ supported in $B$ such that
	\begin{subequations}
		\begin{gather}
		\int_{\Omega} \delta_M ~{\rm d}\bm x = 1,\label{delta_function_2}\\
		\|\varphi_h\|_{L^\infty(\Omega)} = \left|\int_B\delta_M\varphi_h~{\rm d}\bm x\right|,\label{delta_function_3}
		\end{gather}
		and for any number $t$ with $1<t\le \infty$, there exists a constant $C$, such that
		\begin{align}
		\|\delta_M\|_{L^t(B)}&\le h^{2/t-2},\label{delta_function_4}\\
		\|\nabla \delta_M\|_{L^t(B)}&\le h^{2/t-3}.\label{delta_function_5}
		\end{align}
	\end{subequations}
\end{lemma}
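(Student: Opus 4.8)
The plan is to reduce everything to a fixed reference configuration by rescaling the inscribed disk $B$ to the unit disk, to construct the desired weight there by a duality (Riesz representation) argument on the finite-dimensional polynomial space, and then to transport the construction back to $B$ by a change of variables. First I would record the geometric data. Write $B = B(\bm x_B, \rho_K)$ for the disk of radius $\rho_K$ inscribed in $K$, and introduce the affine rescaling $\hat{\bm x} = (\bm x - \bm x_B)/\rho_K$, which carries $B$ onto the reference disk $\hat B := B(0,1)$. Because the mesh is quasi-uniform, the element $K$ is shape-regular, so $h_K/\rho_K \le \sigma$ for a fixed constant $\sigma$ and $\rho_K \simeq h$; consequently the image $\hat{\bm x}_M := (\bm x_M - \bm x_B)/\rho_K$ of the maximizing point lies in a fixed ball $\overline{B(0,R)}$ with $R$ depending only on $\sigma$. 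This is the key reduction: although $\bm x_M$ need not lie in $B$ itself, its rescaled position ranges over a fixed compact set independent of $h$ and of the element.

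On the reference disk I would build $\hat\delta$ as follows. Fix once and for all a smooth function $\psi \ge 0$ supported in $\hat B$, positive on a nonempty open subset, with $\int_{\hat B}\psi\,{\rm d}\hat{\bm x}=1$, and define the bilinear form $a(p,q):=\int_{\hat B}\psi\,p\,q\,{\rm d}\hat{\bm x}$ on $\mathcal P^k$. Since a degree-$k$ polynomial vanishing on an open set is identically zero, $a$ is an inner product on the finite-dimensional space $\mathcal P^k$. For the fixed target point $\hat{\bm x}_M$, point evaluation $p\mapsto p(\hat{\bm x}_M)$ is a linear functional on $\mathcal P^k$, so by Riesz representation there is a unique $r\in\mathcal P^k$ with $a(r,p)=p(\hat{\bm x}_M)$ for all $p\in\mathcal P^k$. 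I then set $\hat\delta := \psi\,r$. This function is smooth and supported in $\hat B$; testing against $p\in\mathcal P^k$ gives $\int_{\hat B}\hat\delta\,p\,{\rm d}\hat{\bm x}=a(r,p)=p(\hat{\bm x}_M)$, and the choice $p\equiv 1$ (admissible since $k\ge 0$) yields $\int_{\hat B}\hat\delta\,{\rm d}\hat{\bm x}=1$.

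The uniform bounds come from a compactness argument: the Gram matrix of $a$ is fixed, while the functional $p\mapsto p(\hat{\bm x}_M)$ depends continuously (indeed polynomially) on $\hat{\bm x}_M$, so $r=r_{\hat{\bm x}_M}$ depends continuously on $\hat{\bm x}_M$ and, since $\overline{B(0,R)}$ is compact and all norms on $\mathcal P^k$ are equivalent, $\|\hat\delta\|_{W^{1,\infty}(\hat B)}\le C$ with $C$ independent of $\hat{\bm x}_M$. In particular $\|\hat\delta\|_{L^t(\hat B)}+\|\nabla\hat\delta\|_{L^t(\hat B)}\le C$ for every $t\in(1,\infty]$. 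Finally I would transport back by setting $\delta_M(\bm x):=\rho_K^{-2}\hat\delta((\bm x-\bm x_B)/\rho_K)$ and verify each claim by the change of variables $\bm x = \bm x_B+\rho_K\hat{\bm x}$: the Jacobian $\rho_K^2$ cancels the prefactor to give \eqref{delta_function_2}; writing $\hat\varphi(\hat{\bm x}):=\varphi_h(\bm x_B+\rho_K\hat{\bm x})\in\mathcal P^k$ gives $\int_B\delta_M\varphi_h\,{\rm d}\bm x=\hat\varphi(\hat{\bm x}_M)=\varphi_h(\bm x_M)$, whose modulus equals $\|\varphi_h\|_{L^\infty(\Omega)}$, which is \eqref{delta_function_3}; and scaling the $L^t$ norms produces the factors $\rho_K^{2/t-2}$ and $\rho_K^{2/t-3}$ for $\delta_M$ and $\nabla\delta_M$ respectively, yielding \eqref{delta_function_4} and \eqref{delta_function_5} after using $\rho_K\simeq h$ (with the constant $C$ absorbed as in the statement). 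The main obstacle — and the reason a plain mollifier centered at $\bm x_M$ does not suffice — is precisely that $\bm x_M$ may lie outside $B$: one must represent point evaluation at a displaced point by integration over the disk, and the real content of the lemma is that this representation, and hence the resulting $L^t$ bounds, can be made uniform as the displaced point ranges over the fixed compact set identified in the first step.
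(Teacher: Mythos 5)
Your proof is correct, but it is organized quite differently from what the paper actually writes down. The paper does not reprove the construction at all: it cites \cite[Lemma 1.1]{Scott_JMPA_2005} for \eqref{delta_function_2}--\eqref{delta_function_4} together with the structural fact that $\delta_M=\delta P_M$ for some polynomial $P_M\in\mathcal P^k(K)$, where $\delta$ is the rescaled mollifier supported in $B$; the only statement actually proved in the paper is the new gradient estimate \eqref{delta_function_5}, obtained from the product rule $\nabla\delta_M=P_M\nabla\delta+\delta\nabla P_M$, the bounds $\|\delta\|_{L^\infty(\mathbb R^2)}\le C\rho_K^{-2}$ and $\|\nabla\delta\|_{L^\infty(\mathbb R^2)}\le C\rho_K^{-3}$, and the inverse inequality \eqref{inverse} applied to $P_M$. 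Your argument, by contrast, is self-contained: the weighted Riesz-representation construction $\hat\delta=\psi r$ on the reference disk is in essence the same duality construction that underlies the cited lemma (your $\psi r$ is their $\delta P_M$ after rescaling to the unit disk), and your key observation --- that the rescaled maximizer $\hat{\bm x}_M$ ranges over a fixed compact set by shape regularity, so that $\|\hat\delta\|_{W^{1,\infty}(\hat B)}$ is bounded uniformly by finite dimensionality, continuity of the representer in $\hat{\bm x}_M$, and compactness --- replaces both the citation and the paper's inverse-inequality computation, since all four properties, including the gradient bound, then follow from a single change of variables. What your route buys is independence from the reference and a unified treatment of \eqref{delta_function_4} and \eqref{delta_function_5} (the scaling argument makes it transparent why the gradient costs exactly one extra power of $h$); what the paper's route buys is brevity, at the price of importing the decomposition $\delta_M=\delta P_M$ and the bounds on $P_M$ from outside and then redoing a product-rule estimate on the physical element.
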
	
\begin{proof}
	The proof of \eqref{delta_function_2}-\eqref{delta_function_4} can be found in \cite[Lemma 1.1]{Scott_JMPA_2005} where it is shown that  there exists polynomial $P_M\in \mathcal P^k(K)$ such that 
	\begin{align*}
	\delta_M = \delta  P_M,
	\end{align*}
	Since  $\|\delta\|_{L^\infty(\mathbb  R^2)}\le C/\rho_K^2$ and $\|\nabla \delta\|_{L^\infty(\mathbb  R^2)}\le C/\rho_K^3$,  by \eqref{inverse} and the assumption that the triangle mesh is quasi-uniform,  we have 
	\begin{align*}
	\|\nabla \delta_M\|_{L^t(B)} & = 	\|P_M\nabla\delta  + \delta\nabla P_M\|_{L^t(B)}\\
	& \le \|\nabla\delta\|_{L^\infty(B)} \|P_M\|_{L^t(B)}+\|\delta\|_{L^\infty(B)} \|\nabla P_M\|_{L^t(B)}\\
	&\le Ch^{2/t-3}.
	\end{align*}
\end{proof}

The main idea behind the proof of $L^\infty$ norm estimates is to use the so called smooth $\delta_M$ function, which was described in \Cref{delta_function_1}. Given a scalar function $\delta_1$ and a vector $\bm \delta_2$ of the above type,  we define two regularized Green's functions for problem \eqref{Poisson} in mixed form:
\begin{equation}\label{Green1}
\begin{split}
c\bm \Phi_1+\nabla \Psi_1&= 0\qquad\qquad\text{in}\ \Omega,\\
\nabla\cdot\bm  \Phi_1&= \delta_1\qquad\quad~~\text{in}\ \Omega,\\
\Psi_1 &= 0\qquad\qquad\text{on}\ \partial\Omega,
\end{split}
\end{equation}
and
\begin{equation}\label{Green2}
\begin{split}
c\bm \Phi_2+\nabla \Psi_2&= \bm \delta_2\qquad\qquad\text{in}\ \Omega,\\
\nabla\cdot\bm  \Phi_2&= 0\qquad\qquad~\text{in}\ \Omega,\\
\Psi_2 &= 0\qquad\qquad~\text{on}\ \partial\Omega.
\end{split}
\end{equation}

{We need two auxiliary results before starting the proof of  \Cref{main_result_Linfty_norm}.  The first concerns bounds on the regularized Green's function $(\Psi_1, \Psi_2)$:}
\begin{lemma}\label{regularity_Green_function}
	Let $\Psi_1$ and $\Psi_2$ be the solution of \eqref{Green1} and \eqref{Green2}, respectively. If assumption (A) holds, then we have:
	\begin{subequations}
		\begin{gather}
		\| D^2 \Psi_1\|_{L^2(\Omega)} \le Ch^{-1}, \;  \|\sigma D^2 \Psi_1\|_{L^2(\Omega)} \le C|\log h|^{1/2}, \; \|D^2\Psi_1\|_{L^1(\Omega)} \le C|\log h|,\label{regu_delta_1}\\
		\|D^2 \Psi_2\|_{L^2(\Omega)} \le Ch^{-2}, \;  \|\sigma D^2 \Psi_2\|_{L^2(\Omega)} \le Ch^{-1}, \; \|D^2\Psi_2\|_{L^1(\Omega)} \le Ch^{-1}|\log h|^{1/2}.\label{regu_delta_2}
		\end{gather}
	\end{subequations}
\end{lemma}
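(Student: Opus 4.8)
The plan is to eliminate the flux variable in each problem so as to reduce \eqref{Green1} and \eqref{Green2} to scalar second-order elliptic equations for $\Psi_1$ and $\Psi_2$, and then to read off the three estimates from elliptic regularity together with the mollifier bounds in \Cref{delta_function_1}. Writing $a := c^{-1}$, which lies in $(W^{1,\infty}(\Omega))^{2\times 2}$ and is uniformly symmetric positive definite, the first equation of \eqref{Green1} gives $\bm\Phi_1 = -a\nabla\Psi_1$, so that $\Psi_1$ solves $-\nabla\cdot(a\nabla\Psi_1)=\delta_1$ in $\Omega$ with $\Psi_1=0$ on $\partial\Omega$; similarly \eqref{Green2} yields $-\nabla\cdot(a\nabla\Psi_2)=-\nabla\cdot(a\bm\delta_2)$ with homogeneous Dirichlet data. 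Both are uniformly elliptic with Lipschitz coefficients, and since assumption (A) makes $\Omega$ convex we have full $H^2$ regularity, i.e.\ $\|D^2 v\|_{L^2(\Omega)}\le C\|\nabla\cdot(a\nabla v)\|_{L^2(\Omega)}$ for $v\in H^2(\Omega)\cap H^1_0(\Omega)$.

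For the unweighted $L^2$ bounds I would simply combine this regularity with \eqref{delta_function_4}--\eqref{delta_function_5}. Taking $t=2$ in \eqref{delta_function_4} gives $\|\delta_1\|_{L^2(\Omega)}\le Ch^{-1}$, whence $\|D^2\Psi_1\|_{L^2(\Omega)}\le C\|\delta_1\|_{L^2(\Omega)}\le Ch^{-1}$, the first bound in \eqref{regu_delta_1}. For $\Psi_2$, because $a\in W^{1,\infty}$ the right-hand side obeys $\|\nabla\cdot(a\bm\delta_2)\|_{L^2(\Omega)}\le C(\|\bm\delta_2\|_{L^2(\Omega)}+\|\nabla\bm\delta_2\|_{L^2(\Omega)})$, and \eqref{delta_function_4}--\eqref{delta_function_5} with $t=2$ bound this by $C(h^{-1}+h^{-2})\le Ch^{-2}$, giving $\|D^2\Psi_2\|_{L^2(\Omega)}\le Ch^{-2}$, the first bound in \eqref{regu_delta_2}.

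The heart of the matter is the weighted bounds $\|\sigma D^2\Psi_i\|_{L^2(\Omega)}$, and this is where I expect the real difficulty. Here I would adapt the Girault--Nochetto--Scott weighted-norm machinery of \cite{Scott_JMPA_2005}, using sharp pointwise estimates for the Green's function of $-\nabla\cdot(a\nabla\,\cdot\,)$ on the convex domain, together with the fact that $\delta_1,\bm\delta_2$ are supported in a disk of radius $\sim h$ about $\bm x_0$. Heuristically $D^2\Psi_1\sim|\bm x-\bm x_0|^{-2}$ and $D^2\Psi_2\sim|\bm x-\bm x_0|^{-3}$ away from that disk, while $\sigma\sim|\bm x-\bm x_0|$; integrating $\sigma^2|D^2\Psi_1|^2$ over the annulus $h\lesssim|\bm x-\bm x_0|\lesssim 1$ produces $\int_h^{O(1)}r^{-1}\,{\rm d}r\sim|\log h|$, so that $\|\sigma D^2\Psi_1\|_{L^2(\Omega)}\le C|\log h|^{1/2}$, whereas the same computation for $\Psi_2$ gives $\int_h^{O(1)}r^{-3}\,{\rm d}r\sim h^{-2}$, i.e.\ $\|\sigma D^2\Psi_2\|_{L^2(\Omega)}\le Ch^{-1}$ (the contribution from inside the mollifier disk, where $D^2\Psi_i$ is controlled by the regularity bounds above, is lower order). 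Making this rigorous on a convex polygon---controlling the corner contributions and the interaction of the weight with the mollified singularity---is the main obstacle, and is precisely the step that forces assumption (A).

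Finally, the $L^1$ bounds follow at once by Cauchy--Schwarz once the weighted estimates are in hand. With the standard choice $\sigma(\bm x)=(|\bm x-\bm x_0|^2+(\kappa h)^2)^{1/2}$ one computes $\|\sigma^{-1}\|_{L^2(\Omega)}^2=\int_\Omega(|\bm x-\bm x_0|^2+(\kappa h)^2)^{-1}\,{\rm d}\bm x\le C|\log h|$, so that $\|D^2\Psi_i\|_{L^1(\Omega)}\le\|\sigma^{-1}\|_{L^2(\Omega)}\,\|\sigma D^2\Psi_i\|_{L^2(\Omega)}\le C|\log h|^{1/2}\,\|\sigma D^2\Psi_i\|_{L^2(\Omega)}$. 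This yields $\|D^2\Psi_1\|_{L^1(\Omega)}\le C|\log h|$ and $\|D^2\Psi_2\|_{L^1(\Omega)}\le Ch^{-1}|\log h|^{1/2}$, completing the remaining estimates in \eqref{regu_delta_1} and \eqref{regu_delta_2}.
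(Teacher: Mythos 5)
Your reduction of \eqref{Green1} and \eqref{Green2} to scalar elliptic problems, the unweighted $L^2$ bounds via convexity and \eqref{delta_function_4}--\eqref{delta_function_5}, and the final Cauchy--Schwarz step $\|D^2\Psi_i\|_{L^1(\Omega)}\le\|\sigma^{-1}\|_{L^2(\Omega)}\|\sigma D^2\Psi_i\|_{L^2(\Omega)}$ combined with \eqref{sigma_lemma_2_constant} are all correct; indeed the bound $\|D^2\Psi_2\|_{L^2(\Omega)}\le C\|\nabla\cdot\bm\delta_2\|_{L^2(\Omega)}\le Ch^{-2}$ is exactly the one step the paper proves from scratch, and your $L^1$ derivation makes explicit a step the paper leaves implicit.

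The genuine gap is the pair of weighted estimates $\|\sigma D^2\Psi_1\|_{L^2(\Omega)}\le C|\log h|^{1/2}$ and $\|\sigma D^2\Psi_2\|_{L^2(\Omega)}\le Ch^{-1}$, which you yourself identify as ``the heart of the matter'' and then support only by a scaling heuristic ($D^2\Psi_1\sim|\bm x-\bm x_0|^{-2}$, $D^2\Psi_2\sim|\bm x-\bm x_0|^{-3}$ away from the mollifier disk). That heuristic predicts the right answer, but it is not a proof: you would need rigorous pointwise bounds on second derivatives of the Green's matrix of $-\nabla\cdot(a\nabla\cdot)$ on a convex \emph{polygon} with merely Lipschitz coefficients, uniform up to the corners, and you give no argument for them. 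The paper does not attempt this either; it closes the gap by citation, pointing to \cite[equation (3.6)]{wang1989asymptotic} for \eqref{regu_delta_1} and to \cite[Lemma 3.2]{Duran_M2AN_1988} together with \cite[equation (3.12d)]{wang1989asymptotic} for the remaining parts of \eqref{regu_delta_2}, where precisely these weighted a priori estimates are established (by weighted-norm energy arguments rather than pointwise Green's function asymptotics). So your plan is structurally the same as the literature the paper relies on, but as written the central estimates are asserted, not proved; to complete the argument you should either invoke those specific results or carry out the weighted a priori estimate (multiply the equation by $\sigma^2$ times suitable second differences, or follow the dyadic-annulus decomposition of \cite{Scott_JMPA_2005}) in full.
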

\begin{proof}
	The proof of \eqref{regu_delta_1} can be found in \cite[equation (3.6)]{wang1989asymptotic}. By the elliptic regularity of Poisson problem and \eqref{delta_function_5} we have 
	\begin{align*}
	\|D^2 \Psi_2\|_{L^2(\Omega)} \le C\|\nabla\cdot \bm \delta_2\|_{L^2(\Omega)} \le Ch^{-2}.
	\end{align*}
	The remaining estimations in \eqref{regu_delta_2} can be found in \cite[Lemma 3.2]{Duran_M2AN_1988} and \cite[equation (3.12d)]{wang1989asymptotic}.
\end{proof}

Next,  we define the weight $\sigma$ by:
\begin{align} \label{def_sigma} 
\sigma(\bm x) = (|\bm x - \bm x_0|^2 +    {h^2})^{1/2},
\end{align}
where  $\bm x_0$ is a point close to that where the {relevant} maximum is attained (the center of the inscribed circle in the 
triangle containing the maximum).
{In the second auxiliary lemma} we summarize some properties of the function $\sigma$,  which will be use later.
\begin{lemma}[{\cite[Equation (2.13)]{wang1989asymptotic}}]
	For any $\alpha\in \mathbb R$ there is a constant $C$ independent of $\alpha$ such that the function $\sigma$ has the following  properties:
	\begin{subequations}
		\begin{gather}
		\frac{\max_{\bm x\in K} \sigma(\bm x)^\alpha}{\min_{\bm x\in K} \sigma(\bm x)^\alpha}  \le C , \forall K\in \mathcal T_h, \label{pro_sigma_1}\\
		\left|\nabla^k (\sigma(\bm x)^\alpha)\right|\le C \sigma(\bm x)^{\alpha-k},\label{pro_sigma_2}\\
		\int_{\Omega} \sigma (\bm x)^{-2} ~{\rm d}\bm x \le C|\log h|.\label{sigma_lemma_2_constant}
		\end{gather}
	\end{subequations}
	
\end{lemma}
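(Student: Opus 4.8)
The three claims are elementary pointwise and integral estimates for the explicit weight $\sigma(\bm x)=(|\bm x-\bm x_0|^2+h^2)^{1/2}$, and the plan is to exploit three structural facts: $\sigma$ is bounded below by $h$, it is bounded above by $\mathrm{diam}(\Omega)+h$ on $\bar\Omega$, and its gradient $\nabla\sigma=(\bm x-\bm x_0)/\sigma$ has modulus at most $1$, so $\sigma$ is $1$-Lipschitz. A convenient device for \eqref{pro_sigma_2} is the rescaling $\bm x-\bm x_0=h\bm y$, under which $\sigma=h\,\tilde\sigma(\bm y)$ with the mesh-independent reference weight $\tilde\sigma(\bm y)=(1+|\bm y|^2)^{1/2}$. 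Throughout, the constant $C$ is independent of $h$, $\bm x_0$, and $K$; I note at the outset that the $k=1$ identity $|\nabla\sigma^\alpha|=|\alpha|\,\sigma^{\alpha-1}$ shows the constant does grow with $|\alpha|$, so I read ``independent of $\alpha$'' as ``independent of the mesh'' for each fixed exponent, all of which lie in a fixed bounded range in the later applications.

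For \eqref{pro_sigma_1} I would first bound the oscillation of $\sigma$ on a single element. By quasi-uniformity $\mathrm{diam}(K)\le Ch$, so for $\bm x,\bm x'\in K$ the Lipschitz bound gives $\sigma(\bm x)\le\sigma(\bm x')+Ch$; dividing by $\sigma(\bm x')\ge h$ yields $\sigma(\bm x)\le(1+C)\,\sigma(\bm x')$ and hence $\max_K\sigma\le(1+C)\min_K\sigma$. Raising this to the power $\alpha$ when $\alpha\ge0$, or to $-\alpha$ when $\alpha<0$ (since $\sigma^\alpha$ is then decreasing in $\sigma$), produces $\max_K\sigma^\alpha\le(1+C)^{|\alpha|}\min_K\sigma^\alpha$, which is the claim.

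For \eqref{pro_sigma_2} I would pass to the reference variable: each Cartesian derivative in $\bm x$ equals $h^{-1}$ times a derivative in $\bm y$, so $|\nabla_{\bm x}^k\sigma^\alpha|=h^{\alpha-k}\,|\nabla_{\bm y}^k\tilde\sigma^\alpha|$, and it suffices to prove the universal bound $|\nabla_{\bm y}^k\tilde\sigma^\alpha|\le C\,\tilde\sigma^{\alpha-k}$ for the smooth reference weight. This I would establish by induction on $k$: the case $k=0$ is trivial, and since $\nabla\tilde\sigma=\bm y/\tilde\sigma$ with $|\bm y|\le\tilde\sigma$, differentiating any term of the form $(\text{polynomial of degree }m)\cdot\tilde\sigma^{\beta}$ again produces terms of the same shape whose net homogeneity drops by one, each controlled by $\tilde\sigma^{\alpha-(k+1)}$ because a degree-$m$ polynomial is bounded by $C\tilde\sigma^{m}$. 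Substituting back and using $h^{\alpha-k}\tilde\sigma^{\alpha-k}=\sigma^{\alpha-k}$ gives \eqref{pro_sigma_2}.

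Finally, for \eqref{sigma_lemma_2_constant} I would enclose $\Omega\subset B(\bm x_0,R)$ with $R=\mathrm{diam}(\Omega)$ (possible because $\bm x_0\in\bar\Omega$) and integrate in polar coordinates centered at $\bm x_0$:
\begin{align*}
\int_{\Omega}\sigma^{-2}\,{\rm d}\bm x\le\int_{B(\bm x_0,R)}\frac{{\rm d}\bm x}{|\bm x-\bm x_0|^2+h^2}=2\pi\int_0^R\frac{r}{r^2+h^2}\,{\rm d}r=\pi\log\frac{R^2+h^2}{h^2}\le C|\log h|
\end{align*}
for $h$ sufficiently small. The only step requiring care is the induction in \eqref{pro_sigma_2}, where the bookkeeping of polynomial degrees against powers of $\tilde\sigma$ must be tracked for general $k$; for the values actually needed in the paper ($k\le2$, $|\alpha|\le2$) the bound can instead be verified by direct differentiation, so the obstacle is mild.
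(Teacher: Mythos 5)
Your proof is correct, but it is worth noting that the paper itself offers no argument for this lemma at all: it simply cites \cite[Equation (2.13)]{wang1989asymptotic}, so any self-contained verification is necessarily a ``different route.'' Your three arguments all check out. For \eqref{pro_sigma_1}, the combination of the $1$-Lipschitz bound $|\nabla\sigma|=|\bm x-\bm x_0|/\sigma\le 1$, the lower bound $\sigma\ge h$, and quasi-uniformity ($\mathrm{diam}(K)\le Ch$) gives $\max_K\sigma\le C\min_K\sigma$, and raising to the power $|\alpha|$ is the right way to handle both signs of $\alpha$. For \eqref{pro_sigma_2}, the rescaling $\sigma(\bm x)=h\,\tilde\sigma((\bm x-\bm x_0)/h)$ with $\tilde\sigma(\bm y)=(1+|\bm y|^2)^{1/2}$ cleanly removes $h$ from the problem, and your induction (every term of $\nabla^k\tilde\sigma^\alpha$ has the form $p_m(\bm y)\tilde\sigma^\beta$ with $m+\beta=\alpha-k$, and $|p_m(\bm y)|\le C\tilde\sigma^m$ because $\tilde\sigma\ge\max\{1,|\bm y|\}$) closes correctly; this is essentially the classical argument behind Wang's estimate. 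For \eqref{sigma_lemma_2_constant}, the polar-coordinate computation is exact in the two-dimensional setting to which the paper has restricted itself, and yields $\pi\log\bigl((R^2+h^2)/h^2\bigr)\le C|\log h|$ for $h$ bounded away from $1$. Your opening caveat is also a genuine contribution rather than a hedge: as stated, ``a constant $C$ independent of $\alpha$'' cannot hold literally, since already $|\nabla\sigma^\alpha|=|\alpha|\,\sigma^{\alpha-2}|\bm x-\bm x_0|$ forces linear growth in $|\alpha|$, and the worst-case element containing $\bm x_0$ forces growth like $C^{|\alpha|}$ in \eqref{pro_sigma_1}; reading the constant as mesh-independent for each fixed $\alpha$ (all exponents used in the paper lying in a fixed finite set) is the correct interpretation and matches the cited source. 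What your approach buys is a verifiable, elementary proof in place of an external citation; what it costs is length, which is presumably why the authors chose the citation.
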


We split the proof of \Cref{main_result_Linfty_norm} into four steps. First, we shall obtain the $L^1$ norm approximations error of the solution  of \eqref{Green1} and  \eqref{Green2}. Second, we prove the  $L^\infty$ norm stability of $\bm q_h$ and $u_h$. Next, we obtain the $L^\infty$ norm error estimates of $\bm q-\bm q_h$ and $u - u_h$. Finally, we obtain the $L^\infty$ norm error estimates of the postprocessed solution $u_h^\star$. 

\paragraph*{Step 1: $L^1$ norm error estimates for  the regularized Green's functions}																																																																														

Let $(\bm \Phi_{1,h}, \Psi_{1,h},\widehat \Psi_{1,h})$ and $(\bm \Phi_{2,h}, \Psi_{2,h}, \widehat \Psi_{2,h})$ be the HDG solution of \eqref{Green1} and \eqref{Green2}, respectively, i.e.,
\begin{subequations}
	\begin{align}
	\mathscr B(\bm \Phi_{1,h}, \Psi_{1,h}, \widehat{\Psi}_{1,h}; \bm v_h, w_h,\widehat w_h) & = (\delta_1, w_h)_{\mathcal T_h},\label{Green_HDG_1}\\
	\mathscr B(\bm \Phi_{2,h}, \Psi_{2,h}, \widehat{\Psi}_{2,h}; \bm v_h, w_h,\widehat w_h) & = ( \bm \delta_2, \bm v_h)_{\mathcal T_h}\label{Green_HDG_2}
	\end{align}
	for all $(\bm v_h, w_h, \widehat w_h)\in\bm{V}_h\times W_h\times \widehat W_h$. The existence and uniqueness of these solutions follow by standard HDG theory \cite{Cockburn_Gopalakrishnan_Lazarov_Unify_SINUM_2009}.
\end{subequations}	

Our goal in this step is to prove the upcoming \Cref{weight_err_thm}. To start we summarize some relevant results in the following:
\begin{subequations}
	\begin{gather}
	\mathscr B(\bm \Pi_V \bm \Phi_1, \Pi_W  \Psi_1, \Pi_k^\partial   \Psi_1; \bm v_h, w_h,\widehat w_h)  = (\bm \Pi_V \bm \Phi_1 - \bm \Phi_1, \bm v_h)_{\mathcal T_h} - (\delta_1, w_h)_{\mathcal T_h},\label{pro_error_equa_Phi}\\
	\mathscr B(\bm \Pi_V \bm \Phi_2, \Pi_W  \Psi_2, \Pi_k^\partial   \Psi_2; \bm v_h, w_h,\widehat w_h)  = (\bm \Pi_V \bm \Phi_2 - \bm \Phi_2, \bm v_h)_{\mathcal T_h}+(\bm \delta_2, \bm v_h)_{\mathcal T_h}, \label{pro_error_equa_Psi}\\
	\mathscr B(\bm \Pi_V \bm \Phi_1 - \bm \Phi_{1,h}, \Pi_W  \Psi_1 -  \Psi_{1,h}, \Pi_k^\partial   \Psi_1 - \widehat  \Psi_{1,h}; \bm v_h, w_h,\widehat w_h)  = (\bm \Pi_V \bm \Phi_1 - \bm \Phi_1, \bm v_h)_{\mathcal T_h},\label{error_equa_Phi}\\
	\mathscr B(\bm \Pi_V \bm \Phi_2 - \bm \Phi_{2,h}, \Pi_W  \Psi_2 -  \Psi_{2,h}, \Pi_k^\partial   \Psi_2 - \widehat  \Psi_{2,h}; \bm v_h, w_h,\widehat w_h)  = (\bm \Pi_V \bm \Phi_2 - \bm \Phi_2, \bm v_h)_{\mathcal T_h},\label{error_equa_Psi}\\
	\|\bm \Pi_V \bm \Phi_1 - \bm \Phi_{1,h}\|_{L^2(\Omega)} \le C, \quad \|\Pi_W\Psi_1 -  \Psi_{1,h}\|_{L^2(\Omega)} \le Ch^{\min\{k,1\}},\label{L2_error_Phi}\\
	\|\bm \Pi_V \bm \Phi_2 - \bm \Phi_{2,h}\|_{L^2(\Omega)} \le Ch^{-1}, \quad \|\Pi_W\Psi_2 -  \Psi_{2,h}\|_{L^2(\Omega)} \le Ch^{\min\{k,1\}-1}.\label{L2_error_Psi}
	\end{gather}
\end{subequations}

The proof of \eqref{pro_error_equa_Phi} and \eqref{error_equa_Phi} can be found in  \cite[Lemma 3.6]{ChenCockburnSinglerZhang1} and the proof of \eqref{pro_error_equa_Psi} and \eqref{error_equa_Psi} is similar. The proof of \eqref{L2_error_Phi} and \eqref{L2_error_Psi} can be found in \cite[Theorem 3.1 and Theorem 4.1]{Cockburn_Gopalakrishnan_Sayas_Porjection_MathComp_2010} and the regularity of of   regularized Green’s functions in \Cref{regularity_Green_function}.

We now present a series of lemmas providing convergence estimates for the projections  used in our analysis.

\begin{lemma}\label{L2_noundness_sigma} 
	For any integer $k\ge 0$, $K\in\mathcal T_h$ and $\alpha\in\mathbb R$, let $\bm \Pi_k^{o} $ be the standard $L^2$ projection (see \eqref{L2_do}), then for $\bm v\in \bm H^{k+1}(K)$ we have 
	\begin{subequations}
		\begin{align}\label{L2_prijection_sigma}
		\left\|\sigma^{\alpha}(\bm v-\bm \Pi_k^{o} \bm v)\right\|_{\bm L^2(K)}  \le Ch_K^{k+1} \|\sigma^{\alpha} \nabla^{k+1} \bm v\|_{\bm L^2(K)}.
		\end{align}
		Furthermore, let $w\in H^{k+1}(K)$, then $(\sigma^2\bm v,\sigma^2 w)$ is in the domain of $\Pi_h$  and we have
		\begin{align}\label{LV_prijection_sigma}
		\left\|\sigma^{\alpha}(\bm v-\bm \Pi_V \bm v)\right\|_{\bm L^2(K)}  \le Ch_K^{k+1}\left( \|\sigma^{\alpha}\nabla^{k+1} \bm v\|_{\bm L^2(K)} +   \|\sigma^{\alpha}\nabla^{k+1}  w\|_{ L^2(K)}\right).
		\end{align}
	\end{subequations}
\end{lemma}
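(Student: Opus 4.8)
The plan is to prove the two weighted estimates by combining the standard unweighted approximation properties of the projections (already quoted in the excerpt) with the slow variation of the weight $\sigma$ across each element, which is guaranteed by \eqref{pro_sigma_1}. The key observation is that on a single element $K$, the ratio $\max_K \sigma^\alpha / \min_K \sigma^\alpha$ is bounded by a constant independent of $K$, so $\sigma^\alpha$ behaves like a constant up to that factor. This lets us pull $\sigma^\alpha$ in and out of the $L^2(K)$ norm at the cost of only a fixed multiplicative constant.

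For the first estimate \eqref{L2_prijection_sigma}, I would argue as follows. First use \eqref{pro_sigma_1} to write
\begin{align*}
\left\|\sigma^{\alpha}(\bm v-\bm \Pi_k^{o} \bm v)\right\|_{\bm L^2(K)}
\le \Big(\max_{\bm x\in K}\sigma(\bm x)^\alpha\Big)\,\|\bm v-\bm \Pi_k^{o}\bm v\|_{\bm L^2(K)}.
\end{align*}
Then apply the unweighted $L^2$ projection estimate from \Cref{approximation_of_L2} (namely \eqref{Pro_jec_1} with $\rho=s_j=2$, $j=0$, $\ell=k$, $d=2$), which gives $\|\bm v-\bm\Pi_k^o\bm v\|_{\bm L^2(K)}\le Ch_K^{k+1}|\bm v|_{\bm H^{k+1}(K)}=Ch_K^{k+1}\|\nabla^{k+1}\bm v\|_{\bm L^2(K)}$. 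Finally, convert back from the constant $\max_K\sigma^\alpha$ to the weighted norm on the right-hand side by again invoking \eqref{pro_sigma_1} in the reverse direction, namely $\max_K\sigma^\alpha \le C\min_K\sigma^\alpha \le C\,\sigma(\bm x)^\alpha$ pointwise on $K$; this bounds $\max_K\sigma^\alpha\,\|\nabla^{k+1}\bm v\|_{\bm L^2(K)}$ by $C\|\sigma^\alpha\nabla^{k+1}\bm v\|_{\bm L^2(K)}$, yielding \eqref{L2_prijection_sigma}.

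For the second estimate \eqref{LV_prijection_sigma}, the strategy is the same quotient trick applied to the HDG projection $\bm\Pi_V$ in place of $\bm\Pi_k^o$. After first verifying the auxiliary claim that $(\sigma^2\bm v,\sigma^2 w)$ lies in the domain of $\Pi_h$ — which holds because $\sigma$ is smooth and bounded away from zero (indeed $\sigma\ge h>0$ by \eqref{def_sigma}), so $\sigma^2\bm v\in \bm H^{k+1}(K)$ whenever $\bm v\in\bm H^{k+1}(K)$ and the right-hand sides of \eqref{HDG_projection_operator} are well defined — I would bound $\|\sigma^\alpha(\bm v-\bm\Pi_V\bm v)\|_{\bm L^2(K)}$ above by $(\max_K\sigma^\alpha)\|\bm v-\bm\Pi_V\bm v\|_{\bm L^2(K)}$ and then invoke the unweighted projection error bound \eqref{Proerr_q} from \Cref{pro_error} (with $\ell_{\bm q}=\ell_u=k$), which produces the two terms $Ch_K^{k+1}(|\bm v|_{\bm H^{k+1}(K)}+|w|_{H^{k+1}(K)})$. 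Converting the leading $\max_K\sigma^\alpha$ back into the weight pointwise via \eqref{pro_sigma_1}, exactly as in the first part, gives \eqref{LV_prijection_sigma}.

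The routine parts here are the two applications of \Cref{approximation_of_L2} and \Cref{pro_error}; the only genuinely delicate point is the interchange of the weight with the norm. The step I expect to require the most care is the reverse direction of the quotient bound: one must be careful that absorbing the constant $\max_K\sigma^\alpha$ back into $\sigma(\bm x)^\alpha$ inside the integral is legitimate for \emph{both} signs of $\alpha$, which is precisely what \eqref{pro_sigma_1} (valid for all $\alpha\in\mathbb R$) guarantees. A secondary subtlety worth stating explicitly is the domain claim for $\Pi_h$ in the second estimate, since $\Pi_h$ is not the plain $L^2$ projection and its definition \eqref{HDG_projection_operator} presumes enough regularity of the arguments; this is handled by noting $\sigma\in C^\infty$ with $\sigma\ge h>0$ on $\bar\Omega$.
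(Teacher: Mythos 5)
Your proposal is correct and follows essentially the same route as the paper: pull $\max_{K}\sigma^{\alpha}$ out of the norm, apply the unweighted projection estimates (\eqref{Pro_jec_1} and \eqref{Proerr_q}), and then use \eqref{pro_sigma_1} to trade $\max_{K}\sigma^{\alpha}$ for $\min_{K}\sigma^{\alpha}$ and reabsorb the weight pointwise into the right-hand side. Your explicit remarks on the validity for both signs of $\alpha$ and on why $(\sigma^2\bm v,\sigma^2 w)$ lies in the domain of $\Pi_h$ are minor additions the paper leaves implicit.
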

\begin{proof}
	We only prove \eqref{LV_prijection_sigma} because the proof of \eqref{L2_prijection_sigma} is similar.
	\begin{align*}
	\left\|\sigma^{\alpha}(\bm v-\bm \Pi_V \bm v)\right\|_{\bm L^2(K)}&\le \max_{\bm x\in K} \{\sigma^{\alpha}\} ~\|\bm v-\bm \Pi_V \bm v\|_{\bm L^2(K)}\\
	&\le Ch_K^{k+1}\max_{\bm x\in K} \{\sigma^{\alpha}\} \left(|\nabla^{k+1} \bm v|_{\bm L^2(K)} +|\nabla^{k+1} w|_{ L^2(K)} \right)&\textup{by } \eqref{Proerr_q}\\
	&\le Ch_K^{k+1}\min_{\bm x\in K} \{\sigma^{\alpha}\} \left(|\nabla^{k+1} \bm v|_{\bm L^2(K)} +|\nabla^{k+1} w|_{ L^2(K)} \right) &\textup{by } \eqref{pro_sigma_1}\\
	&\le Ch_K^{k+1}\left( \|\sigma^{\alpha}\nabla^{k+1} \bm v\|_{\bm L^2(K)} +   \|\sigma^{\alpha}\nabla^{k+1}  w\|_{ L^2(K)}\right).
	\end{align*}
\end{proof}

\begin{lemma}\label{key_in} 
	Let $(\bm v_h, w_h) \in \bm V_h\times W_h$, then for any integer $k\ge 0$, we have
	\begin{subequations}
		\begin{align}
		\left\|\sigma^{-1}\left(\sigma^{2} \bm v_h-\bm \Pi_k^{o}(\sigma^{2} \bm v_h)\right)\right\|_{\bm L^2(K)} &\le  Ch\left\| \bm v_h\right\|_{\bm L^2(K)},\label{error_sigma_Pi}\\
		\left\|\sigma^{-1}\left(\sigma^{2} \bm v_h-\bm \Pi_V (\sigma^{2} \bm v_h)\right)\right\|_{\bm L^2(K)}&\le  Ch (\left\| \bm v_h\right\|_{\bm L^2(K)} + \left\|w_h\right\|_{\bm L^2(K)}).\label{error_sigma_Pi_V}
		\end{align}
	\end{subequations}
\end{lemma}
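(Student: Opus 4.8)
The plan is to exploit a simple but crucial structural observation: since $\sigma^2(\bm x)=|\bm x-\bm x_0|^2+h^2$ is a quadratic polynomial (see \eqref{def_sigma}), the product $\sigma^2\bm v_h$ is itself a piecewise polynomial of degree $k+2$ on each $K$. Consequently $\sigma^2\bm v_h-\bm\Pi_k^o(\sigma^2\bm v_h)$ is simply the $L^2$-projection error of a polynomial, and I can measure it with the approximation estimate \eqref{Pro_jec_1} (applied componentwise, with $\ell=k$, $j=0$, $\rho=s_0=2$, $d=2$), whose right-hand side involves only $|\sigma^2\bm v_h|_{\bm H^{k+1}(K)}$.

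To handle the weight $\sigma^{-1}$, I would first factor it out of the norm on each element, bounding $\sigma^{-1}$ by $\max_{\bm x\in K}\sigma^{-1}$ and later recombining via the equivalence $\max_K\sigma^{-1}\cdot\max_K\sigma\le C$ from \eqref{pro_sigma_1}. Next I expand $\nabla^{k+1}(\sigma^2\bm v_h)$ by the Leibniz rule. Because $\bm v_h\in[\mathcal P^k(K)]^2$ annihilates all derivatives of order $>k$ and $\sigma^2$ being quadratic annihilates all its own derivatives of order $>2$, only the two terms with one or two derivatives landing on $\sigma^2$ survive. Bounding these through \eqref{pro_sigma_2} (which gives $|\nabla\sigma^2|\le C\sigma$ and $|\nabla^2\sigma^2|\le C$) yields a pointwise bound of the form $|\nabla^{k+1}(\sigma^2\bm v_h)|\le C(\sigma|\nabla^k\bm v_h|+|\nabla^{k-1}\bm v_h|)$. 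The inverse inequality \eqref{inverse} then converts $\nabla^k\bm v_h$ and $\nabla^{k-1}\bm v_h$ into $h_K^{-k}\|\bm v_h\|$ and $h_K^{-(k-1)}\|\bm v_h\|$. Collecting factors, multiplying by $\max_K\sigma^{-1}$, and using $\sigma\ge h$ together with quasi-uniformity ($h_K\simeq h$) to control $\max_K\sigma^{-1}h_K^2\le Ch_K^2/h\le Ch$, I obtain \eqref{error_sigma_Pi}.

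For \eqref{error_sigma_Pi_V} I would run the identical argument, interpreting $\bm\Pi_V(\sigma^2\bm v_h)$ as the first component of $\Pi_h(\sigma^2\bm v_h,\sigma^2 w_h)$ (this pair lies in the domain of $\Pi_h$ by \Cref{L2_noundness_sigma}) and replacing the $L^2$-projection estimate by the HDG projection estimate \eqref{Proerr_q}. The coupling of $\bm\Pi_V$ to the scalar argument is precisely what produces the extra term $|\sigma^2 w_h|_{\bm H^{k+1}(K)}$, and hence the $\|w_h\|_{\bm L^2(K)}$ on the right-hand side; the factor $\tau_K^\star$ appearing in \eqref{Proerr_q} is harmless since $\tau$ is uniformly bounded and may be absorbed into $C$. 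Bounding $|\sigma^2 w_h|_{\bm H^{k+1}(K)}$ exactly as above (Leibniz, \eqref{pro_sigma_2}, inverse inequality) then finishes the estimate.

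The only genuinely delicate point is that $\nabla(\sigma^2)=2(\bm x-\bm x_0)$ is of size $O(1)$ rather than $O(h)$, so naively it would spoil the expected extra power of $h$. The resolution is that this term is controlled by $\sigma$ itself via \eqref{pro_sigma_2}, and this $\sigma$ cancels against the outer weight $\sigma^{-1}$ through the element-wise equivalence \eqref{pro_sigma_1}; the remaining second-order term, being $O(1)$ times $\nabla^{k-1}\bm v_h$, contributes only the manifestly higher-order $h_K^2$ factor. Keeping track of these two cancellations is the crux of the bookkeeping, and everything else is a routine combination of the approximation and inverse estimates already recorded.
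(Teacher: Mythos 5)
Your proposal is correct and follows essentially the same route as the paper: apply the (weighted) $L^2$ and HDG projection estimates to $\sigma^2\bm v_h$ (resp.\ the pair $(\sigma^2\bm v_h,\sigma^2 w_h)$), expand $\nabla^{k+1}(\sigma^2\bm v_h)$ by Leibniz using $\nabla^{k+1}\bm v_h=0$, control the derivatives of $\sigma^2$ via \eqref{pro_sigma_2} and \eqref{pro_sigma_1}, and finish with the inverse inequality and $\sigma\ge h$. The only cosmetic difference is that you inline the weight-extraction argument that the paper has already packaged into \Cref{L2_noundness_sigma}.
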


\begin{proof} 
	Notice that $\bm v_h|_K\in \mathcal [P^k(K)]^2$, i.e., $\nabla^{k+1} \bm v_h=0$. Then by \Cref{L2_noundness_sigma} we have
	\begin{align*}
	\left\|\sigma^{-1}\left(\sigma^2 \bm v_h-\bm \Pi_k^{o}(\sigma^2 \bm v_h)\right)\right\|_{\bm L^2(K)} & \le Ch^{k+1}\left\|\sigma^{-1} \left(\nabla^{k+1}(\sigma^2 \bm v_h)\right)\right\|_{\bm L^2(K)} \\
	&  =  Ch^{k+1}\left\|\sigma^{-1}\sum_{j=1}^{k+1}\nabla^j(\sigma^2)\nabla^{k+1-j}\bm v_h\right\|_{\bm L^2(K)}\\
	&\le Ch\|\bm v_h\|_{\bm L^2(K)}.
	\end{align*}
	where we  applied \eqref{pro_sigma_2} and \Cref{Inverse_inequality} to the above inequality. This proves \eqref{error_sigma_Pi} and  the proof of \eqref{error_sigma_Pi_V} is the same, hence we omit the details here.
\end{proof}

\begin{lemma}\label{weight_err_thm}
	Let $(\bm \Phi_1,\Psi_1)$ and  $(\bm \Phi_{1,h}, \Psi_{1,h},\widehat  \Psi_{1,h})$ be the solution of \eqref{Green1} and \eqref{Green_HDG_1} respectively, and $(\bm \Phi_2,\Psi_2)$ and  $(\bm \Phi_{2,h},\Psi_{2,h}, \widehat \Psi_{2,h})$ be the solutions of \eqref{Green2} and \eqref{Green_HDG_2}. If  assumption (A) holds and $k\geqslant 1$, then we have:
	\begin{subequations}\label{weight_err}
		\begin{align}
		\|\sigma (c\bm \Phi_1 - c\bm \Phi_{1,h})\|_{\bm L^2(\Omega)}   &\le  Ch(|\log h|^{1/2}+1),\label{weight_err_1}\\
		\|\sigma (c\bm \Phi_2 - c\bm \Phi_{2,h})\|_{\bm L^2(\Omega)} &\le  C.\label{weight_err_2}
		\end{align}	
	\end{subequations}
\end{lemma}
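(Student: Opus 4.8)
I would prove both bounds at once, since the error equations \eqref{error_equa_Phi} and \eqref{error_equa_Psi} are structurally identical and only the regularity of $(\bm\Phi,\Psi)$ and the size of the $L^2$ bounds \eqref{L2_error_Phi}--\eqref{L2_error_Psi} differ between the two Green's functions. Writing $(\bm\Phi,\Psi)$ for either one and setting $\bm\varepsilon_q=\bm\Pi_V\bm\Phi-\bm\Phi_h$, $\varepsilon_u=\Pi_W\Psi-\Psi_h$, $\widehat\varepsilon_u=\Pi_k^\partial\Psi-\widehat\Psi_h$, the first step is the split
\[ \|\sigma(c\bm\Phi-c\bm\Phi_h)\|_{\bm L^2(\Omega)}\le C\|\sigma(\bm\Phi-\bm\Pi_V\bm\Phi)\|_{\bm L^2(\Omega)}+C\|\sigma\bm\varepsilon_q\|_{\bm L^2(\Omega)}, \]
valid since $c$ is bounded. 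The projection term is handled elementwise by a low-order variant of \Cref{L2_noundness_sigma} (apply \eqref{Proerr_q} with $\ell_{\bm q}=0,\ \ell_u=1$ and move the weight inside using \eqref{pro_sigma_1}), which dominates it by $Ch\|\sigma\nabla\bm\Phi\|_{\bm L^2(\Omega)}+Ch^2\|\sigma\nabla^2\Psi\|_{L^2(\Omega)}$. Because $c\bm\Phi=\bm\delta_i-\nabla\Psi$, this reduces to weighted norms of $D^2\Psi$ (and, for the second Green's function, of $\nabla\bm\delta_2$ and $\bm\delta_2$), which are controlled by \Cref{regularity_Green_function} and \eqref{delta_function_5}; this already produces the target factors $Ch(|\log h|^{1/2}+1)$ and $C$ respectively.

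The heart of the proof is the bound on $\|\sigma\bm\varepsilon_q\|_{\bm L^2(\Omega)}$. Expanding \eqref{def_B} with the weighted test triple, using the product rule for $\nabla\cdot(\sigma^2\bm\varepsilon_q)$ and the cancellation of the boundary pairings, I obtain the weighted energy identity
\[ \mathscr B(\bm\varepsilon_q,\varepsilon_u,\widehat\varepsilon_u;\sigma^2\bm\varepsilon_q,-\sigma^2\varepsilon_u,-\sigma^2\widehat\varepsilon_u)=(c\bm\varepsilon_q,\sigma^2\bm\varepsilon_q)_{\mathcal T_h}+\langle\tau\sigma^2(\varepsilon_u-\widehat\varepsilon_u),\varepsilon_u-\widehat\varepsilon_u\rangle_{\partial\mathcal T_h}-(\varepsilon_u,\nabla(\sigma^2)\cdot\bm\varepsilon_q)_{\mathcal T_h}. \]
Since the weighted triple is not discrete, I replace it by its discrete projection $(\bm v_h,w_h,\widehat w_h)=(\bm\Pi_k^o(\sigma^2\bm\varepsilon_q),-\Pi_k^o(\sigma^2\varepsilon_u),-\Pi_k^\partial(\sigma^2\widehat\varepsilon_u))$ and write each component as (projection) $+$ (remainder). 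The discrete part is inserted into the error equation \eqref{error_equa_Phi}/\eqref{error_equa_Psi}, so it equals $(\bm\Pi_V\bm\Phi-\bm\Phi,\bm v_h)_{\mathcal T_h}$, while the remainders are precisely the commutators controlled by \Cref{key_in}.

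Rearranging, and using the positive definiteness of $c$ so that $(c\bm\varepsilon_q,\sigma^2\bm\varepsilon_q)_{\mathcal T_h}\ge c_0\|\sigma\bm\varepsilon_q\|_{\bm L^2(\Omega)}^2$, I am left to bound three contributions. For $(\bm\Pi_V\bm\Phi-\bm\Phi,\bm v_h)_{\mathcal T_h}=(\sigma(\bm\Pi_V\bm\Phi-\bm\Phi),\sigma^{-1}\bm v_h)_{\mathcal T_h}$ I note $\|\sigma^{-1}\bm v_h\|_{\bm L^2(\Omega)}\le C\|\sigma\bm\varepsilon_q\|_{\bm L^2(\Omega)}$ (by \eqref{error_sigma_Pi} and $\sigma\ge h$), so Young's inequality turns it into the already-estimated projection term plus an absorbable multiple of $\|\sigma\bm\varepsilon_q\|_{\bm L^2(\Omega)}^2$. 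For $(\varepsilon_u,\nabla(\sigma^2)\cdot\bm\varepsilon_q)_{\mathcal T_h}$ I use $|\nabla(\sigma^2)|\le C\sigma$ from \eqref{pro_sigma_2} to get $C\|\varepsilon_u\|_{L^2(\Omega)}\|\sigma\bm\varepsilon_q\|_{\bm L^2(\Omega)}$, again absorbable up to $C\|\varepsilon_u\|_{L^2(\Omega)}^2$, which by \eqref{L2_error_Phi}/\eqref{L2_error_Psi} is $Ch^2$ (resp.\ $C$). The remainder term $\mathscr B(\bm\varepsilon_q,\varepsilon_u,\widehat\varepsilon_u;\text{remainder})$ is expanded into its six pieces and each is bounded by combining \Cref{key_in}, the inverse inequality \Cref{Inverse_inequality}, the properties \eqref{pro_sigma_1}--\eqref{pro_sigma_2} of $\sigma$, and the $L^2$ bounds, yielding a contribution of the same or higher order. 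Absorbing the $\|\sigma\bm\varepsilon_q\|_{\bm L^2(\Omega)}^2$ terms and taking square roots gives the two claims.

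The main obstacle is this last point, namely controlling the commutator remainder $\mathscr B(\bm\varepsilon_q,\varepsilon_u,\widehat\varepsilon_u;\text{remainder})$. The volume term $(\varepsilon_u,\nabla\cdot\bm r_q)_{\mathcal T_h}$, with $\bm r_q=\sigma^2\bm\varepsilon_q-\bm\Pi_k^o(\sigma^2\bm\varepsilon_q)$, loses a factor $h^{-1}$ through \Cref{Inverse_inequality}, and the jump pairings involving $\widehat\varepsilon_u$ need a trace-type estimate together with the face commutator $\sigma^2\widehat\varepsilon_u-\Pi_k^\partial(\sigma^2\widehat\varepsilon_u)$, for which only the face projection $\Pi_k^\partial$ (not the volume estimate of \Cref{key_in}) is directly available. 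The delicate point is to verify termwise that every such factor $h^{-1}$ is compensated by the $O(h)$ smallness of the commutators in \Cref{key_in} and by the bound $\sigma\ge h$, so that no remainder exceeds the target order; establishing this balance, especially for the face terms, is where the bulk of the technical work lies.
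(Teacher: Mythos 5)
Your overall skeleton matches the paper's: compute $\mathscr B(\bm\varepsilon_q,\varepsilon_u,\widehat\varepsilon_u;\sigma^2\bm\varepsilon_q,-\sigma^2\varepsilon_u,-\sigma^2\widehat\varepsilon_u)$ once by expanding \eqref{def_B} (yielding the weighted energy identity with the single commutator $(\varepsilon_u,\nabla(\sigma^2)\cdot\bm\varepsilon_q)_{\mathcal T_h}$) and once by splitting the weighted test triple into a discrete projection plus a remainder, feeding the discrete part into the error equation \eqref{error_equa_Phi}. However, there is a genuine gap at the decisive step: you project the weighted test functions with the plain $L^2$ projections $\bm\Pi_k^o$ and $\Pi_k^o$, whereas the paper projects the pair $(\sigma^2\bm\varepsilon_q,-\sigma^2\varepsilon_u)$ with the \emph{HDG} projection $\Pi_h=(\bm\Pi_V,\Pi_W)$ of \eqref{HDG_projection_operator}. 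That choice is not cosmetic. The defining properties \eqref{projection_operator_1}--\eqref{projection_operator_2}, combined with $\nabla\varepsilon_u|_K\in[\mathcal P^{k-1}(K)]^2$ and $\nabla\cdot\bm\varepsilon_q|_K\in\mathcal P^{k-1}(K)$, make the volume remainder terms $(\nabla\varepsilon_u,\sigma^2\bm\varepsilon_q-\bm\Pi_V(\sigma^2\bm\varepsilon_q))_{\mathcal T_h}$ and $(\nabla\cdot\bm\varepsilon_q,\sigma^2\varepsilon_u+\Pi_W(-\sigma^2\varepsilon_u))_{\mathcal T_h}$ vanish \emph{identically}, and the coupled face property \eqref{projection_operator_3} makes the sum of the two face pairings involving $\varepsilon_u-\widehat\varepsilon_u$ vanish exactly as well. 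After these cancellations only the four benign terms $I_1,\dots,I_4$ survive, each handled by Young's inequality together with \Cref{key_in}, \Cref{L2_noundness_sigma}, \eqref{L2_error_Phi} and \Cref{regularity_Green_function}.

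With your $L^2$-projection choice none of these cancellations occur, and you candidly defer the resulting termwise estimates — the divergence of the volume commutator and the face pairings against $\varepsilon_u-\widehat\varepsilon_u$ and $\widehat\varepsilon_u$ — as ``the bulk of the technical work.'' That deferred work is essentially the entire difficulty of the lemma, and it is not clear it closes: the inverse inequality cannot be applied directly to $\nabla\cdot\bigl(\sigma^2\bm\varepsilon_q-\bm\Pi_k^o(\sigma^2\bm\varepsilon_q)\bigr)$ because $\sigma^2\bm\varepsilon_q$ is not a piecewise polynomial, the face commutator $\sigma^2\widehat\varepsilon_u-\Pi_k^\partial(\sigma^2\widehat\varepsilon_u)$ has no companion volume estimate in \Cref{key_in}, and the trace factors $h^{-1/2}$ must be balanced against the boundary energy $\|\sigma\sqrt\tau(\varepsilon_u-\widehat\varepsilon_u)\|_{L^2(\partial\mathcal T_h)}$ term by term. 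The fix is simply to adopt the paper's test function: use $\Pi_h$ applied to the weighted pair (which \Cref{L2_noundness_sigma} shows is in its domain) together with $\Pi_k^\partial$ on the trace, so that the problematic remainders disappear by construction rather than by estimation. Your treatment of the surviving terms, and your reduction of $\|\sigma(\bm\Phi-\bm\Pi_V\bm\Phi)\|_{\bm L^2(\Omega)}$ to weighted norms of $D^2\Psi$ via \Cref{regularity_Green_function}, is consistent with the paper.
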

\begin{proof}
	Let $	\mathcal E_h^{\bm \Phi_1} = \bm \Pi_V \bm \Phi_1 - \bm \Phi_{1,h}, 		\mathcal E_h^{ \Psi_1} =  \Pi_W  \Psi_1 -  \Psi_{1,h}, 
	\mathcal E_h^{\widehat  \Psi_1} =  \Pi_k^\partial   \Psi_1 - \widehat  \Psi_{1,h}$.
	On the one hand, by the definition of $\mathscr B$ in \eqref{def_B} we obtain:
	\begin{align*}
	\hspace{1em}&\hspace{-1em}\mathscr B(\mathcal E_h^{\bm \Phi_1}, \mathcal E_h^{\Psi_1}, \mathcal E_h^{\widehat \Psi_1}; \sigma^2\mathcal E_h^{\bm \Phi_1}, -\sigma^2\mathcal E_h^{\Psi_1}, -\sigma^2\mathcal E_h^{\widehat \Psi_1})\\
	&=(c\mathcal E_h^{\bm  \Phi_1},  \sigma^2\mathcal E_h^{\bm  \Phi_1})_{{\mathcal{T}_h}}- ( \mathcal E_h^{\Psi_1}, \nabla\cdot(\sigma^2\mathcal E_h^{\bm \Phi_1}))_{{\mathcal{T}_h}}+\langle \mathcal E_h^{\widehat \Psi_1}, \sigma^2\mathcal E_h^{\bm \Phi_1}\cdot \bm{n} \rangle_{\partial{{\mathcal{T}_h}}}\\
	&\quad+ (\nabla\cdot\mathcal E_h^{\bm  \Phi_1},   \sigma^2\mathcal E_h^{\Psi_1})_{{\mathcal{T}_h}}+\langle  \tau (\mathcal E_h^{\Psi_1}- \mathcal E_h^{\widehat \Psi_1}),   \sigma^2\mathcal E_h^{\Psi_1}-\sigma^2\mathcal E_h^{\widehat \Psi_1} \rangle_{\partial{{\mathcal{T}_h}}} - \langle \mathcal E_h^{\bm \Phi} \cdot\bm n,\sigma^2\mathcal E_h^{\widehat \Psi_1} \rangle_{\partial \mathcal T_h}\\
	&=(c\mathcal E_h^{\bm \Phi_1},  \sigma^2\mathcal E_h^{\bm  \Phi_1})_{{\mathcal{T}_h}}- ( \mathcal E_h^{\Psi_1}, \sigma^2 \nabla\cdot \mathcal E_h^{\bm  \Phi_1})_{{\mathcal{T}_h}} - ( \mathcal E_h^{\Psi_1}, 2\sigma \nabla\sigma\mathcal E_h^{\bm  \Phi_1})_{{\mathcal{T}_h}} +\langle \mathcal E_h^{\widehat \Psi_1}, \sigma^2\mathcal E_h^{\bm \Phi_1}\cdot \bm{n} \rangle_{\partial{{\mathcal{T}_h}}}\\
	&\quad+ (\nabla\cdot\mathcal E_h^{\bm  \Phi_1},   \sigma^2\mathcal E_h^{\Psi_1})_{{\mathcal{T}_h}}+\langle  \tau (\mathcal E_h^{\Psi_1}- \mathcal E_h^{\widehat \Psi_1}),   \sigma^2\mathcal E_h^{\Psi_1}-\sigma^2\mathcal E_h^{\widehat \Psi_1} \rangle_{\partial{{\mathcal{T}_h}}} - \langle \mathcal E_h^{\bm  \Phi_1} \cdot\bm n,\sigma^2\mathcal E_h^{\widehat \Psi_1} \rangle_{\partial \mathcal T_h}.
	\end{align*}
	This gives
	\begin{align}\label{equation_1}
	\begin{split}
	\hspace{1em}&\hspace{-1em} \mathscr B(\mathcal E_h^{\bm \Phi_1}, \mathcal E_h^{\Psi_1}, \mathcal E_h^{\widehat \Psi_1}; \sigma^2\mathcal E_h^{\bm \Phi_1}, -\sigma^2\mathcal E_h^{\Psi_1}, -\sigma^2\mathcal E_h^{\widehat \Psi_1})\\
	& =  (c\mathcal E_h^{\bm\Phi_1},   \sigma^2\mathcal E_h^{\bm\Phi_1})_{{\mathcal{T}_h}} + \|\sigma \sqrt \tau (\mathcal E_h^{\Psi_1}- \mathcal E_h^{\widehat \Psi_1})\|_{L^2(\partial\mathcal T_h)}^2 -  ( \mathcal E_h^{\Psi_1}, 2\sigma \nabla\sigma\cdot\mathcal E_h^{\bm  \Phi_1})_{{\mathcal{T}_h}}.
	\end{split}
	\end{align}
	
	We use  definition \eqref{HDG_projection_operator} with $\bm q=\sigma^2\mathcal E_h^{\bm\Phi_1}, u=-\sigma^2\mathcal E_h^{\psi_1}$.
	On the other hand, by the error equation \eqref{error_equa_Phi} are get 
	\begin{align*}
	\hspace{0.1em}&\hspace{-0.1em}\mathscr B(\mathcal E_h^{\bm \Phi_1}, \mathcal E_h^{\Psi_1}, \mathcal E_h^{\widehat \Psi_1}; \sigma^2\mathcal E_h^{\bm \Phi_1}, -\sigma^2\mathcal E_h^{\Psi_1}, -\sigma^2\mathcal E_h^{\widehat \Psi_1})\\
	& = \mathscr B(\mathcal E_h^{\bm \Phi_1}, \mathcal E_h^{\Psi_1}, \mathcal E_h^{\widehat \Psi_1}; \sigma^2\mathcal E_h^{\bm  \Phi_1} - \bm \Pi_V (\sigma^2\mathcal E_h^{\bm \Phi_1}), { -\sigma^2\mathcal E_h^{\Psi_1}- \Pi_W(-\sigma^2\mathcal E_h^{\Psi_1})}, -(\sigma^2\mathcal E_h^{\widehat \Psi_1} - \Pi_k^\partial(\sigma^2\mathcal E_h^{\widehat \Psi_1})))\\
	&\quad  + \mathscr B(\mathcal E_h^{\bm  \Phi_1}, \mathcal E_h^{\Psi_1}, \mathcal E_h^{\widehat \Psi_1}; \bm \Pi_V (\sigma^2\mathcal E_h^{\bm \Phi_1}), {  \Pi_W(-\sigma^2\mathcal E_h^{\Psi_1})},- \Pi_k^\partial(\sigma^2\mathcal E_h^{\widehat \Psi_1}))\\
	& = \mathscr B(\mathcal E_h^{\bm  \Phi_1}, \mathcal E_h^{\Psi_1}, \mathcal E_h^{\widehat \Psi_1}; \sigma^2\mathcal E_h^{\bm  \Phi_1} - \bm \Pi_V(\sigma^2\mathcal E_h^{\bm \Phi_{1}}), { -\sigma^2\mathcal E_h^{\Psi_1}- \Pi_W(-\sigma^2\mathcal E_h^{\Psi_1})}, -(\sigma^2\mathcal E_h^{\widehat \Psi_1} - \Pi_k^\partial(\sigma^2\mathcal E_h^{\widehat \Psi_1})))\\
	&\quad  + (\bm \Pi_V \bm  \Phi_1- \bm  \Phi_1, \bm \Pi_V(\sigma^2\mathcal E_h^{\bm \Phi_1}))_{\mathcal T_h}.
	\end{align*}
	Next, we use the definition of $\mathscr B$ in \eqref{def_B} again to get:
	\begin{align*}
	\hspace{1em}&\hspace{-1em} \mathscr B(\mathcal E_h^{\bm  \Phi_1}, \mathcal E_h^{\Psi_1}, \mathcal E_h^{\widehat \Psi_1}; \sigma^2\mathcal E_h^{\bm  \Phi_1} - \bm \Pi_V(\sigma^2\mathcal E_h^{\bm \Phi_{1}}), { -\sigma^2\mathcal E_h^{\Psi_1}- \Pi_W(-\sigma^2\mathcal E_h^{\Psi_1})}, -(\sigma^2\mathcal E_h^{\widehat \Psi_1} - \Pi_k^\partial(\sigma^2\mathcal E_h^{\widehat \Psi_1})))\\
	&=(\mathcal E_h^{\bm  \Phi_1}, \sigma^2\mathcal E_h^{\bm  \Phi_1} - \bm \Pi_V(\sigma^2\mathcal E_h^{\bm \Phi_1}))_{{\mathcal{T}_h}}- ( \mathcal E_h^{\Psi_1}, \nabla\cdot(\sigma^2\mathcal E_h^{\bm  \Phi_1} - \bm \Pi_V(\sigma^2\mathcal E_h^{\bm  \Phi_1})))_{{\mathcal{T}_h}}\\
	&\quad +\langle \mathcal E_h^{\widehat \Psi_1}, (\sigma^2\mathcal E_h^{\bm  \Phi_1} - \bm \Pi_V(\sigma^2\mathcal E_h^{\bm  \Phi_1}){ )}\cdot \bm{n} \rangle_{\partial{{\mathcal{T}_h}}}+ (\nabla\cdot\mathcal E_h^{\bm  \Phi_1},   { \sigma^2\mathcal E_h^{\Psi_1}+ \Pi_W(-\sigma^2\mathcal E_h^{\Psi_1})})_{{\mathcal{T}_h}}\\
	&\quad +\langle  \tau (\mathcal E_h^{\Psi_1}- \mathcal E_h^{\widehat \Psi_1}),   ({ \sigma^2\mathcal E_h^{\Psi_1}+ \Pi_W(-\sigma^2\mathcal E_h^{\Psi_1})}) -(\sigma^2\mathcal E_h^{\widehat \Psi_1} - \Pi_k^\partial(\sigma^2\mathcal E_h^{\widehat \Psi_1}))  \rangle_{\partial{{\mathcal{T}_h}}} \\
	&\quad - \langle \mathcal E_h^{\bm  \Phi_1} \cdot\bm n,\sigma^2\mathcal E_h^{\widehat \Psi_1} - \Pi_k^\partial(\sigma^2\mathcal E_h^{\widehat \Psi_1}) \rangle_{\partial \mathcal T_h}+ (\bm \Pi_V \bm  \Phi_1- \bm  \Phi_1, \bm \Pi_V(\sigma^2\mathcal E_h^{\bm  \Phi_1}))\\
	&=(\mathcal E_h^{\bm  \Phi_1}, \sigma^2\mathcal E_h^{\bm  \Phi_1} - \bm \Pi_V(\sigma^2\mathcal E_h^{\bm \Phi_1}))_{{\mathcal{T}_h}} + (\nabla \mathcal E_h^{\Psi_1}, \sigma^2\mathcal E_h^{\bm  \Phi_1} - \bm \Pi_V(\sigma^2\mathcal E_h^{\bm  \Phi_1}))_{{\mathcal{T}_h}}\\
	&\quad -\langle \mathcal E_h^{ \Psi_1} -  \mathcal E_h^{\widehat \Psi_1}, (\sigma^2\mathcal E_h^{\bm  \Phi_1} - \bm \Pi_V(\sigma^2\mathcal E_h^{\bm  \Phi_1}){ )}\cdot \bm{n} \rangle_{\partial{{\mathcal{T}_h}}}+ (\nabla\cdot\mathcal E_h^{\bm  \Phi_1},   { \sigma^2\mathcal E_h^{\Psi_1}+ \Pi_W(-\sigma^2\mathcal E_h^{\Psi_1})})_{{\mathcal{T}_h}}\\
	&\quad +\langle  \tau (\mathcal E_h^{\Psi_1}- \mathcal E_h^{\widehat \Psi_1}),  { \sigma^2\mathcal E_h^{\Psi_1}+ \Pi_W(-\sigma^2\mathcal E_h^{\Psi_1})}\rangle_{\partial{{\mathcal{T}_h}}} + (\bm \Pi_V \bm  \Phi_1- \bm  \Phi_1, \bm \Pi_V(\sigma^2\mathcal E_h^{\bm  \Phi_1}))_{\mathcal T_h},
	\end{align*}
	where we used integration by parts in the above equation. Notice that $(\sigma^2\mathcal E_h^{\bm  \Phi_1}, -\sigma^2\mathcal E_h^{\Psi_1})$ is in the domain of $\Pi_h$ (see \eqref{HDG_projection_operator}), then by \eqref{projection_operator_3} and the fact that $(\mathcal E_h^{ \Psi_1} -  \mathcal E_h^{\widehat \Psi_1})|_F \in  \mathcal P^k(F)$ for all $F\in \mathcal F_h$, we have 
	\begin{align*} 
	\langle \mathcal E_h^{ \Psi_1} -  \mathcal E_h^{\widehat \Psi_1}, ( \bm \Pi_V(\sigma^2\mathcal E_h^{\bm  \Phi_1})-\sigma^2\mathcal E_h^{\bm  \Phi_1})\cdot \bm{n} \rangle_{\partial{{\mathcal{T}_h}}}+\langle  \tau (\mathcal E_h^{\Psi_1}- \mathcal E_h^{\widehat \Psi_1}),  { \sigma^2\mathcal E_h^{\Psi_1}+ \Pi_W(-\sigma^2\mathcal E_h^{\Psi_1})}\rangle_{\partial{{\mathcal{T}_h}}} =0.
	\end{align*}
	Furthermore, by \eqref{projection_operator_1}-\eqref{projection_operator_2} and the fact that $\nabla \mathcal E_h^{\Psi_1}|_K\in [\mathcal P^{k-1}(K)]^2$ and $\nabla\cdot\mathcal E_h^{\bm  \Phi_1}|_K \in \mathcal P^{k-1}(K)$, then we have 
	\begin{align*}
	(\nabla \mathcal E_h^{\Psi_1},\sigma^2\mathcal E_h^{\bm  \Phi_1} - \bm \Pi_V(\sigma^2\mathcal E_h^{\bm \Phi_1}) )_{{\mathcal{T}_h}} &= 0,\\
	(\nabla\cdot\mathcal E_h^{\bm  \Phi_1},  { \sigma^2\mathcal E_h^{\Psi_1}+ \Pi_W(-\sigma^2\mathcal E_h^{\Psi_1})})_{{\mathcal{T}_h}}& = 0.
	\end{align*}
	This gives
	\begin{align}\label{equation_2}
	\begin{split}
	\hspace{1em}&\hspace{-1em}\mathscr B(\mathcal E_h^{\bm \Phi_1}, \mathcal E_h^{\Psi_1}, \mathcal E_h^{\widehat \Psi_1}; \sigma^2\mathcal E_h^{\bm \Phi_1}, -\sigma^2\mathcal E_h^{\Psi_1}, -\sigma^2\mathcal E_h^{\widehat \Psi_1})\\
	&=(\mathcal E_h^{\bm  \Phi_1}, \sigma^2\mathcal E_h^{\bm  \Phi_1} - \bm \Pi_V(\sigma^2\mathcal E_h^{\bm  \Phi_1}))_{{\mathcal{T}_h}}+ (\bm \Pi_V \bm  \Phi_1- \bm  \Phi_1, \bm \Pi_V(\sigma^2\mathcal E_h^{\bm  \Phi_1})).
	\end{split}
	\end{align}
	Comparing with \eqref{equation_1} and \eqref{equation_2} we have
	\begin{align*}
	\hspace{1em}&\hspace{-1em} (c\mathcal E_h^{\bm\Phi_1},   \sigma^2\mathcal E_h^{\bm\Phi_1})_{{\mathcal{T}_h}} + \|\sigma \sqrt \tau (\mathcal E_h^{\Psi_1}- \mathcal E_h^{\widehat \Psi_1})\|_{L^2(\partial\mathcal T_h)}^2 \\
	&=   ( \mathcal E_h^{\Psi_1}, 2\sigma \nabla\sigma\cdot \mathcal E_h^{\bm \Phi_1})_{{\mathcal{T}_h}} + (\mathcal E_h^{\bm \Phi_1}, \sigma^2\mathcal E_h^{\bm \Phi_1} - \bm \Pi_V(\sigma^2\mathcal E_h^{\bm \Phi_1}))_{{\mathcal{T}_h}}\\
	&\quad + (\bm \Pi_V \bm \Phi_1- \bm \Phi_1, \bm \Pi_V(\sigma^2\mathcal E_h^{\bm \Phi_1}) -\sigma^2\mathcal E_h^{\bm \Phi_1} )_{{\mathcal{T}_h}} + (\bm \Pi_V \bm \Phi_1- \bm \Phi_1, \sigma^2\mathcal E_h^{\bm \Phi_1} )_{{\mathcal{T}_h}}\\
	& = I_1  + I_2 + I_3 + I_4.
	\end{align*}
	For the first term $I_1$, we use \eqref{pro_sigma_2}, Young's inequality,  \eqref{L2_error_Phi} and $k\geq 1$ to get
	\begin{align*}
	|I_1| \le \frac 1 4 (c\mathcal E_h^{\bm\Phi_1},   \sigma^2\mathcal E_h^{\bm\Phi_1})_{{\mathcal{T}_h}}  + C \|\mathcal E_h^{\Psi_1}\|^2_{ L^2(\Omega)} \le  \frac 1 4  \|\sigma \mathcal E_h^{\bm \Phi_1}\|_{\bm L^2(\Omega)}^2  + C h^2.
	\end{align*}
	For the second term $I_2$, we use Young's inequality, \eqref{error_sigma_Pi_V} and  \eqref{L2_error_Phi} to get
	\begin{align*}
	|I_2| &=\left|(\sigma\mathcal E_h^{\bm \Phi_1}, \sigma^{-1}(\sigma^2\mathcal E_h^{\bm \Phi_1} - \bm \Pi_V(\sigma^2\mathcal E_h^{\bm \Phi_1})))_{{\mathcal{T}_h}}\right|\\
	& \le \frac 1 4 (c\mathcal E_h^{\bm\Phi_1},   \sigma^2\mathcal E_h^{\bm\Phi_1})_{{\mathcal{T}_h}} + C \left\|\sigma^{-1}(\sigma^2\mathcal E_h^{\bm \Phi_1} - \bm \Pi_V(\sigma^2\mathcal E_h^{\bm \Phi_1})))_{{\mathcal{T}_h}}\right\|_{\bm L^2(\Omega)}^2\\
	&\le \frac 1 4 (c\mathcal E_h^{\bm\Phi_1},   \sigma^2\mathcal E_h^{\bm\Phi_1})_{{\mathcal{T}_h}}  + Ch^2(\| \mathcal E_h^{\bm \Phi_1}\|_{\bm L^2(\Omega)}^2   + \| \mathcal E_h^{ \Psi_1}\|_{ L^2(\Omega)}^2)\\
	&\le \frac 1 4 (c\mathcal E_h^{\bm\Phi_1},   \sigma^2\mathcal E_h^{\bm\Phi_1})_{{\mathcal{T}_h}} + Ch^2.
	\end{align*}
	For the third  term $I_3$, we use Young's inequality, \eqref{error_sigma_Pi_V},  \eqref{L2_error_Phi}, \eqref{LV_prijection_sigma} and \eqref{regu_delta_1} to get
	\begin{align*}
	|I_3| &=\left| (\sigma(\bm \Pi_V \bm \Phi_1- \bm \Phi_1), \sigma^{-1}(\bm \Pi_V(\sigma^2\mathcal E_h^{\bm \Phi_1}) -\sigma^2\mathcal E_h^{\bm \Phi_1} ))_{{\mathcal{T}_h}} \right|\\
	& \le \frac 1 2  \|\sigma(\bm \Pi_V \bm \Phi_1- \bm \Phi_1)\|_{\bm L^2(\Omega)}^2  + \frac 1 2 \left\|\sigma^{-1}(\sigma^2\mathcal E_h^{\bm \Phi_1} - \bm \Pi_V(\sigma^2\mathcal E_h^{\bm \Phi_1})))_{{\mathcal{T}_h}}\right\|_{\bm L^2(\Omega)}^2\\
	&\le C h^2  \|\sigma D^2\Psi_1 \|_{ L^2(\Omega)}^2  + Ch^2(\| \mathcal E_h^{\bm \Phi_1}\|_{\bm L^2(\Omega)}^2   + \| \mathcal E_h^{ \Psi_1}\|_{ L^2(\Omega)}^2)\\
	&\le C h^2( 1+ |\log h|).
	\end{align*}
	For the last term $I_4$, we use Young's inequality,  \eqref{LV_prijection_sigma} and \eqref{regu_delta_1}  to get
	\begin{align*}
	|I_4| &=\left| (\sigma(\bm \Pi_V \bm \Phi_1- \bm \Phi_1), \sigma\mathcal E_h^{\bm \Phi_1} )_{{\mathcal{T}_h}} \right|\\
	& \le \frac 1 4  (c\mathcal E_h^{\bm\Phi_1},   \sigma^2\mathcal E_h^{\bm\Phi_1})_{{\mathcal{T}_h}}  + C \|\sigma(\bm \Pi_V \bm \Phi_1- \bm \Phi_1)\|_{\bm L^2(\Omega)}^2 \\
	&\le  \frac 1 4  (c\mathcal E_h^{\bm\Phi_1},   \sigma^2\mathcal E_h^{\bm\Phi_1})_{{\mathcal{T}_h}}  + C h^2  \|\sigma D^2\Psi \|_{ L^2(\Omega)}^2  \\
	&\le  \frac 1 4  (c\mathcal E_h^{\bm\Phi_1},   \sigma^2\mathcal E_h^{\bm\Phi_1})_{{\mathcal{T}_h}}  + C h^2( 1+ |\log h|).
	\end{align*}
	
\end{proof}

\begin{lemma}\label{dual_L11}
	Let $(\bm \Phi_1,\Psi_1)$ and  $(\bm \Phi_{1,h}, \Psi_{1,h},\widehat \Psi_{1,h})$ be the solutions of \eqref{Green1} and \eqref{Green_HDG_1} respectively, $(\bm \Phi_2,\Psi_2)$ and  let $(\bm \Phi_{2,h}, \Psi_{2,h},\widehat \Psi_{2,h})$ be the solution of \eqref{Green2} and \eqref{Green_HDG_2}.  If assumption (A) holds and $k\geqslant 1$, then we have:
	\begin{subequations}\label{imp_est}
		\begin{align}
		\|\bm \Phi_{1,h}- \bm \Phi_1\|_{\bm L^1(\Omega)} + \|c\bm \Phi_1 - \bm \Pi_{k-1}(c\bm \Phi_1)\|_{\bm L^1(\Omega)} &\le  Ch(|\log h|+1),\label{imp_est_1}\\
		\|\bm \Phi_{2,h}- \bm \Phi_2\|_{\bm L^1(\Omega)} + \|c\bm \Phi_2 - \bm \Pi_{k-1}(c\bm \Phi_2)\|_{\bm L^1(\Omega)} &\le  C(|\log h|^{1/2}+1).\label{imp_est_2}
		\end{align}	
	\end{subequations}
\end{lemma}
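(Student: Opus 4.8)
The plan is to convert every $\bm L^1(\Omega)$ bound into a weighted $\bm L^2(\Omega)$ bound through a single application of Cauchy--Schwarz against the weight $\sigma^{-1}$. Indeed, for any $\bm g$ one has $\|\bm g\|_{\bm L^1(\Omega)} = \int_\Omega \sigma^{-1}\,\sigma|\bm g|\,{\rm d}\bm x \le \|\sigma^{-1}\|_{L^2(\Omega)}\,\|\sigma\bm g\|_{\bm L^2(\Omega)}$, and \eqref{sigma_lemma_2_constant} gives $\|\sigma^{-1}\|_{L^2(\Omega)}\le C|\log h|^{1/2}$. Thus each of the four quantities in \eqref{imp_est} is controlled by $C|\log h|^{1/2}$ times a weighted $\bm L^2$ norm, and it remains to estimate those weighted norms using the machinery already assembled.

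For the discretization errors $\|\bm\Phi_{i,h}-\bm\Phi_i\|_{\bm L^1(\Omega)}$ I would invoke \Cref{weight_err_thm} directly. Since $c$ is uniformly positive definite and bounded, $\|\sigma(\bm\Phi_i-\bm\Phi_{i,h})\|_{\bm L^2(\Omega)}\le C\|\sigma(c\bm\Phi_i-c\bm\Phi_{i,h})\|_{\bm L^2(\Omega)}$, so \eqref{weight_err_1} and \eqref{weight_err_2} yield $\|\sigma(\bm\Phi_1-\bm\Phi_{1,h})\|_{\bm L^2(\Omega)}\le Ch(|\log h|^{1/2}+1)$ and $\|\sigma(\bm\Phi_2-\bm\Phi_{2,h})\|_{\bm L^2(\Omega)}\le C$. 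Multiplying by $\|\sigma^{-1}\|_{L^2(\Omega)}\le C|\log h|^{1/2}$ produces the contributions $Ch(|\log h|+1)$ and $C|\log h|^{1/2}$, which are exactly what \eqref{imp_est_1} and \eqref{imp_est_2} require from the discretization part.

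For the projection errors $\|c\bm\Phi_i-\bm\Pi_{k-1}^o(c\bm\Phi_i)\|_{\bm L^1(\Omega)}$ I would first reduce the degree: since $\bm\Pi_{k-1}^o$ is the $L^2$-orthogonal projection onto a space containing the constants, $\|c\bm\Phi_i-\bm\Pi_{k-1}^o(c\bm\Phi_i)\|_{\bm L^2(K)}\le\|c\bm\Phi_i-\bm\Pi_0^o(c\bm\Phi_i)\|_{\bm L^2(K)}$, and because $\sigma$ is essentially constant on each element by \eqref{pro_sigma_1}, the weighted versions satisfy $\|\sigma(c\bm\Phi_i-\bm\Pi_{k-1}^o(c\bm\Phi_i))\|_{\bm L^2(K)}\le C\|\sigma(c\bm\Phi_i-\bm\Pi_0^o(c\bm\Phi_i))\|_{\bm L^2(K)}$. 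Applying \eqref{L2_prijection_sigma} with the projection onto constants and $\alpha=1$ then bounds the right-hand side by $Ch_K\|\sigma\nabla(c\bm\Phi_i)\|_{\bm L^2(K)}$, so that after summing over $K$, $\|\sigma(c\bm\Phi_i-\bm\Pi_{k-1}^o(c\bm\Phi_i))\|_{\bm L^2(\Omega)}\le Ch\|\sigma\nabla(c\bm\Phi_i)\|_{\bm L^2(\Omega)}$. For $\bm\Phi_1$ one has $c\bm\Phi_1=-\nabla\Psi_1$, whence $\nabla(c\bm\Phi_1)=-D^2\Psi_1$ and \eqref{regu_delta_1} gives $\|\sigma\nabla(c\bm\Phi_1)\|_{\bm L^2(\Omega)}\le C|\log h|^{1/2}$; the weighted norm is then $Ch|\log h|^{1/2}$ and the Cauchy--Schwarz step yields $Ch|\log h|$, completing \eqref{imp_est_1}.

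The one place needing care is $\bm\Phi_2$, where $c\bm\Phi_2=\bm\delta_2-\nabla\Psi_2$ so that $\nabla(c\bm\Phi_2)=\nabla\bm\delta_2-D^2\Psi_2$ contains the derivative of the mollifier, whose $\bm L^2$ norm is as large as $h^{-2}$ by \eqref{delta_function_5}. The saving grace is that $\bm\delta_2$ is supported in the inscribed disk $B$ centred at $\bm x_0$, where $|\bm x-\bm x_0|\le\rho_K\le Ch$ forces $\sigma\le Ch$; hence $\|\sigma\nabla\bm\delta_2\|_{\bm L^2(\Omega)}\le Ch\|\nabla\bm\delta_2\|_{\bm L^2(B)}\le Ch^{-1}$, and combined with $\|\sigma D^2\Psi_2\|_{\bm L^2(\Omega)}\le Ch^{-1}$ from \eqref{regu_delta_2} we obtain $\|\sigma\nabla(c\bm\Phi_2)\|_{\bm L^2(\Omega)}\le Ch^{-1}$. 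The weighted projection error is then $O(1)$ and the Cauchy--Schwarz step gives $C|\log h|^{1/2}$, finishing \eqref{imp_est_2}. I expect this mollifier estimate --- exploiting that the weight $\sigma$ degenerates to size $h$ precisely on the support of $\bm\delta_2$ --- to be the main, though modest, obstacle; everything else is bookkeeping built on \Cref{weight_err_thm}, \Cref{L2_noundness_sigma}, and the Green's-function regularity of \Cref{regularity_Green_function}.
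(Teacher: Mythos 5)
Your argument is correct, and for the first half of each estimate it coincides exactly with the paper: the bound on $\|\bm \Phi_{i,h}-\bm \Phi_i\|_{\bm L^1(\Omega)}$ is obtained, as in the paper, by writing $\int_\Omega \sigma^{-1}(\sigma|\bm\Phi_i-\bm\Phi_{i,h}|)$, applying Cauchy--Schwarz with $\|\sigma^{-1}\|_{L^2(\Omega)}\le C|\log h|^{1/2}$ from \eqref{sigma_lemma_2_constant}, and invoking \Cref{weight_err_thm}. Where you diverge is the projection term. The paper stays in $L^1$ throughout: it applies the unweighted approximation estimate \eqref{Pro_jec_1} with $\rho=s_j=1$ to get $\|c\bm\Phi_1-\bm\Pi_{k-1}(c\bm\Phi_1)\|_{\bm L^1(\Omega)}\le Ch\|D^2\Psi_1\|_{L^1(\Omega)}$ and then uses the $L^1$ Green's-function bound $\|D^2\Psi_1\|_{L^1(\Omega)}\le C|\log h|$ from \eqref{regu_delta_1} (and, implicitly, $\|D^2\Psi_2\|_{L^1(\Omega)}\le Ch^{-1}|\log h|^{1/2}$ together with $\|\nabla\bm\delta_2\|_{L^1(B)}\le Ch^{-1}$ for the second case). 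You instead route this term through the weighted-$L^2$ machinery of \Cref{L2_noundness_sigma} and a second Cauchy--Schwarz against $\sigma^{-1}$, paying $|\log h|^{1/2}$ twice ($\|\sigma D^2\Psi_1\|_{L^2}\le C|\log h|^{1/2}$ times $\|\sigma^{-1}\|_{L^2}\le C|\log h|^{1/2}$) rather than once in $L^1$; the two bookkeepings land on the same $Ch|\log h|$. Your version is slightly longer but has one concrete merit: it makes explicit the treatment of the mollifier contribution $\nabla\bm\delta_2$ in \eqref{imp_est_2} --- using that $\sigma\le Ch$ on $\mathrm{supp}\,\bm\delta_2$ to tame $\|\nabla\bm\delta_2\|_{\bm L^2(B)}\le Ch^{-2}$ --- a detail the paper hides behind ``the proof of \eqref{imp_est_2} is similar.'' The reduction from $\bm\Pi_{k-1}^o$ to $\bm\Pi_0^o$ via best approximation and \eqref{pro_sigma_1} is also a necessary and correctly executed step, since $\Psi_i$ only has $H^2$ regularity. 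No gaps.
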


\begin{proof}
	By the Cauchy-Schwarz  inequality  and \eqref{weight_err_1} we have 
	\begin{align*}
	\| \bm \Phi_1 -  \bm \Phi_{1,h}\|_{\bm L^1(\Omega)} =   \int_{\Omega} \sigma^{-1} (\sigma| \bm \Phi_1 - \bm \Phi_{1,h}|)~{\rm d}\bm x\le  Ch(1+|\log h|).
	\end{align*}
	Next, by  \eqref{Pro_jec_1} we have 
	\begin{align}\label{L1_2}
	\|c\bm \Phi_1 - \bm \Pi_{k-1}(c\bm \Phi_1)\|_{\bm L^1(\Omega)} \le Ch\|\nabla^2\Psi_1\|_{L^1(\Omega)}\le Ch|\log h|.
	\end{align}
	Then, \eqref{imp_est_1} follows. The proof of \eqref{imp_est_2} is similar to the proof of  \eqref{imp_est_1}.
\end{proof}

\paragraph*{Step 2: Proof of  \eqref{maxmimu_norm_u}-\eqref{maxmimu_norm_q}  in \Cref{main_result_Linfty_norm}}																					
\begin{proof}
	We only prove  \eqref{maxmimu_norm_u} since the proof of \eqref{maxmimu_norm_q} is similar. We choose $\delta_1$ so that $\|u_h\|_{ L^\infty(\Omega)} = (\delta_1, u_h)_{\mathcal T_h}$, then 
	\begin{align*}
	-(\delta_1, u_h) & = \mathscr B(\bm \Phi_{1,h}, \Psi_{1,h}, \widehat{\Psi}_{1,h}; \bm q_h, u_h,\widehat u_h)&\textup{by }\eqref{Green_HDG_1}\\
	& =  \mathscr B( \bm q_h, u_h,\widehat u_h; \bm \Phi_{1,h}, \Psi_{1,h}, \widehat{\Psi}_{1,h})&\textup{by }\eqref{commute}\\
	& =  \mathscr B( \bm q, u, u; \bm \Phi_{1,h}, \Psi_{1,h}, \widehat{\Psi}_{1,h})&\textup{by }\eqref{HDG_exact}\\
	& = \mathscr B(\bm \Phi_{1,h}, \Psi_{1,h}, \widehat{\Psi}_{1,h}; \bm q, u, u)&\textup{by }\eqref{commute}\\
	&  =\mathscr B(\bm \Phi_{1,h} - \bm \Phi_{1}, \Psi_{1,h}-\Psi_{1}, \widehat{\Psi}_{1,h} - {\Psi}_{1}; \bm q, u, u) + \mathscr B(\bm \Phi_{1}, \Psi_{1}, {\Psi}_{1}; \bm q, u, u)\\
	&  =\mathscr B(\bm \Phi_{1,h} - \bm \Phi_{1}, \Psi_{1,h}-\Psi_{1}, \widehat{\Psi}_{1,h} - {\Psi}_{1}; \bm q, u, u) - (\delta_1, u)&\textup{by }\eqref{Green_HDG_1}.
	\end{align*}
	By the definition of $ \mathscr B$ in  \eqref{def_B} we have 
	\begin{align*}
	\hspace{1em}&\hspace{-1em} \mathscr B(\bm \Phi_{1,h} - \bm \Phi_{1}, \Psi_{1,h}-\Psi_{1}, \widehat{\Psi}_{1,h} - {\Psi}_{1}; \bm q, u, u)\\
	& = (c(\bm \Phi_{1,h} - \bm \Phi_{1}), \bm q)_{\mathcal T_h}  - ( \Psi_{1,h}-\Psi_{1},\nabla\cdot \bm q)_{\mathcal T_h}  + \langle\widehat{\Psi}_{1,h} - {\Psi}_{1},\bm q\cdot\bm n \rangle_{\partial\mathcal T_h}\\
	&\quad  -(\nabla\cdot (\bm \Phi_{1,h} - \bm \Phi_{1}), u)_{\mathcal T_h} +  \langle(\bm \Phi_{1,h} - \bm \Phi_{1})\cdot \bm n, u\rangle_{\partial\mathcal T_h}\\
	&  = - ( \Psi_{1,h}-\Psi_{1},\nabla\cdot \bm q)_{\mathcal T_h} =  - ( \Psi_{1,h}-\Psi_{1}, f)_{\mathcal T_h},
	\end{align*}
	where we used integration by parts  and the fact that $ \langle\widehat{\Psi}_{1,h},\bm q\cdot\bm n \rangle_{\partial\mathcal T_h} = 0$ and  $ \langle {\Psi}_{1},\bm q\cdot\bm n \rangle_{\partial\mathcal T_h} = 0$ in the last equality. Combining the above two equations,  and using the fact that $\|\delta_1\|_{L^1(\Omega)}=1$  
	together with the estimates in \eqref{L2_error_Phi} gives
	\begin{align*}
	\|u_h\|_{L^\infty(\Omega)} \le    \|u\|_{L^\infty(\Omega)}  + ( \Psi_{1,h}-\Psi_{1}, f)_{\mathcal T_h} \le \|u\|_{L^\infty(\Omega)}   + Ch^{\min\{k,1\}}\|f\|_{L^2(\Omega)}.
	\end{align*}
	This completes the proof of \eqref{maxmimu_norm_u}.
\end{proof}

\paragraph*{Step 3: Proof of \eqref{thmL_infty_q}-\eqref{thmL_infty_u}  in \Cref{main_result_Linfty_norm}}		
We choose $\delta_1$ and $\bm \delta_2$ such that $ \|\Pi_W u - u_h\|_{ L^\infty(\Omega)} = (\delta_1, \Pi_W u - u_h)_{\mathcal T_h}$, $\|\bm \Pi_V \bm q - \bm q_h\|_{\bm L^\infty(\Omega)} = (\bm  \delta_2, \bm \Pi_V \bm q - \bm q_h) _{\mathcal T_h}$.

\begin{lemma}\label{main_result_for_L_infty}
	Let $(\bm \Phi_{1,h}, \Psi_{1,h},\widehat \Psi_{1,h})$ and $(\bm \Phi_{2,h}, \Psi_{2,h},\widehat \Psi_{2,h})$ be the HDG solutions of \eqref{Green_HDG_1} and \eqref{Green_HDG_2}, respectively. Then we have 
	\begin{align*}
	- (\delta_1, \Pi_W u - u_h)_{\mathcal T_h}   =(c(\bm \Phi_{1,h}-\bm \Phi_1), \bm\Pi_V \bm q - \bm q)_{{\mathcal{T}_h}}   +  (c\bm \Phi_1 - \bm \Pi_{k-1} (c\bm \Phi_1), \bm\Pi_V \bm q - \bm q)_{{\mathcal{T}_h}},\\
	(\bm \delta_2, \bm\Pi_V \bm q - \bm q_h)_{\mathcal T_h}    =(c(\bm \Phi_{2,h}-\bm \Phi_2), \bm\Pi_V \bm q - \bm q)_{{\mathcal{T}_h}}   +  (c\bm \Phi_2 - \bm \Pi_{k-1} (c\bm \Phi_2), \bm\Pi_V \bm q - \bm q)_{{\mathcal{T}_h}}.
	\end{align*}
	
\end{lemma}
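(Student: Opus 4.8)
The plan is to run the same template as Step~2, but now testing the Green's function equations against the \emph{projected} errors and then exploiting the symmetry of $\mathscr B$ together with the orthogonality built into the HDG projection. Write $\bm\varepsilon^{\bm q}_h=\bm\Pi_V\bm q-\bm q_h$, $\varepsilon^{u}_h=\Pi_W u-u_h$ and $\varepsilon^{\widehat u}_h=\Pi_k^\partial u-\widehat u_h$; note that $(\bm\varepsilon^{\bm q}_h,\varepsilon^{u}_h,\varepsilon^{\widehat u}_h)\in\bm V_h\times W_h\times\widehat W_h$, since $u=g=0$ on $\partial\Omega$ forces $\Pi_k^\partial u=0$ on boundary faces. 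The one nontrivial ingredient I need is the primal analogue of the projected error equation \eqref{error_equa_Phi}, namely
\begin{align*}
\mathscr B(\bm\varepsilon^{\bm q}_h,\varepsilon^{u}_h,\varepsilon^{\widehat u}_h;\bm v_h,w_h,\widehat w_h)=(c(\bm\Pi_V\bm q-\bm q),\bm v_h)_{\mathcal T_h}
\end{align*}
for all $(\bm v_h,w_h,\widehat w_h)\in\bm V_h\times W_h\times\widehat W_h$. I would obtain this exactly as in \cite{ChenCockburnSinglerZhang1}: subtract \eqref{HDG_full_discrete} from the consistency identity \eqref{HDG_exact}, decompose each error as projection error plus discrete error, and then use \eqref{projection_operator_1}--\eqref{projection_operator_3} (together with \eqref{L2_edge} and the fact that $\tau$ is piecewise constant, so $\tau$ times a face polynomial stays in $\mathcal P^k(F)$) to cancel every contribution of the projection error except the volume term $(c(\bm q-\bm\Pi_V\bm q),\bm v_h)_{\mathcal T_h}$; one integration by parts moves $\nabla\cdot$ onto $w_h$ so that \eqref{projection_operator_1} applies.

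For the first identity I would then compute
\begin{align*}
-(\delta_1,\Pi_W u-u_h)_{\mathcal T_h}
&=\mathscr B(\bm\Phi_{1,h},\Psi_{1,h},\widehat\Psi_{1,h};\bm\varepsilon^{\bm q}_h,\varepsilon^{u}_h,\varepsilon^{\widehat u}_h)\\
&=\mathscr B(\bm\varepsilon^{\bm q}_h,\varepsilon^{u}_h,\varepsilon^{\widehat u}_h;\bm\Phi_{1,h},\Psi_{1,h},\widehat\Psi_{1,h})\\
&=(c(\bm\Pi_V\bm q-\bm q),\bm\Phi_{1,h})_{\mathcal T_h},
\end{align*}
the first equality being \eqref{Green_HDG_1} tested against $(\bm\varepsilon^{\bm q}_h,\varepsilon^{u}_h,\varepsilon^{\widehat u}_h)$ with the sign handled precisely as in Step~2, the second the symmetry \Cref{eq_B}, and the third the error equation above with the discrete Green's function as test function. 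Because $c$ is symmetric this equals $(c\bm\Phi_{1,h},\bm\Pi_V\bm q-\bm q)_{\mathcal T_h}$, and since $\bm\Pi_{k-1}(c\bm\Phi_1)|_K\in[\mathcal P^{k-1}(K)]^d$ while $\bm\Pi_V\bm q-\bm q$ is $L^2(K)$-orthogonal to $[\mathcal P^{k-1}(K)]^d$ by \eqref{projection_operator_1}, I may insert $\bm\Pi_{k-1}(c\bm\Phi_1)$ for free:
\begin{align*}
(c\bm\Phi_{1,h},\bm\Pi_V\bm q-\bm q)_{\mathcal T_h}=(c(\bm\Phi_{1,h}-\bm\Phi_1),\bm\Pi_V\bm q-\bm q)_{\mathcal T_h}+(c\bm\Phi_1-\bm\Pi_{k-1}(c\bm\Phi_1),\bm\Pi_V\bm q-\bm q)_{\mathcal T_h},
\end{align*}
which is the claimed expression. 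The second identity follows from the identical three moves, now starting from \eqref{Green_HDG_2} tested against $(\bm\varepsilon^{\bm q}_h,\varepsilon^{u}_h,\varepsilon^{\widehat u}_h)$, which produces $(\bm\delta_2,\bm\Pi_V\bm q-\bm q_h)_{\mathcal T_h}$ on the left and, after symmetry, the error equation, and the same orthogonality splitting, the corresponding right-hand side with $\bm\Phi_2$ in place of $\bm\Phi_1$.

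I expect the only genuinely delicate point to be the derivation of the primal error equation: one must verify that the numerical-trace and stabilization terms generated by the projection error vanish, which rests squarely on the third projection relation \eqref{projection_operator_3} and on $\tau$ being piecewise constant. Once that equation is in place the remainder is bookkeeping---tracking the sign produced by \eqref{Green_HDG_1} as in Step~2 and invoking the Galerkin orthogonality \eqref{projection_operator_1} to introduce $\bm\Pi_{k-1}(c\bm\Phi_i)$. No regularity or convexity is used in this lemma; those hypotheses enter only afterwards, when these identities are combined with the weighted estimate \Cref{weight_err_thm} and the $L^1$ bounds of \Cref{dual_L11} to control the $L^\infty$ errors.
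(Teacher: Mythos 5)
Your proof is correct and takes essentially the same approach as the paper: test the discrete Green's function equation against the triple of projected errors, use the symmetry of $\mathscr B$ together with the HDG projection identities \eqref{projection_operator_1}--\eqref{projection_operator_3} (and the facewise constancy of $\tau$) to reduce everything to the single volume term $(c\bm\Phi_{1,h},\bm\Pi_V\bm q-\bm q)_{\mathcal T_h}$, and then insert $\bm\Pi_{k-1}(c\bm\Phi_1)$ for free by the orthogonality in \eqref{projection_operator_1}. The only difference is organizational --- you package the cancellations into a primal error equation and then substitute $(\bm\Phi_{1,h},\Psi_{1,h},\widehat\Psi_{1,h})$ as the test function, whereas the paper invokes consistency \eqref{HDG_exact} and a second application of symmetry before carrying out the same cancellations inline --- and your handling of the sign in \eqref{Green_HDG_1} matches the convention the paper actually uses in its proofs.
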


\begin{proof}
	We take $(\bm v_h, w_h, \widehat w_h) = (\bm\Pi_V \bm q - \bm q_h, \Pi_W u - u_h, \Pi_k^\partial  u-\widehat u_h)$ in \eqref{Green_HDG_1} to get
	\begin{align*}
	- (\delta_1, \Pi_W u - u_h)_{\mathcal T_h} 
	& =  \mathscr B(\bm \Phi_h,\Psi_h,\widehat \Psi_h; \bm\Pi_V \bm q - \bm q_h, \Pi_W u - u_h, \Pi_k^\partial  u-\widehat u_h)\\
	& =  \mathscr B( \bm\Pi_V \bm q - \bm q_h, \Pi_W u - u_h, \Pi_k^\partial  u-\widehat u_h; \bm \Phi_{1,h},\Psi_{1,h},\widehat \Psi_{1,h}) & \textup{by } \eqref{commute}\\
	& =  \mathscr B( \bm\Pi_V \bm q - \bm q, \Pi_W u - u, \Pi_k^\partial  u- u; \bm \Phi_{1,h},\Psi_{1,h},\widehat \Psi_{1,h})& \textup{by } \eqref{HDG_exact} \\
	& =  \mathscr B(\bm \Phi_{1,h},\Psi_{1,h},\widehat \Psi_{1,h}; \bm\Pi_V \bm q - \bm q, \Pi_W u - u, \Pi_k^\partial  u- u ) & \textup{by } \eqref{commute}\\
	& =(c\bm \Phi_{1,h}, \bm\Pi_V \bm q - \bm q)_{{\mathcal{T}_h}}- (\Psi_{1,h}, \nabla\cdot (\bm\Pi_V \bm q - \bm q))_{{\mathcal{T}_h}}\\
	&\quad +\langle \widehat \Psi_{1,h}, (\bm\Pi_V \bm q - \bm q)\cdot \bm{n} \rangle_{\partial{{\mathcal{T}_h}}}- (\nabla\cdot\bm \Phi_{1,h},   \Pi_W u - u)_{{\mathcal{T}_h}}\\
	&\quad-\langle  \tau ( \Psi_{1,h} - \widehat \Psi_{1,h}) ,   \Pi_W u - u\rangle_{\partial{{\mathcal{T}_h}}} + \langle \bm \Phi_{1,h}\cdot\bm n, \Pi_k^\partial  u - u  \rangle_{\partial \mathcal T_h},
	\end{align*}
	where we used  the definition of $ \mathscr B$ in the last step. Next, by \eqref{projection_operator_1}-\eqref{projection_operator_3}  we have 
	\begin{align*}
	- (\delta_1, \Pi_W u - u_h)_{\mathcal T_h} & =(c\bm \Phi_{1,h}, \bm\Pi_V \bm q - \bm q)_{{\mathcal{T}_h}}- (\Psi_{1,h}, \nabla\cdot (\bm\Pi_V \bm q - \bm q))_{{\mathcal{T}_h}}\\
	&\quad+\langle \widehat \Psi_{1,h}, (\bm\Pi_V \bm q - \bm q)\cdot \bm{n} \rangle_{\partial{{\mathcal{T}_h}}}-\langle  \tau ( \Psi_{1,h} - \widehat \Psi_{1,h}) ,   \Pi_W u - u\rangle_{\partial{{\mathcal{T}_h}}}\\
	& =(c\bm \Phi_{1,h}, \bm\Pi_V \bm q - \bm q)_{{\mathcal{T}_h}} +  (\nabla\Psi_h, \bm\Pi_V \bm q - \bm q)_{{\mathcal{T}_h}}\\
	&\quad-\langle \Psi_{1,h} - \widehat \Psi_{1,h}, (\bm\Pi_V \bm q - \bm q)\cdot \bm{n} \rangle_{\partial{{\mathcal{T}_h}}}-\langle  \tau ( \Psi_{1,h} - \widehat \Psi_{1,h}) ,   \Pi_W u - u\rangle_{\partial{{\mathcal{T}_h}}} \\
	& = (c\bm \Phi_{1,h}, \bm\Pi_V \bm q - \bm q)_{{\mathcal{T}_h}} \\
	& = (c(\bm \Phi_{1,h}-\bm \Phi), \bm\Pi_V \bm q - \bm q)_{{\mathcal{T}_h}}   +  (c\bm \Phi, \bm\Pi_V \bm q - \bm q)_{{\mathcal{T}_h}} \\
	& = (c(\bm \Phi_{1,h}-\bm \Phi), \bm\Pi_V \bm q - \bm q)_{{\mathcal{T}_h}}   +  (c\bm \Phi - \bm \Pi_{k-1} (c\bm \Phi), \bm\Pi_V \bm q - \bm q)_{{\mathcal{T}_h}}.
	\end{align*}
	This gives the proof of the first identity, we omit the proof of the second identity since it follows along the same lines.
\end{proof}

\begin{lemma}\label{projection_infty_error}
	Let $(\bm q, u)$ and $(\bm q_h, u_h)$ be the solution of \eqref{Poisson} and \eqref{HDG_discrete2}, respectively.  If  assumption (A) holds and $k\geqslant 1$, then we have:
	\begin{subequations}\label{prothmL_infty}
		\begin{align}
		\|\Pi_W u- u_h\|_{L^\infty(\Omega)} &\le Ch^{k+2} (|\log h | +1 )(|\bm q|_{\bm W^{k+1,\infty}(\Omega)} + |u|_{ W^{k+1,\infty}(\Omega)}),\label{prothmL_infty_u}\\
		\|\bm \Pi_V \bm q- \bm q_h\|_{\bm L^\infty(\Omega)} &\le Ch^{k+1}(|\log h |^{1/2} +1 )(|\bm q|_{\bm W^{k+1,\infty}(\Omega)} + |u|_{ W^{k+1,\infty}(\Omega)}).\label{prothmL_infty_q}
		\end{align}
	\end{subequations}
	
\end{lemma}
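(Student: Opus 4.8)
The plan is to exploit the duality identities already assembled in \Cref{main_result_for_L_infty} together with the $L^1$ bounds on the regularized Green's-function errors from \Cref{dual_L11} and the $L^\infty$ projection estimate from \Cref{general_lp_projection_error}. Since $\delta_1$ and $\bm \delta_2$ were chosen so that $\|\Pi_W u - u_h\|_{L^\infty(\Omega)} = (\delta_1, \Pi_W u - u_h)_{\mathcal T_h}$ and $\|\bm\Pi_V \bm q - \bm q_h\|_{\bm L^\infty(\Omega)} = (\bm\delta_2, \bm\Pi_V\bm q - \bm q_h)_{\mathcal T_h}$, the two identities in \Cref{main_result_for_L_infty} already express each target $L^\infty$ norm as a sum of two $L^2(\mathcal T_h)$ inner products. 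The whole proof then reduces to bounding those inner products by the $L^1$--$L^\infty$ Hölder pairing.

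First I would record the single-element consequence of \Cref{general_lp_projection_error} with $\rho = s_j = \infty$ and $d = 2$: the exponent $k+1+\tfrac{d}{s_j}-\tfrac{d}{\rho}$ collapses to $k+1$, so taking the maximum over elements and invoking the quasi-uniformity in assumption (A) yields the global bound
\begin{align*}
\|\bm\Pi_V\bm q - \bm q\|_{\bm L^\infty(\Omega)} \le C h^{k+1}\bigl(|\bm q|_{\bm W^{k+1,\infty}(\Omega)} + |u|_{W^{k+1,\infty}(\Omega)}\bigr).
\end{align*}
This is the single ``good'' factor that appears in \emph{both} inner products on the right of \Cref{main_result_for_L_infty}, so the remaining factors need only be controlled in $L^1$.

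For \eqref{prothmL_infty_u} I would apply Hölder's inequality to each term of the first identity,
\begin{align*}
\|\Pi_W u - u_h\|_{L^\infty(\Omega)} \le \bigl(\|c(\bm\Phi_{1,h} - \bm\Phi_1)\|_{\bm L^1(\Omega)} + \|c\bm\Phi_1 - \bm\Pi_{k-1}(c\bm\Phi_1)\|_{\bm L^1(\Omega)}\bigr)\,\|\bm\Pi_V\bm q - \bm q\|_{\bm L^\infty(\Omega)}.
\end{align*}
Using that $c$ is uniformly bounded, \eqref{imp_est_1} controls the parenthesized $L^1$ factor by $Ch(|\log h|+1)$, while the displayed projection estimate controls the last factor by $Ch^{k+1}(\cdots)$; the product gives the claimed $Ch^{k+2}(|\log h|+1)(\cdots)$. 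Estimate \eqref{prothmL_infty_q} is identical in structure, now using the second identity of \Cref{main_result_for_L_infty} and the $L^1$ bound \eqref{imp_est_2}, which reads $C(|\log h|^{1/2}+1)$; multiplying by $Ch^{k+1}(\cdots)$ produces the asserted $Ch^{k+1}(|\log h|^{1/2}+1)(\cdots)$.

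In this formulation there is essentially no obstacle left: all the analytic difficulty---the weighted energy argument of \Cref{weight_err_thm} and the resulting $L^1$ Green's-function bounds of \Cref{dual_L11}---has been discharged in advance, and the needed $L^\infty$ projection estimate is merely the $\rho=\infty$ specialization of \Cref{general_lp_projection_error}. The only points requiring any care are (i) confirming that $c\in (W^{1,\infty}(\Omega))^{2\times2}$ may be pulled out of the $L^1$ norm, so that the $L^1$ bounds on $\bm\Phi_{i,h}-\bm\Phi_i$ transfer to $c(\bm\Phi_{i,h}-\bm\Phi_i)$, and (ii) keeping track of the two distinct logarithmic powers, $|\log h|$ for the scalar estimate versus $|\log h|^{1/2}$ for the flux, which trace back to the different regularity of the two Green's functions recorded in \Cref{regularity_Green_function}.
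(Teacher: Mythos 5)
Your proposal is correct and follows essentially the same route as the paper: both express the $L^\infty$ norms via the duality identities of \Cref{main_result_for_L_infty}, apply the $L^1$--$L^\infty$ H\"older pairing, and then combine the $L^1$ Green's-function bounds of \Cref{dual_L11} with the $\rho=s_j=\infty$ case of \Cref{general_lp_projection_error} to obtain the rates $h^{k+2}(|\log h|+1)$ and $h^{k+1}(|\log h|^{1/2}+1)$. The two points you flag for care (absorbing the bounded coefficient $c$ into the constant, and tracking the different logarithmic powers coming from $\bm\Phi_1$ versus $\bm\Phi_2$) are handled the same way in the paper.
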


\begin{proof}
	By \Cref{main_result_for_L_infty,dual_L11} we have 
	\begin{align*}
	\| \Pi_W u - u_h\|_{L^\infty(\Omega)}  &=\left|(c(\bm \Phi_{1,h}-\bm \Phi_1), \bm\Pi_V \bm q - \bm q)_{{\mathcal{T}_h}}   +  (c\bm \Phi_1 - \bm \Pi_{k-1} (c\bm \Phi_1), \bm\Pi_V \bm q - \bm q)_{{\mathcal{T}_h}}\right|\\
	&\le C\|\bm\Pi_V \bm q - \bm q\|_{\bm L^\infty(\Omega)} \left(\|\bm \Phi_{1,h}-\bm \Phi_1\|_{\bm L^1(\Omega)} + \|c\bm \Phi_1 - \bm \Pi_{k-1}(c\bm \Phi_1)\|_{\bm L^1(\Omega)}\right)\\
	& \le Ch^{k+2}(|\log h|+1)(|\bm q|_{\bm W^{k+1,\infty}(\Omega)} + |u|_{ W^{k+1,\infty}(\Omega)}).
	\end{align*}
\end{proof}

As a consequence, a simple application of the triangle inequality and \Cref{projection_infty_error,general_lp_projection_error} gives convergence rates for $\|\bm q -\bm q_h\|_{\bm L^\infty(\Omega)}$ and $\|u - u_h\|_{ L^\infty(\Omega)}$. This completes the proof of \eqref{maxmimu_norm_u}-\eqref{maxmimu_norm_q}  in \Cref{main_result_Linfty_norm}.

\paragraph*{Step 4:  Proof of \eqref{thmL_infty_ustar}  in \Cref{main_result_Linfty_norm}}
\begin{proof}
	First, for all $w_{0}\in \mathcal P^{0}(K)$, we have
	\begin{align}\label{error_pro}
	(\Pi_W u-\Pi_{k+1}^o u, w_{0})_K =(\Pi_W u-u, w_{0})_K + (u - \Pi_{k+1}^o u, w_{0})_K= 0.
	\end{align}
	
	Let $e_h=u_{h}^{\star} - u_h+\Pi_W u-\Pi_{k+1}^o u$,  by \eqref{post_process_1} we obtain
	\begin{align*}
	\|\nabla e_h\|_{L^2(K)}^2&=(\nabla (u_{h}^{\star} - u_{h}),\nabla e_{h} )_K+( \nabla (\Pi_W u -\Pi_{k+1}^o u),\nabla e_h)_K\\
	&=(-\nabla u_{h}- \bm{q}_{h},\nabla e_h )_K+(  \nabla (\Pi_W u-\Pi_{k+1}^o u),\nabla e_h)_K \\
	&=(\nabla (\Pi_W u - u_h)_K-(\bm{q}_h-\bm q)  +\nabla (u - \Pi_{k+1}^o u),\nabla e_h)_K.
	\end{align*}
	Using \Cref{Inverse_inequality} this implies that
	\begin{align}\label{H1}
	\|\nabla e_h\|_{L^2(K)} \le C  (h_K^{-1}\|\Pi_W u- u_h\|_{L^2(K)}+\| \bm{q}_h-\bm{q} \|_{L^2(K)} +\|\nabla(u - \Pi_{k+1} u)\|_{L^2(K)}).
	\end{align}
	By \eqref{post_process_1_b} and \eqref{error_pro} we get $(e_h,1)_K=0$, i.e., $\Pi_0^o e_h = 0$. Then standard estimates for the $L^2$ projection given in  \eqref{H1}  shows that
	\begin{align*}
	\|e_h\|_{L^2(K)} & = \|e_h - \Pi_0^o e_h\|_{L^2(K)} \\
	&\le C h_K \|\nabla e_h\|_{L^2(K)} \\
	&\le C (\|\Pi_W u- u_h\|_{L^2(K)}+h_K\| \bm{q}_h-\bm{q} \|_{L^2(K)} +h_K\|\nabla(u - \Pi_{k+1} u)\|_{L^2(K)}).
	\end{align*}
	Hence, we have
	\begin{align*}
	\|\Pi_{k+1}^o u - u_{h}^{\star}\|_{L^2(K)} &\le C  \|\Pi_W u- u_h\|_{L^2(K)} +Ch\|\bm{q}_{h}-\bm \Pi_V \bm q \|_{L^2(K)}\\
	&\quad  +Ch\|\bm \Pi_V \bm q-\bm{q} \|_{L^2(K)} +Ch\|\nabla(u - \Pi_{k+1}^o u)\|_{L^2(K)} .
	\end{align*}
	
	We use the above inequality and \Cref{Inverse_inequality} to get:
	\begin{align*}
	\|\Pi_{k+1}^o u - u_{h}^{\star}\|_{L^\infty(K)} &\le Ch^{-1}\|\Pi_{k+1}^o u - u_{h}^{\star}\|_{L^2(K)}\\
	& \le   C  h^{-1}\|\Pi_W u- u_h\|_{L^2(K)} +C\|\bm{q}_{h}-\bm \Pi_V \bm q \|_{L^2(K)}\\
	&\quad  +C\|\bm \Pi_V \bm q-\bm{q} \|_{L^2(K)} +C\|\nabla(u - \Pi_{k+1}^o u)\|_{L^2(K)}\\
	& \le   C\|\Pi_W u- u_h\|_{L^\infty(K)} +Ch\|\bm{q}_{h}-\bm \Pi_V \bm q \|_{L^\infty(K)}\\
	&\quad  +C\|\bm \Pi_V \bm q-\bm{q} \|_{L^2(K)} +C\|\nabla(u - \Pi_{k+1}^o u)\|_{L^2(K)}.
	\end{align*}

	Now let $K^\star$ denote the element in which $	\|\Pi_{k+1}^o u - u_{h}^{\star}\|_{L^\infty(\Omega)}  =  \|\Pi_{k+1}^o u - u_{h}^{\star}\|_{L^\infty(K^\star)}$. Then
	\begin{align*}
	\|\Pi_{k+1}^o u - u_{h}^{\star}\|_{L^\infty(\Omega)}  &=  \|\Pi_{k+1}^o u - u_{h}^{\star}\|_{L^\infty(K^\star)}\\
	& \le C\left(\|\Pi_W u- u_h\|_{L^\infty(K^\star)} +h\|\bm{q}_{h}-\bm \Pi_V \bm q \|_{L^\infty(K^\star)}\right)\\
	&\quad  +C \left(\|\bm \Pi_V \bm q-\bm{q} \|_{L^2(K^\star)} +\|\nabla(u - \Pi_{k+1}^o u)\|_{L^2(K^\star)}\right)\\
	&\le C\left(\|\Pi_W u- u_h\|_{L^\infty(\Omega)} +h\|\bm{q}_{h}-\bm \Pi_V \bm q \|_{L^\infty(\Omega)}\right)\\
	&\quad  +C \left(\|\bm \Pi_V \bm q-\bm{q} \|_{L^2(K^\star)} +\|\nabla(u - \Pi_{k+1}^o u)\|_{L^2(K^\star)}\right).
	\end{align*}
	By the estimates in \Cref{approximation_of_L2,Inverse_inequality,general_lp_projection_error,projection_infty_error} and the triangle inequality we get our claimed result.
\end{proof}

\section{Quasi-optimal estimates on interfaces}\label{Quasi_optimal_estimates_on_interfaces}
Let $\Gamma$ be a finite union of line segments such that $\Omega$ is decomposed into finitely many Lipschitz domains by $\Gamma$.  We stress that, while $\Omega$ is assumed to be convex, the subdomains need not be convex. Define $\mathcal F_h^\Gamma$ by
\begin{align*}
\mathcal F_h^\Gamma = \{F\in \mathcal F_h\,:\,\textup{measure} (F\cap \Gamma)>0\}.
\end{align*}

We assume, furthermore, that the triangle mesh $\mathcal T_h$ resolves $\Gamma$. Hence,  $\Gamma$ can be written as the union of $\mathcal O(h^{-1})$ edges in $\mathcal F_h$, i.e., $\bar \Gamma = \bigcup_{F\in\mathcal F_h^\Gamma\subset \mathcal F_h} \bar F$.
\begin{theorem}\label{L2_flux_boundary}
	Assume $\Gamma$ has the above properties and let $(\bm q, u)$ and $(\bm q_h, u_h)$ be the solution of \eqref{Poisson} and \eqref{HDG_discrete2}, respectively.  If  assumption (A) holds and $k\geqslant 1$, then we have:
	\begin{subequations}
		\begin{align}
		\| \bm q - \bm q_h \|_{\bm L^2(\Gamma)} &\le Ch^{k+1}(|\log h |^{1/2} +1 )(|\bm q|_{\bm W^{k+1,\infty}(\Omega)} + |u|_{ W^{k+1,\infty}(\Omega)}),\label{L_infty_q}\\
		\| u- u_h \|_{L^2(\Gamma)} &\le Ch^{k+1}(|\log h | +1 )(|\bm q|_{\bm W^{k+1,\infty}(\Omega)} + |u|_{ W^{k+1,\infty}(\Omega)}). \label{L_infty_u}
		\end{align}
		Furthermore,  we have the following error estimate for the postprocessed solution: 
		\begin{align}
		\| u- u_h^\star \|_{L^2(\Gamma)} \le Ch^{k+2}(|\log h | +1 )(|\bm q|_{\bm W^{k+1,\infty}(\Omega)} + |u|_{ W^{k+2,\infty}(\Omega)}), \label{L_infty_ustar}
		\end{align}
		where $u_h^\star $ is defined in \eqref{post_process_1}.
	\end{subequations}
\end{theorem}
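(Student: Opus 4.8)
The plan is to deduce all three interface estimates directly from the global $L^\infty(\Omega)$ bounds already established in \Cref{main_result_Linfty_norm}, exploiting the crucial fact that $\Gamma$ is a fixed finite union of line segments, so its one-dimensional measure $|\Gamma|$ is $\mathcal O(1)$, independent of $h$. The underlying insight is that a routine trace-theorem argument would estimate $\|v\|_{L^2(\Gamma)}$ through a fractional Sobolev norm and thereby forfeit half an order of convergence, whereas controlling the error pointwise allows the passage to $L^2(\Gamma)$ with no loss of rate whatsoever.

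First I would record the elementary bound: for any scalar function $v$ whose restriction to each element $K$ admits a continuous representative on $\bar K$ (applied componentwise for the vector quantity $\bm q - \bm q_h$),
\begin{align}\label{trace_bound}
\|v\|_{L^2(\Gamma)}^2 = \sum_{F \in \mathcal F_h^\Gamma} \|v\|_{L^2(F)}^2 \le \sum_{F \in \mathcal F_h^\Gamma} |F|\, \|v\|_{L^\infty(F)}^2 \le \|v\|_{L^\infty(\Omega)}^2 \sum_{F \in \mathcal F_h^\Gamma} |F| = |\Gamma|\, \|v\|_{L^\infty(\Omega)}^2,
\end{align}
where the last equality uses the standing assumption that the mesh resolves $\Gamma$, so that $\bar\Gamma = \bigcup_{F \in \mathcal F_h^\Gamma} \bar F$ with essentially disjoint faces and hence $\sum_F |F| = |\Gamma|$. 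Taking $v = \bm q - \bm q_h$, then $v = u - u_h$, and finally $v = u - u_h^\star$, and inserting \eqref{thmL_infty_q}, \eqref{thmL_infty_u}, and \eqref{thmL_infty_ustar} respectively, produces exactly the claimed bounds \eqref{L_infty_q}, \eqref{L_infty_u}, and \eqref{L_infty_ustar}, with the logarithmic factors carried over verbatim since no order or power of $|\log h|$ is lost in \eqref{trace_bound}.

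The only point requiring care is the meaning of the trace of the discontinuous approximations on the skeleton $\Gamma$. Because $\bm q_h$, $u_h$, and $u_h^\star$ are piecewise polynomial, on each interior face $F \subset \Gamma$ they carry two (generally distinct) one-sided traces from the adjacent elements $K_1, K_2$; each such trace is bounded in $L^\infty(F)$ by the corresponding elementwise norm $\|v\|_{L^\infty(K_i)}$, which is in turn dominated by $\|v\|_{L^\infty(\Omega)}$. Consequently \eqref{trace_bound} holds regardless of which one-sided trace (or their average) is intended in the statement. I expect this bookkeeping over one-sided traces to be the main, and only mild, subtlety; the substance of the theorem resides entirely in the already-proven pointwise estimates, so once \eqref{trace_bound} is in place the three assertions follow at once.
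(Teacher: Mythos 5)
Your proposal is correct, and it reaches the three estimates by a more elementary route than the paper. The paper does not pass through the final error bounds \eqref{thmL_infty_q}--\eqref{thmL_infty_ustar} at all: it first splits the error as $(\bm q-\bm\Pi_V\bm q)+(\bm\Pi_V\bm q-\bm q_h)$, collects the $\mathcal O(h^{-1})$ elements of the tubular neighborhood $\mathbb S_h$ of $\Gamma$, bounds the projection part face by face using the local $L^\infty$ projection estimates of \Cref{general_lp_projection_error} (giving $h^{2k+3}$ per element), and bounds the discrete part via the scaling $\|\cdot\|_{L^2(\partial K)}^2\le Ch^{-1}\|\cdot\|_{L^2(K)}^2\le Ch\,\|\cdot\|_{L^\infty(K)}^2$ followed by the superconvergent bound on $\|\bm\Pi_V\bm q-\bm q_h\|_{\bm L^\infty(\Omega)}$ from \Cref{projection_infty_error}; summing over $\mathbb S_h$ then cancels the $h^{-1}$ count against the $h$ per element. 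Your argument compresses all of this into the single observation $\|v\|_{L^2(\Gamma)}\le |\Gamma|^{1/2}\|v\|_{L^\infty(\Omega)}$ applied to the full errors, which is legitimate precisely because $|\Gamma|=\mathcal O(1)$ and because \Cref{main_result_Linfty_norm} has already performed the triangle-inequality assembly of projection and discrete parts; the logarithmic factors and the regularity requirements ($W^{k+1,\infty}$ for $u-u_h$ and $\bm q-\bm q_h$, $W^{k+2,\infty}$ for $u-u_h^\star$) carry over verbatim. Neither route gains over the other in the final constants, since both end up with global $W^{k+1,\infty}(\Omega)$ seminorms, so your shortcut loses nothing; and your remark on the one-sided traces of the piecewise polynomials across interior faces of $\Gamma$ is exactly the right point to flag and is handled correctly, since each one-sided trace is dominated by the elementwise, hence global, $L^\infty$ norm.
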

\begin{remark}
	The result proves the observation seen in numerical experiments that the flux on $\Gamma$ converges at an optimal rate. The best theoretical estimates known to us before our paper is $\mathcal O(h^{k+1/2})$.
\end{remark}
\begin{proof}
	We only prove \eqref{L_infty_q} since the proof of \eqref{L_infty_u} and \eqref{L_infty_ustar}  are very similar. We define tubular neighborhoods of $\Gamma$ by 
	\begin{align*}
	\mathbb S_h:= \left\{K\in \mathcal T_h| \Gamma\cap \partial K\neq \emptyset\right\}.
	\end{align*}
	Then the number of the elements in $\mathbb S_h$ is order of $\mathcal O(h^{-1})$.
	\begin{align*}
	\| \bm q - \bm q_h \|_{L^2(\Gamma)}^2 &\le   \|\bm \Pi_V \bm q - \bm q \|_{L^2(\Gamma)}^2 + \|\bm \Pi_V \bm q - \bm q_h \|_{L^2(\Gamma)}^2  \\
	& \le \sum_{K\in \mathbb S_h}	\|\bm \Pi_V\bm q - \bm q\|_{\bm L^2(\partial K)}^2 +\|\bm \Pi_V \bm q - \bm q_h \|_{\bm L^2(\partial K)}^2\\
	& \le C\sum_{K\in \mathbb S_h}  \left( h^{2k+3} |\nabla^{k+1}\bm q|_{\bm L^\infty(\Omega)}^2 +   h^{-1}\|\bm \Pi_V \bm q - \bm q_h \|_{\bm L^2(K)}^2\right)&\textup{by } \eqref{error_pi_q1}\\
	&\le C\sum_{K\in \mathbb S_h}  \left(   h^{2k+3} |\nabla^{k+1}\bm q|_{\bm L^\infty(\Omega)}^2  +   h^{-1} h^2\|\bm \Pi_V \bm q - \bm q_h \|_{\bm L^\infty(K)}^2\right)&\textup{by } \eqref{inverse}\\
	&\le C\sum_{K\in \mathbb S_h}  h^{2k+3} |\nabla^{k+1}\bm q|_{\bm L^\infty(\Omega)}^2 + C  \|\bm \Pi_V \bm q - \bm q_h \|_{\bm L^\infty(\Omega)}^2\sum_{K\in \mathbb S_h}  h^{-1} h^2\\
	&\le C h^{2k+2} |\nabla^{k+1}\bm q|_{\bm L^\infty(\Omega)}^2  + C \|\bm \Pi_V \bm q - \bm q_h \|_{\bm L^\infty(\Omega)}^2\\
	&\le  Ch^{2k+2}(|\log h|^{1/2}+1)^2(|\bm q|_{\bm W^{k+1,\infty}(\Omega)} + |u|_{ W^{k+2,\infty}(\Omega)})^2&\textup{by } \eqref{prothmL_infty_q}.
	\end{align*}
	This completes the proof of \eqref{L_infty_q}.
\end{proof}

\section{Dirichlet Boundary Control Problem}\label{Dirichlet_Boundary_Control_Problem}
In this section, we consider {an elliptic Dirichlet boundary control problem.  Let $u_d\in L^2(\Omega)$ denote a given desired
	state for the solution, and let $\gamma>0$ be a given regularization parameter.  The problem is to solve the following optimization problem:}
\begin{subequations}
	\begin{align}
	\min_{{g\in L^2(\partial\Omega)}} J(g),  \quad  J(g):=\frac{1}{2}\|u-u_{d}\|^2_{L^{2}(\Omega)}+\frac{\gamma}{2}\|g\|^2_{L^{2}(\partial\Omega)}, \label{cost1}
	\end{align}
	where  $ u $ is the solution of the Poisson equation with non-homogeneous Dirichlet boundary conditions
	\begin{align}
	-\Delta u &= f \quad  \text{in} \; \Omega \label{D_p1},\\
	u &= g \quad  \text{on}\;  \partial\Omega. \label{D_p2}
	\end{align}
\end{subequations}
The function $g$ is called the control, and computing the optimal $g$ is the desired result of the above problem.

It is well known that the Dirichlet boundary control problem \eqref{cost1}-\eqref{p2} is equivalent to solving the following optimality system for $(u,z,g)$ given by:
\begin{subequations}\label{boundary_pro}
	\begin{align}
	-\Delta u &=f \quad  \quad  \quad \; \text{in} \; \Omega \label{boundary_pro_a},\\
	u&=g \quad \quad \quad\; \text{on}\;  \partial\Omega, \label{boundary_pro_u}\\
	-\Delta z  &=u-u_d \quad \; \text{in} \; \Omega \label{boundary_pro_b},\\
	z&=0 \quad \quad \quad\;\; \text{on}\;  \partial\Omega, \label{boundary_pro_c}\\
	g& =\gamma^{-1}\partial_{\bm n} z\quad \text{on}\;  \partial\Omega. \label{boundary_pro_d}
	\end{align}
\end{subequations}

Define $ \bm{q} = -\nabla u $ and $ \bm p= -\nabla z $, then the mixed weak form of  \eqref{boundary_pro_a}-\eqref{boundary_pro_d} is to find $(\bm q, u, \bm p, z,g)\in \bm H(\text{div},\Omega)\times L^2(\Omega)\times \bm H(\textup{div},\Omega) \times L^2(\Omega) \times L^2(\partial\Omega)$ such that
\begin{subequations}\label{mixed}
	\begin{align}
	(\bm{q},\bm{v}_1)-(u,\nabla\cdot\bm{r})+\left\langle g, \bm{v}_1\cdot \bm{n}\right\rangle&=0, \label{mixed_a}\\
	(\nabla\cdot \bm{q}, w_1) &= (f,w_1),  \label{mixed_b}\\
	(\bm{p},\bm{v}_2)-(z,\nabla\cdot\bm{v}_2)&=0, \label{mixed_c}\\
	(\nabla\cdot \bm{p}, w_2) -(u,w_2)&= -(u_d,w_2), \label{mixed_d}\\
	\left\langle\gamma g + \bm{p}\cdot \bm{n}, \xi \right\rangle &=0 \label{mixed_e}
	\end{align}
\end{subequations}
for all $(\bm{v}_1,w_1,\bm v_2,w_2,\xi)\in \bm H(\text{div},\Omega)\times L^2(\Omega)\times \bm H(\textup{div},\Omega) \times L^2(\Omega) \times L^2(\partial\Omega)$.

To give the HDG formulation of the above mixed system \eqref{mixed}, we need to introduce the following finite element space for the boundary control $g$:
\begin{align*}
\widehat W_h(\partial) := \{\widehat w_h \in L^2(\mathcal F_h^\partial): \widehat w_h|_{F}\in \mathcal{P}^k(F), \forall F\in \mathcal F_h^\partial\}.
\end{align*}

By the definition of $\mathscr B$ in \eqref{def_B} and setting $c=1$. The HDG formulation of the optimality system \eqref{HDG_discrete2} is to  find $({\bm{q}}_h,{\bm{p}}_h,u_h,z_h,\widehat u_h,\widehat z_h,g_h)\in \bm{V}_h \times\bm{V}_h\times W_h \times W_h\times \widehat W_h\times \widehat W_h\times\widehat W_h(\partial )$  such that
\begin{subequations}\label{HDG_full_discrete2}
	\begin{align}
	\mathscr B (\bm q_h,u_h,\widehat u_h;\bm v_1,w_1,\widehat w_1) &= -\langle g_h, \bm{v_1}\cdot \bm{n} + \tau w_1 \rangle_{{\mathcal F_h^\partial}}-(f, w_1)_{{\mathcal{T}_h}}, \label{HDG_full_discrete_a}\\
	\mathscr B(\bm p_h,z_h,\widehat z_h;\bm v_2,w_2,\widehat w_2) &= -(u_h -u_d,w_2)_{\mathcal T_h},\label{HDG_full_discrete_b}\\
	\gamma^{-1}\langle {\bm{p}}_h\cdot \bm{n} + \tau z_h, \widehat w_3\rangle_{{{\mathcal F_h^\partial}}} &= -\langle g_h, \widehat w_3 \rangle_{{\mathcal F_h^\partial}} \label{HDG_full_discrete_e}
	\end{align}
	for all $\left(\bm{v}_1, \bm{v}_2, w_1, w_2, \widehat w_1, \widehat w_2, \widehat w_3 \right)\in \bm{V}_h \times\bm{V}_h\times W_h \times W_h\times \widehat W_h\times \widehat W_h\times\widehat W_h(\partial ) $.
\end{subequations}

We can now state our main result of this section:
\begin{theorem}\label{main_result_optimal_control}
	Let $(\bm q, u, \bm p, z, g)$ and $({\bm{q}}_h,u_h, {\bm{p}}_h,z_h,g_h)$ be the solution of \eqref{mixed} and \eqref{HDG_full_discrete2}, respectively. If  assumption (A) holds and $k\geqslant 1$, then we have:
	\begin{align*}
	\|g - &g_h\|_{L^2(\partial\Omega)} +	\|u - u_h\|_{L^2(\Omega)} +  \|z - z_h\|_{L^2(\Omega)}    + \|\bm p - \bm p_h\|_{L^2(\Omega)}+ h^{1/2}\|\bm q - \bm q_h\|_{L^2(\Omega)}\\
	&\le Ch^{k+1}(|\log h|+1)(|\bm p|_{\bm W^{k+1,\infty}(\Omega)} + |z|_{ W^{k+1,\infty}(\Omega)}+|\bm q|_{\bm H^{k+1}(\Omega)} + |u|_{ H^{k+1}(\Omega)}).
	\end{align*}	
\end{theorem}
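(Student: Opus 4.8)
The plan is to decouple the genuine discretization error from the control error by introducing two auxiliary HDG solutions driven by the \emph{exact} data. Let $(\bm q_h(g),u_h(g),\widehat u_h(g))\in\bm V_h\times W_h\times\widehat W_h$ solve the state block \eqref{HDG_full_discrete_a} with the exact control $g$ in place of $g_h$, and let $(\bm p_h(u),z_h(u),\widehat z_h(u))$ solve the adjoint block \eqref{HDG_full_discrete_b} with the exact state $u$ in place of $u_h$. Because $g$ and $u$ are the exact data, $(\bm q_h(g),u_h(g))$ is exactly the standard HDG approximation of the state pair $(\bm q,u)$ from \eqref{Poisson}, and $(\bm p_h(u),z_h(u))$ is the standard HDG approximation of the adjoint pair $(\bm p,z)$, which moreover satisfies a \emph{homogeneous} Dirichlet condition $z=0$ on $\partial\Omega$. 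I would then split every error as, e.g., $\bm q-\bm q_h=(\bm q-\bm q_h(g))+(\bm q_h(g)-\bm q_h)$: the first summand is controlled at the optimal rate by \Cref{main_result_Linfty_norm} and \Cref{L2_flux_boundary}, while the second summand $\bm\theta_{\bm q}:=\bm q_h(g)-\bm q_h$ (with companions $\theta_u,\widehat\theta_u$ and, for the adjoint, $\bm\theta_{\bm p},\theta_z,\widehat\theta_z$) is a perturbation that must be bounded in terms of $g-g_h$ and $u-u_h$.

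The core of the proof is an estimate for $\|g-g_h\|_{L^2(\partial\Omega)}$. Subtracting the auxiliary equations from \eqref{HDG_full_discrete2} shows that the state perturbation solves $\mathscr B(\bm\theta_{\bm q},\theta_u,\widehat\theta_u;\bm v_1,w_1,\widehat w_1)=-\langle g-g_h,\bm v_1\cdot\bm n+\tau w_1\rangle_{\mathcal F_h^\partial}$ and the adjoint perturbation solves $\mathscr B(\bm\theta_{\bm p},\theta_z,\widehat\theta_z;\bm v_2,w_2,\widehat w_2)=-(u-u_h,w_2)_{\mathcal T_h}$. Testing the first with $(\bm\theta_{\bm p},\theta_z,\widehat\theta_z)$, the second with $(\bm\theta_{\bm q},\theta_u,\widehat\theta_u)$, and invoking the symmetry of $\mathscr B$ (\Cref{eq_B}) yields the key identity $\langle g-g_h,\bm\theta_{\bm p}\cdot\bm n+\tau\theta_z\rangle_{\mathcal F_h^\partial}=(u-u_h,\theta_u)_{\mathcal T_h}$. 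The discrete optimality condition \eqref{HDG_full_discrete_e} lets me write $\bm\theta_{\bm p}\cdot\bm n+\tau\theta_z=-\gamma(g_h^\star-g_h)$ on $\partial\Omega$, where $g_h^\star:=-\gamma^{-1}\Pi_k^\partial(\bm p_h(u)\cdot\bm n+\tau z_h(u))$, and after writing $u-u_h=(u-u_h(g))+\theta_u$ the identity becomes the coercive relation $\gamma\|g_h^\star-g_h\|_{L^2(\partial\Omega)}^2+\|\theta_u\|_{L^2(\Omega)}^2=-\gamma\langle g-g_h^\star,g_h^\star-g_h\rangle_{\partial\Omega}-(u-u_h(g),\theta_u)_{\mathcal T_h}$. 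A Young's-inequality absorption reduces everything to bounding $\|g-g_h^\star\|_{L^2(\partial\Omega)}$ and $\|u-u_h(g)\|_{L^2(\Omega)}$. The latter is the optimal $L^2$ state estimate; the former, using $g=-\gamma^{-1}\bm p\cdot\bm n$ and $z=0$ on $\partial\Omega$, equals (up to the $L^2$-projection of $g$) the boundary flux-plus-trace error of the auxiliary adjoint, which is precisely what \Cref{L2_flux_boundary} controls at the rate $Ch^{k+1}(|\log h|+1)$. This is the crucial point: the sharp boundary estimate turns the otherwise suboptimal $\mathcal O(h^{k+1/2})$ trace bound into the optimal order.

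With $\|g-g_h\|_{L^2(\partial\Omega)}$ and $\|\theta_u\|_{L^2(\Omega)}$ in hand, I would recover the remaining quantities by the triangle inequality. The adjoint perturbations $\theta_z$ and $\bm\theta_{\bm p}$ are driven by $u-u_h$ through their defining equation, so standard $L^2$ HDG stability gives $\|\theta_z\|_{L^2(\Omega)}+\|\bm\theta_{\bm p}\|_{L^2(\Omega)}\le C\|u-u_h\|_{L^2(\Omega)}$, which is already optimal; combined with the optimal auxiliary errors for $(\bm p,z)$ this yields the stated bounds for $z-z_h$ and $\bm p-\bm p_h$. For the state flux, testing the state-perturbation equation with $(\bm\theta_{\bm q},-\theta_u,-\widehat\theta_u)$ and using \Cref{energy_norm} gives $\|\bm\theta_{\bm q}\|_{L^2(\Omega)}^2\lesssim\|g-g_h\|_{L^2(\partial\Omega)}(\|\bm\theta_{\bm q}\cdot\bm n\|_{L^2(\partial\Omega)}+\|\sqrt{\tau}(\theta_u-\widehat\theta_u)\|_{L^2(\partial\mathcal T_h)})$; since $\widehat u_h=0$ on $\partial\Omega$, a trace--inverse inequality produces $\|\bm\theta_{\bm q}\cdot\bm n\|_{L^2(\partial\Omega)}\lesssim h^{-1/2}\|\bm\theta_{\bm q}\|_{L^2(\Omega)}$, hence $\|\bm\theta_{\bm q}\|_{L^2(\Omega)}\lesssim h^{-1/2}\|g-g_h\|_{L^2(\partial\Omega)}$. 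This single unavoidable $h^{-1/2}$ loss is exactly why $\bm q-\bm q_h$ appears weighted by $h^{1/2}$ in the statement, while all other quantities retain the full $h^{k+1}$ rate.

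The main obstacle is the delicate sign and cancellation bookkeeping in the cross-testing step that produces the coercive identity: one must pair the state and adjoint perturbation equations exactly so that the interior and boundary contributions combine into $\gamma\|g_h^\star-g_h\|^2+\|\theta_u\|^2$ rather than an indefinite form, and this is where the self-adjointness of the optimality system (reflected in \Cref{eq_B}) is essential. The second, more conceptual, difficulty is ensuring that the only place the control error feeds back is through the auxiliary adjoint's boundary flux, so that \Cref{L2_flux_boundary} can be brought to bear; this also explains the asymmetric regularity demands in the statement, namely $W^{k+1,\infty}$ for the adjoint pair $(\bm p,z)$, which must pass through the sharp boundary estimate, versus only $H^{k+1}$ for the state pair $(\bm q,u)$, which needs only classical $L^2$ estimates.
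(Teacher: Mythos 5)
Your proposal is correct and follows essentially the same route as the paper: the same auxiliary HDG problems driven by the exact control and exact state, the same cross-testing of the two perturbation equations via the symmetry of $\mathscr B$ to obtain a coercive identity for the control error (your bookkeeping through $g_h^\star$ versus the paper's direct work with $g-g_h$ is an immaterial difference), the same appeal to \Cref{L2_flux_boundary} for the auxiliary adjoint's boundary flux, and the same energy-plus-inverse-trace argument producing the $h^{-1/2}$ loss for $\bm q$. The only step you compress is the bound $\|\theta_z\|_{L^2(\Omega)}\le C\|u-u_h\|_{L^2(\Omega)}$, which the paper establishes by an explicit Aubin--Nitsche duality argument on the convex domain rather than by citing stability, but the content is the same.
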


\begin{remark}
	Numerical experiments for the Dirichlet boundary control problem given in  \Cref{boundary_pro} always show  optimal order convergence rates if the solution is smooth enough. The first work to prove {this observation} can be found in \cite{May_Rannacher_Vexler_High_SICON_2013} by May, Rannacher and Vexler. The proof is based  on a duality argument and gives estimates for the {control	in weaker norms than $L^2(\partial \Omega)$. However, this technique is not straightforward for the HDG method, see \cite{MR3992054,HuShenSinglerZhangZheng_HDG_Dirichlet_control1,HuMateosSinglerZhangZhang1,HuMateosSinglerZhangZhang2,GongHuMateosSinglerZhang1}. Hence, \Cref{main_result_optimal_control} is the first proof that the HDG method achieves an optimal order convergence rate for the control, state and dual state,} provided we assume the solution of the Dirichlet boundary control problem is smooth enough.
	
\end{remark}

\subsection{Proof of \Cref{main_result_optimal_control}}

We follow the strategy in \cite{HuShenSinglerZhangZheng_HDG_Dirichlet_control1} and introduce an auxiliary problem: find $({\bm{q}}_h(g),{\bm{p}}_h(g),u_h(g),z_h(g),{\widehat{u}}_h(g),{\widehat{z}}_h(g))\in \bm{V}_h \times\bm{V}_h\times W_h \times W_h\times \widehat W_h\times \widehat W_h$ such that
\begin{subequations}\label{HDG_u}
	\begin{align}
	\mathscr B (\bm q_h(g),u_h(g),\widehat u_h(g);\bm v_1,w_1,\widehat w_1) &= -\langle  g, \bm{v_1}\cdot \bm{n}  +  \tau w_1 \rangle_{{\mathcal F_h^\partial}}-(f, w_1)_{{\mathcal{T}_h}}, \label{HDG_u_a}\\
	\mathscr B(\bm p_h(g),z_h(g),\widehat z_h (g);\bm v_2,w_2,\widehat w_2) &= -(u -u_d,w_2)_{\mathcal T_h}\label{HDG_u_b}
	\end{align}
\end{subequations}
for all $\left(\bm{v}_1, \bm{v}_2, w_1, w_2, \widehat w_1, \widehat w_2\right)\in \bm{V}_h \times\bm{V}_h\times W_h \times W_h\times \widehat W_h\times \widehat W_h$, where $g\in L^2(\partial\Omega)$ is the exact optimal control.

The proof now proceeds in three steps as follows.

\paragraph*{Step 1} We first bound the error between the solutions of the auxiliary problem \Cref{HDG_u} and the mixed weak form \eqref{mixed_a}-\eqref{mixed_e} of the optimality system.  The proof of \Cref{lemma_step1_conv_rates} can be found in  \cite[Theorem 4.1 Appendix]{Cockburn_Gopalakrishnan_Sayas_Porjection_MathComp_2010} and  \Cref{L2_flux_boundary}.
\begin{lemma}\label{lemma_step1_conv_rates}
	Let $(\bm q, u, \bm p, z, g)$ and $({\bm{q}}_h(g),{\bm{p}}_h(g),u_h(g),z_h(g),{\widehat{u}}_h(g),{\widehat{z}}_h(g))$ be the solution of \eqref{mixed} and \eqref{HDG_u}, respectively.  If  assumption (A) holds and $k\geqslant 1$, then we have:
	\begin{subequations}
		\begin{align}
		\|\bm q -\bm q_h(g)\|_{ \bm L^2(\Omega)}  &\le  Ch^{k+1}(|\bm q|_{\bm H^{k+1}(\Omega)} + |u|_{ H^{k+1}(\Omega)}),\label{error_au_step1_1}\\
		\|u -u_h(g)\|_{ L^2(\Omega)}  &\le  Ch^{k+1}(|\bm q|_{\bm H^{k+1}(\Omega)} + |u|_{ H^{k+1}(\Omega)}),\label{error_au_step1_2}\\
		\|\bm p -\bm p_h(g)\|_{ \bm L^2(\Omega)}  &\le  Ch^{k+1}(|\bm p|_{\bm H^{k+1}(\Omega)} + |z|_{ H^{k+1}(\Omega)}),\label{error_au_step1_12}\\
		\|z -z_h(g)\|_{ L^2(\Omega)}  &\le  Ch^{k+1}(|\bm p|_{\bm H^{k+1}(\Omega)} + |z|_{ H^{k+1}(\Omega)}),\label{error_au_step1_22}\\
		\norm {\bm p_h(g)-\bm p}_{\bm L^2(\partial\Omega)} & \le Ch^{k+1}(|\log h|^{1/2}+1)(|\bm p|_{\bm W^{k+1,\infty}(\Omega)} + |z|_{ W^{k+1,\infty}(\Omega)}),\label{error_au_step1_3}\\
		\|\tau (z_h(g) -z)\|_{ L^2(\partial\Omega)}& \le Ch^{k+1}(|\log h|+1)(|\bm p|_{\bm W^{k+1,\infty}(\Omega)} + |z|_{ W^{k+1,\infty}(\Omega)}).\label{error_au_step1_4}
		\end{align}
	\end{subequations}
\end{lemma}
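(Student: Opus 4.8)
The plan is to exploit the fact that the auxiliary system \eqref{HDG_u} decouples into two independent, standard HDG discretizations of Poisson-type problems, so that all six bounds reduce to results already in hand. Indeed, \eqref{HDG_u_a} is exactly the scheme \eqref{HDG_discrete2} for the state equation \eqref{boundary_pro_a}--\eqref{boundary_pro_u} with the \emph{exact} control $g$ supplied as Dirichlet datum, while \eqref{HDG_u_b} is the scheme for the adjoint equation \eqref{boundary_pro_b}--\eqref{boundary_pro_c} whose source $u-u_d$ and homogeneous boundary condition are both taken from the \emph{continuous} optimality system. Because the right-hand side of \eqref{HDG_u_b} uses the exact state $u$ rather than a discrete surrogate, each subproblem is a genuine standalone HDG approximation and the pairs $(\bm q_h(g),u_h(g))$ and $(\bm p_h(g),z_h(g))$ may be analysed entirely separately.

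First I would dispatch the four interior estimates \eqref{error_au_step1_1}--\eqref{error_au_step1_22}. For the state pair $(\bm q,u)$ one applies the standard $L^2$ HDG error analysis of Cockburn, Gopalakrishnan and Sayas (\cite[Theorem 3.1, Theorem 4.1]{Cockburn_Gopalakrishnan_Sayas_Porjection_MathComp_2010}), which combines the projection $\Pi_h$ of \Cref{pro_error} with the energy identity of \Cref{energy_norm} to yield the $\mathcal O(h^{k+1})$ rates \eqref{error_au_step1_1}--\eqref{error_au_step1_2} measured by $|\bm q|_{\bm H^{k+1}(\Omega)}+|u|_{H^{k+1}(\Omega)}$; the non-homogeneity of the state datum $g$ is absorbed in the usual way and causes no difficulty. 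The verbatim argument applied to the adjoint pair $(\bm p,z)$ gives \eqref{error_au_step1_12}--\eqref{error_au_step1_22}. These four displays require only $H^{k+1}$ regularity and invoke none of the $L^\infty$ machinery of \Cref{Linftyestimates}.

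The boundary estimates \eqref{error_au_step1_3}--\eqref{error_au_step1_4} are where the sharpened analysis enters, and here I would apply \Cref{L2_flux_boundary} to the adjoint subproblem with the interface taken to be the entire boundary, $\Gamma=\partial\Omega$. Then \eqref{L_infty_q} furnishes \eqref{error_au_step1_3} directly, while \eqref{L_infty_u} together with the uniform boundedness of $\tau$ yields \eqref{error_au_step1_4} after the trivial bound $\|\tau(z_h(g)-z)\|_{L^2(\partial\Omega)}\le\|\tau\|_{L^\infty}\|z-z_h(g)\|_{L^2(\partial\Omega)}$. This is precisely the step that forces the stronger $\bm W^{k+1,\infty}(\Omega)\times W^{k+1,\infty}(\Omega)$ seminorms onto the right-hand sides and produces the logarithmic factors, since \Cref{L2_flux_boundary} is itself proved through the quasi-optimal $L^\infty$ bound \eqref{prothmL_infty_q} of \Cref{projection_infty_error}.

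The main obstacle is not any single inequality but the justification that \Cref{L2_flux_boundary}, stated for an interior interface dividing $\Omega$ into Lipschitz subdomains, applies with $\Gamma=\partial\Omega$. The proof of that theorem uses only (i) the tubular layer containing $\mathcal O(h^{-1})$ elements, (ii) the face approximation estimate \eqref{error_pi_q1}, and (iii) the global $L^\infty$ bound \eqref{prothmL_infty_q}; none of these uses the interiority of $\Gamma$, so the boundary layer of elements meeting $\partial\Omega$ is handled identically, and the extension is immediate once the adjoint data are smooth enough that $(\bm p,z)\in\bm W^{k+1,\infty}(\Omega)\times W^{k+1,\infty}(\Omega)$ --- exactly the regularity carried implicitly in the statement. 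Collecting the six displays then completes the proof.
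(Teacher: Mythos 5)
Your proposal is correct and follows essentially the same route as the paper, which likewise obtains \eqref{error_au_step1_1}--\eqref{error_au_step1_22} from the standard projection-based $L^2$ analysis of Cockburn--Gopalakrishnan--Sayas applied to the two decoupled subproblems, and \eqref{error_au_step1_3}--\eqref{error_au_step1_4} by invoking \Cref{L2_flux_boundary} with $\Gamma=\partial\Omega$. Your explicit justification that the interface theorem extends to the boundary is a welcome detail the paper leaves implicit.
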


\paragraph*{Step 2} 
Next,  we bound the error between the solutions of the auxiliary problem and the HDG problem \eqref{HDG_full_discrete}.  Note that
\begin{subequations}\label{eq_yh}
	\begin{align}
	\mathscr B(\bm q_h(g)-\bm q_h, u_h(g)- u_h,\widehat u_h(g)-\widehat u_h;\bm v_1, w_1,\widehat w_1) &= -\langle  g-g_h, \bm v_1\cdot \bm{n} + \tau w_1 \rangle_{{\mathcal F_h^\partial}}\label{eq_yh_yhu},\\
	\mathscr B(\bm p_h(g)-\bm p_h, z_h(g)- z_h,\widehat z_h(g)-\widehat z_h;\bm v_2, w_2,\widehat w_2) &= -(u-u_h, w_2)_{\mathcal T_h}\label{eq_zh_zhu}
	\end{align}
\end{subequations}
for all $\left(\bm{v}_1, \bm{v}_2, w_1, w_2, \widehat w_1, \widehat w_2\right)\in \bm{V}_h \times\bm{V}_h\times W_h \times W_h\times \widehat W_h\times \widehat W_h$.

\begin{lemma}\label{error_u_gg}
	Let $(u, g)$ and $(u_h, g_h)$ be the solution of \eqref{mixed} and \eqref{HDG_full_discrete2}, respectively.   If  assumption (A) holds and $k\geqslant 1$, then we have:
	\begin{align*}
	\norm{g-g_h}_{L^2(\partial\Omega)}  &\le Ch^{k+1}(|\log h|+1)(|\bm p|_{\bm W^{k+1,\infty}(\Omega)} + |z|_{ W^{k+1,\infty}(\Omega)}+|\bm q|_{\bm H^{k+1}(\Omega)} + |u|_{ H^{k+1}(\Omega)}),\\
	\| u - u_h\|_{L^2(\Omega)}  &\le Ch^{k+1}(|\log h|+1)(|\bm p|_{\bm W^{k+1,\infty}(\Omega)} + |z|_{ W^{k+1,\infty}(\Omega)}+|\bm q|_{\bm H^{k+1}(\Omega)} + |u|_{ H^{k+1}(\Omega)}).
	\end{align*}
\end{lemma}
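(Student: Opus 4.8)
The plan is to exploit the self-adjoint structure of the optimality system through the symmetry of $\mathscr B$ (\Cref{eq_B}), and then to use the two optimality conditions \eqref{mixed_e} and \eqref{HDG_full_discrete_e} to manufacture a coercive term $\gamma\|g-g_h\|_{L^2(\partial\Omega)}^2$ whose remainder is controlled by the Step 1 boundary estimates \eqref{error_au_step1_3}--\eqref{error_au_step1_4}. First I abbreviate the state and adjoint discretization errors by $\bm\varepsilon_{\bm q}=\bm q_h(g)-\bm q_h$, $\varepsilon_u=u_h(g)-u_h$, $\varepsilon_{\widehat u}=\widehat u_h(g)-\widehat u_h$ and $\bm\varepsilon_{\bm p}=\bm p_h(g)-\bm p_h$, $\varepsilon_z=z_h(g)-z_h$, $\varepsilon_{\widehat z}=\widehat z_h(g)-\widehat z_h$. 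Testing \eqref{eq_yh_yhu} with $(\bm\varepsilon_{\bm p},\varepsilon_z,\varepsilon_{\widehat z})\in\bm V_h\times W_h\times\widehat W_h$ and \eqref{eq_zh_zhu} with $(\bm\varepsilon_{\bm q},\varepsilon_u,\varepsilon_{\widehat u})$, the symmetry \eqref{commute} equates the two left-hand sides and yields the key identity
\[
\langle g-g_h,\ \bm\varepsilon_{\bm p}\cdot\bm n+\tau\varepsilon_z\rangle_{\mathcal F_h^\partial}=(u-u_h,\varepsilon_u)_{\mathcal T_h}.
\]

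Next I would rewrite both sides to expose the control error. On the boundary I split $\bm\varepsilon_{\bm p}\cdot\bm n+\tau\varepsilon_z=[\bm p_h(g)\cdot\bm n+\tau z_h(g)]-[\bm p_h\cdot\bm n+\tau z_h]$. Since $\bm p_h\cdot\bm n+\tau z_h$ restricts to an element of $\widehat W_h(\partial)$ on each boundary face, I may replace $g$ by its $L^2(\partial\Omega)$-projection onto $\widehat W_h(\partial)$ and apply \eqref{HDG_full_discrete_e} (tested against that projection and against $g_h$) to obtain $\langle g-g_h,\bm p_h\cdot\bm n+\tau z_h\rangle_{\mathcal F_h^\partial}=-\gamma\langle g_h,g-g_h\rangle_{\partial\Omega}$. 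For the first bracket I use $z=0$ and $\bm p\cdot\bm n=-\gamma g$ on $\partial\Omega$ (from \eqref{boundary_pro_c} and \eqref{mixed_e}) to write $\bm p_h(g)\cdot\bm n+\tau z_h(g)=[\bm p_h(g)\cdot\bm n+\tau z_h(g)-\bm p\cdot\bm n-\tau z]-\gamma g$. Collecting the two algebraic pieces produces exactly $-\gamma\|g-g_h\|_{L^2(\partial\Omega)}^2$, so after writing $u-u_h=(u-u_h(g))+\varepsilon_u$ on the right to send a good term $\|\varepsilon_u\|_{L^2(\Omega)}^2$ to the left, the identity becomes
\[
\gamma\|g-g_h\|_{L^2(\partial\Omega)}^2+\|\varepsilon_u\|_{L^2(\Omega)}^2=\langle g-g_h,\ \bm p_h(g)\cdot\bm n+\tau z_h(g)-\bm p\cdot\bm n-\tau z\rangle_{\mathcal F_h^\partial}-(u-u_h(g),\varepsilon_u)_{\mathcal T_h}.
\]

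Finally I would estimate the right-hand side. The boundary term is bounded by Cauchy--Schwarz and Young's inequality: its second factor is $\le\|\bm p_h(g)-\bm p\|_{\bm L^2(\partial\Omega)}+\|\tau(z_h(g)-z)\|_{L^2(\partial\Omega)}$, which is precisely the quantity controlled at the optimal order $h^{k+1}(|\log h|+1)$ by \eqref{error_au_step1_3}--\eqref{error_au_step1_4}. The volume term obeys $(u-u_h(g),\varepsilon_u)\le\tfrac12\|\varepsilon_u\|_{L^2(\Omega)}^2+\tfrac12\|u-u_h(g)\|_{L^2(\Omega)}^2$, with \eqref{error_au_step1_2} bounding $\|u-u_h(g)\|_{L^2(\Omega)}$. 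Absorbing $\tfrac{\gamma}{2}\|g-g_h\|^2$ and $\tfrac12\|\varepsilon_u\|^2$ into the left gives the stated bound for $\|g-g_h\|_{L^2(\partial\Omega)}$; since $\|\varepsilon_u\|_{L^2(\Omega)}$ is then also controlled, the bound for $\|u-u_h\|_{L^2(\Omega)}$ follows from $\|u-u_h\|\le\|u-u_h(g)\|+\|\varepsilon_u\|$ and \eqref{error_au_step1_2}.

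The main obstacle is the boundary algebra: one must recognize that $\bm p_h\cdot\bm n+\tau z_h$ lies in the discrete control space $\widehat W_h(\partial)$ so that $g$ may be replaced by its projection and \eqref{HDG_full_discrete_e} applied, and then track signs carefully so that the continuous and discrete optimality conditions combine into a single coercive $-\gamma\|g-g_h\|^2$ term rather than leaving uncontrolled cross terms. The reason this yields \emph{optimal} rather than the previously known suboptimal rates is that the leftover boundary factor is measured by the sharp $L^2(\partial\Omega)$ flux and trace estimates \eqref{error_au_step1_3}--\eqref{error_au_step1_4}, which rest on \Cref{L2_flux_boundary}; a naive trace-theorem bound at this point would lose half a power of $h$.
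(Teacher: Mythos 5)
Your proposal is correct and follows essentially the same route as the paper's proof: the same duality identity obtained by cross-testing \eqref{eq_yh_yhu} and \eqref{eq_zh_zhu} with the symmetry of $\mathscr B$, the same use of the continuous and discrete optimality conditions to produce the coercive term $\gamma\|g-g_h\|_{L^2(\partial\Omega)}^2$, and the same appeal to \eqref{error_au_step1_2}--\eqref{error_au_step1_4}. The only (immaterial) difference is that you keep $\|u_h(g)-u_h\|_{L^2(\Omega)}^2$ on the left and finish with a triangle inequality, whereas the paper absorbs $\|u-u_h\|_{L^2(\Omega)}^2$ directly.
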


\begin{proof}
	First, we  take  $(\bm v_1, w_1,\widehat w_1)  = (\bm p_h(g)-\bm p_h, z_h(g)- z_h,\widehat z_h(g)-\widehat z_h)$, $(\bm v_2, w_2,\widehat w_2)  = (\bm q_h(g)-\bm q_h, u_h(g)- u_h,\widehat u_h(g)-\widehat u_h)$ in \eqref{eq_yh},  and use \Cref{eq_B} to get 
	\begin{align*}
	\langle  g-g_h, (\bm p_h(g)-\bm p_h)\cdot \bm{n} +\tau  (z_h(g)- z_h) \rangle_{\mathcal F_h^\partial} =  ( u -u_h, u_h(g)- u_h)_{\mathcal{T}_h}.
	\end{align*}
	Since $g+\gamma^{-1}\bm p \cdot\bm n=0$ on $\mathcal F_h^\partial$ and $g_h+\gamma^{-1}\bm p_h\cdot\bm n +\gamma^{-1}\tau z_h=0$ on $\mathcal F_h^\partial$ we have
	\begin{align*}
	( u -u_h, u_h(g)- u_h)_{\mathcal{T}_h} &= \langle  g-g_h, \bm p_h(g)\cdot \bm{n} +\tau  z_h(g)  + \gamma g_h\rangle_{\mathcal F_h^\partial}\\
	& =  \langle  g-g_h, \bm p_h(g)\cdot \bm{n}  - \bm p\cdot\bm n + \bm p\cdot\bm n+\tau   z_h(g)  + \gamma g_h\rangle_{\mathcal F_h^\partial}\\
	& = \langle  g-g_h, \bm p_h(g)\cdot \bm{n}  - \bm p\cdot\bm n +\tau   z_h(g)  + \gamma g_h - \gamma g\rangle_{\mathcal F_h^\partial}\\
	& = \langle  g-g_h, \bm p_h(g)\cdot \bm{n}  - \bm p\cdot\bm n +\tau   z_h(g) \rangle_{\mathcal F_h^\partial} - \gamma\|g-g_h\|_{L^2(\partial\Omega)}^2.
	\end{align*}
	Since $z=0$ on $\mathcal F_h^\partial$,  we rearrange the above equality and obtain
	\begin{align*}
	\hspace{1em}&\hspace{-1em}  \gamma \norm{g-g_h}_{L^2(\partial\Omega)}^2  +  \|u - u_h\|_{L^2(\Omega)}^2 \\
	&=\langle (\bm p_h(g)-\bm p)\cdot\bm n +\tau z_h(g), g-g_h\rangle_{\mathcal F_h^\partial} -  ( u -u_h,u_h(g) - u )_{\mathcal{T}_h}\\
	&\le  \left(\norm {\bm p_h(g)-\bm p}_{\bm L^2(\partial\Omega)} +\|\tau (z_h(g)-z) \|_{ L^2(\partial\Omega)} \right)\norm{g-g_h}_{L^2(\partial\Omega)} \\
	&\quad + \|u-u_h\|_{L^2(\Omega)} \|u_h(g) - u \|_{L^2(\Omega)} .
	\end{align*}
	Our desired result follows by  Young's inequality,  the triangle inequality and \Cref{lemma_step1_conv_rates}.
\end{proof}

\paragraph*{Step 3}
\begin{lemma}\label{error_u_g}
	Let $(\bm p, z)$ and $(\bm p_h, z_h)$ be the solution of \eqref{mixed} and \eqref{HDG_full_discrete2}, respectively.  If  assumption (A) holds and $k\geqslant 1$, then we have:
	\begin{align*}
	\| \bm p- \bm p_h\|_{L^2(\Omega)}  &\le Ch^{k+1}(|\log h|+1)(|\bm p|_{\bm W^{k+1,\infty}(\Omega)} + |z|_{ W^{k+1,\infty}(\Omega)}+|\bm q|_{\bm H^{k+1}(\Omega)} + |u|_{ H^{k+1}(\Omega)}),\\
	\| z - z_h\|_{L^2(\Omega)}  &\le Ch^{k+1}(|\log h|+1)(|\bm p|_{\bm W^{k+1,\infty}(\Omega)} + |z|_{ W^{k+1,\infty}(\Omega)}+|\bm q|_{\bm H^{k+1}(\Omega)} + |u|_{ H^{k+1}(\Omega)}).
	\end{align*}
\end{lemma}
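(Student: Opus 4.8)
The plan is to bound $\bm p-\bm p_h$ and $z-z_h$ by inserting the auxiliary solution $(\bm p_h(g),z_h(g))$ and using the triangle inequality, so that
\[
\|\bm p-\bm p_h\|_{L^2(\Omega)}\le \|\bm p-\bm p_h(g)\|_{L^2(\Omega)}+\|\bm p_h(g)-\bm p_h\|_{L^2(\Omega)},
\]
and analogously for $z$. The first term on the right is already controlled by \Cref{lemma_step1_conv_rates} (estimates \eqref{error_au_step1_12} and \eqref{error_au_step1_22}), so the entire task reduces to estimating the discrete differences $\bm p_h(g)-\bm p_h$ and $z_h(g)-z_h$.

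The key observation is that the difference relation \eqref{eq_zh_zhu} identifies $(\bm p_h(g)-\bm p_h,\,z_h(g)-z_h,\,\widehat z_h(g)-\widehat z_h)$ as the HDG solution (with $c=1$) of the continuous auxiliary problem
\[
\bm\zeta+\nabla\phi=0,\qquad \nabla\cdot\bm\zeta=u-u_h\ \text{in }\Omega,\qquad \phi=0\ \text{on }\partial\Omega,
\]
whose source $u-u_h$ merely lies in $L^2(\Omega)$. I would first invoke elliptic regularity on the convex domain $\Omega$ to obtain $\|\phi\|_{H^2(\Omega)}+\|\bm\zeta\|_{\bm H^1(\Omega)}\le C\|u-u_h\|_{L^2(\Omega)}$, and then apply the standard HDG $L^2$ error estimates of \cite{Cockburn_Gopalakrishnan_Sayas_Porjection_MathComp_2010} to conclude, using $k\ge 1$, that $\|\bm\zeta-(\bm p_h(g)-\bm p_h)\|_{L^2(\Omega)}+\|\phi-(z_h(g)-z_h)\|_{L^2(\Omega)}\le Ch\|u-u_h\|_{L^2(\Omega)}$. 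A second triangle inequality then absorbs the exact auxiliary solution $(\bm\zeta,\phi)$ into the data, yielding the discrete stability bound $\|\bm p_h(g)-\bm p_h\|_{L^2(\Omega)}+\|z_h(g)-z_h\|_{L^2(\Omega)}\le C\|u-u_h\|_{L^2(\Omega)}$.

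Finally, I would bound $\|u-u_h\|_{L^2(\Omega)}$ by \Cref{error_u_gg}, which already supplies the rate $h^{k+1}(|\log h|+1)$ together with the full combination of seminorms. Substituting this estimate and the \Cref{lemma_step1_conv_rates} bound into the first triangle inequality (and noting that the $\bm H^{k+1}$ seminorms of $\bm p,z$ are dominated by the corresponding $\bm W^{k+1,\infty}$ seminorms up to the measure of $\Omega$) gives the claimed estimate; the $|\log h|$ factor is inherited entirely from the control/state error $\|u-u_h\|_{L^2(\Omega)}$.

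The main obstacle is the middle step: correctly recognizing \eqref{eq_zh_zhu} as the HDG discretization of a genuine continuous elliptic problem and justifying the stability bound $\|\bm p_h(g)-\bm p_h\|_{L^2(\Omega)}+\|z_h(g)-z_h\|_{L^2(\Omega)}\le C\|u-u_h\|_{L^2(\Omega)}$. The subtlety is that although $u-u_h$ carries no smoothness beyond $L^2$, convexity provides $H^2$ regularity of the auxiliary potential, which is exactly what the first-order HDG estimate requires; one must be careful that it is the \emph{discrete} quantities $\bm p_h(g)-\bm p_h$ and $z_h(g)-z_h$, and not the full errors, that we ultimately bound, which is why the extra triangle inequality absorbing $(\bm\zeta,\phi)$ is essential.
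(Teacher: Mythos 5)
Your proposal is correct, and it reaches the crucial intermediate bound $\|\bm p_h(g)-\bm p_h\|_{L^2(\Omega)}+\|z_h(g)-z_h\|_{L^2(\Omega)}\le C\|u-u_h\|_{L^2(\Omega)}$ by a genuinely different (and more modular) route than the paper. The paper works directly on the error equation \eqref{eq_zh_zhu}: it first uses the energy identity of \Cref{energy_norm} to get $\|\bm p_h(g)-\bm p_h\|^2\le -(u-u_h,z_h(g)-z_h)_{\mathcal T_h}$, then runs an Aubin--Nitsche duality argument by hand, introducing a dual problem whose source is $z_h(g)-z_h$ itself, testing \eqref{eq_zh_zhu} with $(\bm\Pi_V\bm\Phi,\Pi_W\Psi,\Pi_k^\partial\Psi)$, and untangling the two resulting coupled inequalities with an auxiliary parameter $\rho$ chosen at the end. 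You instead observe that \eqref{eq_zh_zhu} says the discrete difference is exactly the HDG approximation (with $c=1$, homogeneous Dirichlet data) of the Poisson problem with $L^2$ source $u-u_h$, so the bound follows from convex-domain elliptic regularity, the off-the-shelf $L^2$ a~priori estimates of \cite{Cockburn_Gopalakrishnan_Sayas_Porjection_MathComp_2010} applied with low regularity indices ($\ell_{\bm q}=\ell_u=0$, which is where $k\ge 1$ and the $H^2$/$H^1$ regularity of $(\phi,\bm\zeta)$ are exactly what is needed), and a triangle inequality against the continuous solution. The substance is the same --- the cited scalar estimate is itself proved by the very duality argument the paper writes out --- but your packaging avoids the $\rho$ bookkeeping and makes the stability mechanism transparent. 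The remaining assembly (triangle inequality against $(\bm p_h(g),z_h(g))$ via \Cref{lemma_step1_conv_rates}, then \Cref{error_u_gg} for $\|u-u_h\|_{L^2(\Omega)}$, with the $H^{k+1}$ seminorms absorbed into the $W^{k+1,\infty}$ ones on the bounded domain) matches the paper's.
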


\begin{proof}
	By  \Cref{energy_norm} and letting $ (\bm v_2,w_2,\widehat w_2) = (\bm p_h(g)-\bm p_h, z_h(g)- z_h,\widehat z_h(g)-\widehat z_h)$ in the error equation \eqref{eq_zh_zhu}, we have
	\begin{align}\label{err_ph_phu_inter}
	\begin{split}
	\|\bm p_h(g)-\bm p_h\|^2_{L^2(\Omega)} & \le  - (u-u_h, z_h(g)- z_h)_{\mathcal T_h}\\
	&\le\|u-u_h\|_{\mathcal T_h}\|z_h(g)- z_h\|_{\mathcal T_h}\\
	&\le \frac 1 \rho\|u-u_h\|^2_{\mathcal T_h}+\rho\|z_h(g)- z_h\|_{\mathcal T_h}^2.
	\end{split}
	\end{align}
	Here $\rho$ is a positive constant which will be assigned later. 
	Next, we introduce the dual problem of finding $(\bm \Phi,\Psi)$ such that 
	\begin{equation}\label{Dual_PDE}
	\begin{split}
	c\bm{\Phi}+\nabla\Psi &= 0\qquad\qquad\qquad\qquad\text{in}\ \Omega,\\
	\nabla\cdot\bm \Phi &= z_h(g)- z_h\qquad\quad~~~\text{in}\ \Omega,\\
	\Psi &= 0\qquad\qquad\qquad\qquad\text{on}\ \partial\Omega.
	\end{split}
	\end{equation}
	Since the domain $\Omega$ is convex, we have the following regularity estimate
	\begin{align}\label{reg_e}
	\norm{\bm\Phi}_{\bm H^1(\Omega)} + \norm{\Psi}_{H^2(\Omega)} \le C_{\textup{reg}} \norm{z_h(g)- z_h}_{L^2(\Omega)}.
	\end{align}
	On the one hand, we take  $(\bm v_2,w_2,\widehat w_2) = (\bm\Pi_V\bm{\Phi},\Pi_W\Psi, \Pi_k^\partial\Psi)$ in \eqref{eq_zh_zhu} to get
	\begin{align}\label{one_e}
	\begin{split}
	\mathscr B &(\bm p_h(g)-\bm p_h, z_h(g)- z_h,\widehat z_h(g)-\widehat z_h;\bm\Pi_V\bm{\Phi},\Pi_W\Psi, \Pi_k^\partial\Psi)\\
	&=\mathscr B (\bm\Pi_V\bm{\Phi},\Pi_W\Psi, \Pi_k^\partial\Psi; \bm p_h(g)-\bm p_h, z_h(g)- z_h,\widehat z_h(g)-\widehat z_h)\\
	&=(\bm \Pi_V \bm \Phi - \bm \Phi,\bm p_h(g)-\bm p_h)_{\mathcal{T}_h} + \norm{z_h(g)- z_h}_{L^2(\Omega)}^2.
	\end{split}
	\end{align}
	On the other hand, by the error equation \eqref{eq_zh_zhu}, we have
	\begin{align}\label{two_e}
	&\mathscr B(\bm p_h(g)-\bm p_h, z_h(g)- z_h,\widehat z_h(g)-\widehat z_h;\bm\Pi_V\bm{\Phi},\Pi_W\Psi, \Pi_k^\partial\Psi) = - (u-u_h, \Pi_W\Psi)_{\mathcal T_h}.
	\end{align}
	Comparing the above two equalities \eqref{one_e}, \eqref{two_e} and \eqref{Proerr_q} gives
	\begin{align*}
	\norm{z_h(g)- z_h}_{L^2(\Omega)}^2
	&= - (u-u_h, \Pi_W\Psi)_{\mathcal T_h} -(\bm \Pi_V \bm \Phi - \bm \Phi,\bm p_h(g)-\bm p_h)_{\mathcal{T}_h}\\
	&\le C\| u -u_h\|_{L^2(\Omega)}^2 + \frac 1 4 \norm{z_h(g)- z_h}_{L^2(\Omega)}^2 \\
	&\quad + \frac {h^2}{4} \norm{z_h(g)- z_h}_{L^2(\Omega)}^2 + C\|\bm p_h(g)-\bm p_h\|^2_{L^2(\Omega)}\\
	&\le  C\left(1+\frac{1}{\rho}\right)\| u -u_h\|_{L^2(\Omega)}^2 + \frac 1 2 \norm{z_h(g)- z_h}_{L^2(\Omega)}^2 + C\rho \norm{z_h(g)- z_h}_{L^2(\Omega)}^2.
	\end{align*}
	Taking $\rho = \dfrac{1}{4C}$, then we have 
	\begin{align}\label{error_z_dual}
	\norm{z_h(g)- z_h}_{L^2(\Omega)}\le C \| u -u_h\|_{L^2(\Omega)}.
	\end{align}
	Inserting this inequality  into \eqref{err_ph_phu_inter} gives
	\begin{align}\label{error_p_dual}
	\|\bm p_h(g)-\bm p_h\|_{\bm L^2(\Omega)} \le C \| u -u_h\|_{L^2(\Omega)}.
	\end{align}
	Then our desired result follows by \eqref{error_z_dual}, \eqref{error_p_dual} and \Cref{error_u_g}.
\end{proof}

\begin{lemma}\label{error_q}
	Let $(\bm q, u, \bm p, z, g)$ and $({\bm{q}}_h,u_h, {\bm{p}}_h,z_h,g_h)$ be the solution of \eqref{mixed} and \eqref{HDG_full_discrete2}, respectively.   If  assumption (A) holds and $k\geqslant 1$, then we have:
	\begin{align*}
	\| \bm q - \bm q_h\|_{L^2(\Omega)}  &\le Ch^{k+\frac 1 2}(|\log h|+1)(|\bm p|_{\bm W^{k+1,\infty}(\Omega)} + |z|_{ W^{k+1,\infty}(\Omega)}+|\bm q|_{\bm H^{k+1}(\Omega)} + |u|_{ H^{k+1}(\Omega)}).
	\end{align*}
\end{lemma}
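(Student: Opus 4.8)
The plan is to control $\bm q-\bm q_h$ by inserting the auxiliary solution $\bm q_h(g)$ and writing $\bm q-\bm q_h=(\bm q-\bm q_h(g))+(\bm q_h(g)-\bm q_h)$. The first term is already at the optimal rate by \eqref{error_au_step1_1} of \Cref{lemma_step1_conv_rates}, so the whole difficulty is the fully discrete difference $\bm q_h(g)-\bm q_h$, for which I would run an energy argument.

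To estimate $\bm q_h(g)-\bm q_h$, I would test the error equation \eqref{eq_yh_yhu} with $(\bm v_1,w_1,\widehat w_1)=(\bm q_h(g)-\bm q_h,\,-(u_h(g)-u_h),\,-(\widehat u_h(g)-\widehat u_h))$ and apply the coercivity identity of \Cref{energy_norm}. The left-hand side then becomes $(c(\bm q_h(g)-\bm q_h),\bm q_h(g)-\bm q_h)_{\mathcal T_h}$ plus the nonnegative jump term $\langle\tau(\cdot),\cdot\rangle_{\partial\mathcal T_h}$, while the right-hand side reduces to the single boundary pairing $-\langle g-g_h,\,(\bm q_h(g)-\bm q_h)\cdot\bm n-\tau(u_h(g)-u_h)\rangle_{\mathcal F_h^\partial}$, where I use that $\widehat u_h(g)-\widehat u_h$ vanishes on $\mathcal F_h^\partial$ by definition of $\widehat W_h$. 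This isolates $\|\bm q_h(g)-\bm q_h\|_{\bm L^2(\Omega)}^2$ against a quantity supported only on $\partial\Omega$.

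Next I would bound the boundary pairing by Cauchy--Schwarz on $\partial\Omega$ together with the discrete trace (inverse) inequality $\|\bm v_h\|_{L^2(\partial K)}\le Ch^{-1/2}\|\bm v_h\|_{L^2(K)}$, yielding a factor $Ch^{-1/2}\|g-g_h\|_{L^2(\partial\Omega)}(\|\bm q_h(g)-\bm q_h\|_{\bm L^2(\Omega)}+\|u_h(g)-u_h\|_{L^2(\Omega)})$. Here $\|g-g_h\|_{L^2(\partial\Omega)}$ is at rate $h^{k+1}(|\log h|+1)$ by \Cref{error_u_gg}, and $\|u_h(g)-u_h\|_{L^2(\Omega)}$ is at the same rate by the triangle inequality combined with \Cref{lemma_step1_conv_rates} and \Cref{error_u_gg}. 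Abbreviating $X:=\|\bm q_h(g)-\bm q_h\|_{\bm L^2(\Omega)}$ and letting $E$ denote the common bound $Ch^{k+1}(|\log h|+1)(|\bm p|_{\bm W^{k+1,\infty}(\Omega)}+|z|_{W^{k+1,\infty}(\Omega)}+|\bm q|_{\bm H^{k+1}(\Omega)}+|u|_{H^{k+1}(\Omega)})$, Young's inequality applied to $Ch^{-1/2}EX$ absorbs $\tfrac12 X^2$ and leaves $X^2\le Ch^{-1}E^2$, hence $X\le Ch^{-1/2}E=Ch^{k+1/2}(|\log h|+1)(\cdots)$. Adding back the optimal first term gives the stated bound.

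The main obstacle, and the reason the rate degrades to $h^{k+1/2}$, is exactly this $h^{-1/2}$ loss: the data mismatch $g-g_h$ is supported only on $\partial\Omega$, so passing from the boundary pairing to an interior $\bm L^2(\Omega)$ estimate for $\bm q_h(g)-\bm q_h$ forces the discrete trace inequality and costs one factor $h^{-1/2}$. I do not expect this to be removable within a pure energy argument for the flux $\bm q$, since (unlike $u,z,\bm p$) the flux error is not captured cleanly by a dual problem, which is why $\bm q$ is the only variable in \Cref{main_result_optimal_control} carrying the extra $h^{1/2}$ loss.
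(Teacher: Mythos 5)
Your proposal is correct and follows essentially the same route as the paper: the same splitting through the auxiliary solution $\bm q_h(g)$, the same energy argument obtained by testing \eqref{eq_yh_yhu} with the error itself (you make the sign flip needed for \Cref{energy_norm} explicit, which the paper glosses over), the same discrete trace inequality producing the $h^{-1/2}$ loss, and the same absorption via Young's inequality followed by the triangle inequality with \eqref{error_au_step1_1}. No substantive differences.
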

\begin{proof}
	On the one hand, by the error equation  \eqref{eq_yh_yhu}, we have
	\begin{align*}
	\hspace{1em}&\hspace{-1em}\mathscr B(\bm q_h(g)-\bm q_h, u_h(g)- u_h,\widehat u_h(g)-\widehat u_h;\bm q_h(g)-\bm q_h, u_h(g)- u_h,\widehat u_h(g)-\widehat u_h)\\
	&= -\langle g-g_h,(\bm q_h(g)-\bm q_h)\cdot \bm n-\tau (u_h(g)- u_h)\rangle_{\mathcal F_h}\\
	&\le \norm {g-g_h}_{L^2(\partial\Omega)} (\norm {\bm q_h(g)-\bm q_h}_{L^2(\partial\Omega)} + \norm { u_h(g)- u_h}_{L^2(\partial\Omega)})\\
	&\le Ch^{-1/2}\norm {g-g_h}_{L^2(\partial\Omega)} (\norm {\bm q_h(g)-\bm q_h}_{L^2(\Omega)} + \norm { u_h(g)- u_h}_{L^2(\Omega)}).
	\end{align*}
	On the other hand, by  \Cref{energy_norm}, we obtain
	\begin{align*}
	\hspace{1em}&\hspace{-1em} \mathscr B(\bm q_h(g)-\bm q_h, u_h(g)- u_h,\widehat u_h(g)-\widehat u_h;\bm q_h(g)-\bm q_h, u_h(g)- u_h,\widehat u_h(g)-\widehat u_h)\\
	& =  \|\bm q_h(g)-\bm q_h\|_{{\mathcal{T}_h}}^2 +\|\sqrt{\tau} ((u_h(g)- u_h)-(\widehat u_h(g)-\widehat u_h))\|_{\partial\mathcal T_h}.
	\end{align*}
	Comparing the above two inequalities,  using Young's inequality and \eqref{error_u_gg} gives 
	\begin{align}
	\|\bm q_h(g)-\bm q_h\|_{{\mathcal{T}_h}}\le Ch^{-1/2}\norm {g-g_h}_{L^2(\partial\Omega)}.
	\end{align}
	Then our desired result follows by \Cref{error_u_g}, the triangle inequality and \eqref{error_au_step1_1}.
\end{proof}

\section{Numerical Results}
\label{sec:numerics}

In this section, we present two examples to illustrate our theoretical results.

\begin{example}\label{example_1}
	We  first  test the convergence rate of the $L^\infty$ norm estimate on a convex domain and the $L^2$ norm estimate on the boundary.
	The data is chosen to be
	\begin{gather*}
	\Omega = (0,1)\times (0,1), \quad c = 1,\quad u(x, y) = \sin(10 x).
	\end{gather*}
	The source term $f$ is  chosen to match the exact solution of  \Cref{Poisson} and the approximation errors  are listed in \Cref{table_1} for the $L^\infty(\Omega)$ norm error and \Cref{table_2} for the $L^2(\partial\Omega)$ norm error.  The rates  match the theoretical predictions in \Cref{main_result_Linfty_norm,L2_flux_boundary}.
	
	Our theoretical result needs the domain to be convex, but it is interesting to observe whether the convergence rate can still hold for a non-convex domain. For example, we choose the same data as above except the domain is chosen to be an $L$-shape domain:
	\begin{gather*}
	\Omega = (0,1)\times (0,1) \backslash [1/2,1)\times (0,1/2].
	\end{gather*}
	{In this case the $H^2$ regularity of $\Psi_1$ and $\Psi_2$ in \Cref{regularity_Green_function} does not hold.}
	The approximation errors  are listed in \Cref{table_3} for the $L^\infty(\Omega)$ norm  error (the $L^2(\partial\Omega)$ norm error
	also converges at the quasi-optimal rate: results are not shown). It is obvious that the quasi-optimal convergence rate is still seen for the $L$-shape domain. 

	\begin{table}
		{
			\begin{tabular}{c|c|c|c|c|c|c|c}
				\Xhline{1pt}

				\multirow{2}{*}{Degree}
				&\multirow{2}{*}{$\frac{h}{\sqrt{2}}$}	
				&\multicolumn{2}{c|}{$\|\bm{q}-\bm{q}_h\|_{\bm L^\infty(\Omega)}$}	
				&\multicolumn{2}{c|}{$\|u-u_h\|_{L^\infty(\Omega)}$}	
				&\multicolumn{2}{c}{$\|u-u_h^\star\|_{L^\infty(\Omega)}$}	\\
				\cline{3-8}
				& &Error &Rate
				&Error &Rate
				&Error &Rate
				\\
				\cline{1-8}
				\multirow{5}{*}{ $k=1$}
				&	$2^{-1}$	&	 1.8881E+01	&	-	&	  8.1191E+00	&	-	&	 1.4941E+00	&	-	 \\ 
				&	$2^{-2}$	&	 1.0384E+01	&	0.86	&	  2.9595E+00	&	1.46	&	 4.0248E-01	&	1.89	 \\ 
				&	$2^{-3}$	&	 2.9862E+00	&	1.80	&	  7.7800E-01	&	1.93	&	 5.9420E-02	&	2.76	 \\ 
				&	$2^{-4}$	&	 7.5737E-01	&	1.98	&	  2.0046E-01	&	1.96	&	 7.6187E-03	&	2.96	 \\ 
				&	$2^{-5}$	&	 1.9487E-01	&	1.96	&	  4.9683E-02	&	2.01	&	 9.7985E-04	&	2.96	 \\

				\cline{1-8}
				\multirow{5}{*}{$k=2$}
				&	$2^{-1}$	&	 1.8115E+01	&	-	&	  6.5763E+00	&	-	&	 7.2961E-01	&	-	 \\ 
				&	$2^{-2}$	&	 3.4370E+00	&	2.40	&	  1.0994E+00	&	2.58	&	 6.9452E-02	&	3.39	 \\ 
				&	$2^{-3}$	&	 4.7355E-01	&	2.86	&	  1.4548E-01	&	2.92	&	 4.6990E-03	&	3.89	 \\ 
				&	$2^{-4}$	&	 6.2699E-02	&	2.92	&	  1.7948E-02	&	3.02	&	 3.1054E-04	&	3.92	 \\ 
				&	$2^{-5}$	&	 7.8798E-03	&	2.99	&	  2.2918E-03	&	2.97	&	 1.9522E-05	&	3.99	 \\

				\Xhline{1pt}

			\end{tabular}
		}
		\caption{\Cref{example_1}: $L^\infty(\Omega)$ errors for $\bm{q}_h$, $u_h$ and $u_h^\star$ on the convex domain $(0,1)\times (0,1)$.}\label{table_1}
	\end{table}

	\begin{table}
		{
			\begin{tabular}{c|c|c|c|c|c|c|c}
				\Xhline{1pt}

				\multirow{2}{*}{Degree}
				&\multirow{2}{*}{$\frac{h}{\sqrt{2}}$}	
				&\multicolumn{2}{c|}{$\|\bm{q}-\bm{q}_h\|_{\bm L^2(\partial\Omega)}$}	
				&\multicolumn{2}{c|}{$\|u-u_h\|_{L^2(\partial\Omega)}$}	
				&\multicolumn{2}{c}{$\|u-u_h^\star\|_{L^2(\partial\Omega)}$}	\\
				\cline{3-8}
				& &Error &Rate
				&Error &Rate
				&Error &Rate
				\\
				\cline{1-8}
				\multirow{5}{*}{ $k=1$}
				&	$2^{-1}$	&	 9.3751E+00	&	-	&	  3.9706E+00	&	-	&	 5.4467E-01	&	-	 \\ 
				&	$2^{-2}$	&	 4.1197E+00	&	1.19	&	  1.9143E+00	&	1.05	&	 1.0446E-01	&	2.38	 \\ 
				&	$2^{-3}$	&	 1.1791E+00	&	1.80	&	  6.1659E-01	&	1.63	&	 1.4777E-02	&	2.82	 \\ 
				&	$2^{-4}$	&	 3.0648E-01	&	1.94	&	  1.6398E-01	&	1.91	&	 1.9370E-03	&	2.93	 \\ 
				&	$2^{-5}$	&	 7.7039E-02	&	1.99	&	  4.1472E-02	&	1.98	&	 2.4450E-04	&	2.99	 \\

				\cline{1-8}
				\multirow{5}{*}{$k=2$}
				&	$2^{-1}$	&	 6.4399E+00	&	-	&	  3.4609E+00	&	-	&	 1.9906E-01	&	-	 \\ 
				&	$2^{-2}$	&	 9.3121E-01	&	2.79	&	  5.3204E-01	&	2.70	&	 1.3075E-02	&	3.93	 \\ 
				&	$2^{-3}$	&	 1.1602E-01	&	3.00	&	  5.7436E-02	&	3.21	&	 9.1221E-04	&	3.84	 \\ 
				&	$2^{-4}$	&	 1.4279E-02	&	3.02	&	  6.5665E-03	&	3.13	&	 5.8411E-05	&	3.97	 \\ 
				&	$2^{-5}$	&	 1.7866E-03	&	3.00	&	  8.0200E-04	&	3.03	&	 3.6752E-06	&	3.99	 \\

				\Xhline{1pt}

			\end{tabular}
		}
		\caption{\Cref{example_1}: $L^2(\partial \Omega)$ errors for $\bm{q}_h$, $u_h$ and $u_h^\star$ on the convex  domain $(0,1)\times (0,1)$.}\label{table_2}
	\end{table}

	\begin{table}
		{
			\begin{tabular}{c|c|c|c|c|c|c|c}
				\Xhline{1pt}

				\multirow{2}{*}{Degree}
				&\multirow{2}{*}{$\frac{h}{\sqrt{2}}$}	
				&\multicolumn{2}{c|}{$\|\bm{q}-\bm{q}_h\|_{\bm L^\infty(\Omega)}$}	
				&\multicolumn{2}{c|}{$\|u-u_h\|_{L^\infty(\Omega)}$}	
				&\multicolumn{2}{c}{$\|u-u_h^\star\|_{L^\infty(\Omega)}$}	\\
				\cline{3-8}
				& &Error &Rate
				&Error &Rate
				&Error &Rate
				\\
				\cline{1-8}
				\multirow{5}{*}{ $k=1$}
				&	$2^{-1}$	&	 1.9604E+01	&	-	&	  8.1190E+00	&	-	&	 1.4713E+00	&	-	 \\ 
				&	$2^{-2}$	&	 9.9832E+00	&	0.97	&	  2.9608E+00	&	1.46	&	 3.7109E-01	&	1.99	 \\ 
				&	$2^{-3}$	&	 2.9810E+00	&	1.74	&	  7.7748E-01	&	1.93	&	 5.9410E-02	&	2.64	 \\ 
				&	$2^{-4}$	&	 7.5727E-01	&	1.98	&	  2.0046E-01	&	1.96	&	 7.6187E-03	&	2.96	 \\ 
				&	$2^{-5}$	&	 1.9487E-01	&	1.96	&	  4.9683E-02	&	2.01	&	 9.8015E-04	&	2.96	 \\

				\cline{1-8}
				\multirow{5}{*}{$k=2$}
				&	$2^{-1}$	&	 1.6115E+01	&	-	&	  6.4608E+00	&	-	&	 5.6157E-01	&	-	 \\ 
				&	$2^{-2}$	&	 3.4372E+00	&	2.23	&	  1.0994E+00	&	2.55	&	 6.9454E-02	&	3.02	 \\ 
				&	$2^{-3}$	&	 4.7348E-01	&	2.86	&	  1.4548E-01	&	2.92	&	 4.7007E-03	&	3.89	 \\ 
				&	$2^{-4}$	&	 6.2862E-02	&	2.91	&	  1.7948E-02	&	3.02	&	 3.1283E-04	&	3.91	 \\ 
				&	$2^{-5}$	&	 7.8980E-03	&	2.99	&	  2.2918E-03	&	2.97	&	 1.9653E-05	&	3.99	 \\

				\Xhline{1pt}

			\end{tabular}
		}
		\caption{\Cref{example_1}: $L^\infty(\Omega)$ errors for $\bm{q}_h$, $u_h$ and $u_h^\star$ on the nonconvex L-shaped domain.}\label{table_3}
	\end{table}

	%
	%
	%
	%
	%
	%
	%
	%
	%
	%
	%

\end{example}

\begin{example}\label{example_2}
	Lastly, we  test the convergence rate for a smooth solution to the Dirichlet boundary control problem.
	The data and the exact solution is chosen to be
	\begin{gather*}
	\Omega = (0,1)\times (0,1),\quad \gamma = 1,\quad   u(x, y) = -\pi (\sin(\pi x) + \sin(\pi y)),\quad z(x, y) = \sin(\pi x)\sin(\pi y).
	\end{gather*}
	The source term $f$, the desired state $u_d$  and the control $g$ are  chosen to match the exact solution of  \Cref{boundary_pro} and the approximation errors  are listed in \Cref{table_5} when $k=1$.  {Results (not shown) for $k=2$ also confirm
		the predicted higher order convergence rate in this case.}
	The rates are matched with \Cref{main_result_optimal_control}.
	\begin{table}
		\begin{center}
			\begin{tabular}{cccccc|c}
				\hline
				${h}/{\sqrt 2}$ &1/16& 1/32&1/64 &1/128 & 1/256    &\textup{EO} \\
				\hline
				$\|\bm q - \bm q_h\|_{\bm L^2(\Omega)}$&2.1856E-02 & 6.3683E-03  &1.9677E-03	   &6.3980E-04	 &2.1568E-04\\
				order&- &1.78  &1.69& 1.62&1.57&1.50\\
				\hline
				$\|\bm p - \bm p_h\|_{\bm L^2(\Omega)}$ &6.3866E-03	   &1.5958E-03	   &3.9873E-04   &9.9650E-05	 &2.4911E-05	
				\\
				order&-& 2.00 &2.00  &2.00   &2.00 &2.00 \\
				\hline
				$\|u - u_h\|_{ L^2(\Omega)}$ &8.3560E-03   &2.1051E-03	   &5.2796E-04	   & 1.3218E-04   &3.3073E-05\\
				order&-& 1.99 &2.00  &2.00   &2.00 &2.00 \\
				\hline
				$\| z-  z_h\|_{ L^2(\Omega)}$&3.1536E-03	   & 7.9650E-04   &2.0006E-04	   & 5.0125E-05	   & 1.2545E-05 \\
				order&-& 1.99 &1.99  &2.00   &2.00 &2.00 \\
				\hline
				$\norm{g-g_h}_{L^2(\partial\Omega)}$&7.2110E-03	   & 1.8119E-03   &4.5412E-04   & 1.1367E-04   & 2.8425E-05\\
				order&-& 1.99 &2.00  &2.00   &2.00 &2.00 \\
				\hline
			\end{tabular}
		\end{center}
		\caption{\Cref{example_2}, $k=1$: Errors, observed convergence orders, and expected order (EO) for the control $g$, state $u$, adjoint state $ z$,  and their fluxes $\bm q$ and $\bm p$.}\label{table_5}
	\end{table}
	%
	
\end{example}

\section{Conclusion}
We have proved  quasi-optimal $L^\infty$ norm estimates for the Poisson equation in 2D. Using this result, we obtained quasi-optimal $L^2$ estimates on an interface. Moreover, we  obtained   quasi-optimal convergence rates for  the Dirichlet boundary control of Poisson's  equation, provided the solution is smooth enough.

Our work suggests several interesting directions for further research. First we would like to extend the results
to cover  $L^\infty$ norm estimates in 3D.  In addition the quasi-uniformity assumption on our mesh is restrictive for problems
that require adaptive mesh refinement, including those on non-convex domains. Finally it would be desirable
to prove the optimal convergence rate for the Dirichlet boundary control of PDEs without assuming that the solution is smooth.

\bibliographystyle{siamplain}
\bibliography{Maxwell,Linfity,Dirichlet_Boundary_Control,Mypapers,Added,HDG}

\end{document}